\DeclarePairedDelimiter\floor{\lfloor}{\rfloor} 
\theoremstyle{plain}
\newtheorem{thm}{Theorem}
\newtheorem{theorem}[thm]{Theorem}
\newtheorem{corollary}[thm]{Corollary}
\newtheorem{lemma}[thm]{Lemma}
\newtheorem{prop}[thm]{Proposition}
\newtheorem{defin}[thm]{Definition}
\newtheoremstyle{exm}
{9pt}{9pt}{}{}{\bfseries}{}{.5em}{}
\theoremstyle{exm}
\newtheorem{exm}[thm]{Example}
\newtheoremstyle{rmk}
{9pt}{9pt}{}{}{\bfseries}{}{.5em}{}
\theoremstyle{rmk}
\newtheorem{rmk}[thm]{Remark}
\theoremstyle{alg}
\newtheoremstyle{question}
{9pt}{9pt}{}{}{\bfseries}{}{.5em}{}
\theoremstyle{question}
\newtheorem{question}[thm]{Question}
\numberwithin{equation}{section}
\numberwithin{thm}{section}
\numberwithin{figure}{section}
\newcommand{\cc}{\mathsf{c}}
\newcommand{\ce}{\mathsf{c}_{1}^{S^1}}
\newcommand{\LL}{\mathbb{L}}
\newcommand{\Lat}{\mathcal{L}}
\newcommand{\J}{\mathsf{J}}
\newcommand{\e}[1]{\mathsf{e}^{2 \pi \mathsf{i} {#1}}}
\newcommand{\co}{\mathcal{H}}
\newcommand{\coe}{\mathcal{H}_{S^1}}
\newcommand{\K}{K}
\newcommand{\Ke}{K_{S^1}}
\newcommand{\T}{\mathbb{T}}
\newcommand{\M}{\mathsf{M}}
\newcommand{\R}{\mathbb{R}}
\newcommand{\Z}{{\mathbb{Z}}}
\newcommand{\C}{{\mathbb{C}}}
\newcommand{\Q}{\mathbb{Q}}
\newcommand{\cp}{\cc_1^+}
\newcommand{\cm}{\cc_1^-}
\newcommand{\ac}{(\M,\J,S^1)}
\DeclareMathOperator{\pic}{Pic}
\DeclareMathOperator{\td}{Todd}
\DeclareMathOperator{\tor}{Tor}
\DeclareMathOperator{\ttot}{\mathcal{T}}
\DeclareMathOperator{\ch}{Ch}
\DeclareMathOperator{\ind}{Ind}
\DeclareMathOperator{\k0}{\mathbf{k_0}}
\DeclareMathOperator{\Hi}{H}
\newcommand{\Hil}{\Hi}
\DeclareMathOperator{\U}{U}
\DeclareMathOperator{\Gen}{P}
\newcommand{\purge}[1]{}
\title[On the Chern numbers and the Hilbert polynomial]{On the Chern numbers and the Hilbert polynomial of an almost complex manifold with a circle action}
\author[S. Sabatini]{Silvia Sabatini}
\address{Mathematisches Institut, Universit\"at zu K\"oln, Weyertal 86-90, D-50931 K\"oln, Germany}
\email{sabatini@math.uni-koeln.de}
\subjclass[2010]{57R20, 57S15, 37J10}
\keywords{Chern numbers, Circle actions, Hilbert polynomial, Symplectic actions}
\date{\today}
\begin{document}

\begin{abstract}
Let $(\M,\J)$ be a compact, connected, almost complex manifold of dimension $2n$ endowed with a $\J$-preserving circle action with isolated fixed points.
In this note we analyse the `geography problem' for such manifolds, deriving equations relating the Chern numbers to the index $\k0$ of $(\M,\J)$. We study the symmetries and zeros of the Hilbert polynomial associated to $(\M,\J)$, which imply many rigidity results for the Chern numbers when $\k0\neq 1$. 

We apply these results to the category of compact, connected symplectic manifolds.
A long-standing question posed by McDuff and Salamon \cite{MS}, also known as the `McDuff conjecture', asked about the existence of non-Hamiltonian actions with isolated fixed points.
This question was answered recently by Tolman \cite{T3}, with an explicit construction of a six-dimensional manifold with such an action. 
One issue that this raises is whether one can find topological criteria that ensure the manifold can only support a Hamiltonian or only a non-Hamiltonian action.
 In this vein, we are able to deduce such criteria from our rigidity theorems in terms of relatively few Chern numbers, depending
on the index. This  
improves upon results of Feldman \cite{Fe} for which one needs to know the entire Todd genus. 

Another consequence of our work is that, in the situation above with a Hamiltonian action, the minimal Chern number coincides with the index and is at most $n+1$, mirroring results of Michelsohn \cite{Mi} in the complex category and Hattori \cite{Ha}
in the almost complex category.   
\end{abstract}
\maketitle

\tableofcontents

\section{Introduction}\label{intro}

Let $(\M,\J)$ be a compact, connected, \emph{almost complex} manifold of dimension $2n$ and let $\cc=\sum_{i=1}^n \cc_i$ be the total Chern class of the tangent bundle of $\M$. 
To each partition of $n$ one can associate an integer, called \emph{Chern number}, given by
$\cc_{i_1}\cdots \cc_{i_k}[\M]:=\langle \cc_{i_1}\cdots \cc_{i_k}, \mu_\M\rangle$, where $i_1+\cdots +i_k=n$ and $\mu_\M$ is the orientation homology class of $\M$ (the orientation being induced by the almost complex structure). 
The problem of determining which lists of integers can arise 
as the Chern numbers of a compact almost complex manifold $(\M,\J)$ of a given dimension (also known as the \emph{geography} problem) has been investigated in different settings. Without additional assumptions on $(\M,\J)$, a theorem of Milnor \cite{H} implies that it is necessary and sufficient for these integers to satisfy a certain set of congruences depending on $n$ (the same is true if $\M$ is connected and $n\geq 2$, see \cite{Ge}). 
However, if the manifold is endowed with a $\J$-preserving circle action, further restrictions arise and the geography problem is, in its generality, still open. When the fixed point set is empty (or more generally when all the stabilisers are discrete), as a consequence of the Atiyah--Bott--Berline--Vergne localization formula (hereinafter the ABBV formula, see Thm.\ \ref{abbv formula}) all the Chern numbers must vanish. 
In this note we are interested
in the case in which the fixed point set is non-empty and discrete (for recent results concerning non-isolated fixed points
see \cite{Ku}). 

\emph{Henceforth, the triple $(\M,\J,S^1)$ will denote a compact, connected, almost complex manifold acted on by a circle $S^1$ that preserves $\J$, with nonempty, discrete fixed point set $\M^{S^1}$, and will be referred to as an} {\bf $S^1$-space}. 

The Chern numbers of $S^1$-spaces satisfy more restrictions. 
For instance, as a consequence of the ABBV formula, it is easy to see that 
$\cc_n[\M] = \chi(\M) = |\M^{S^1}|$,
thus implying that $\cc_n[\M]>0$, which is not true in general for any almost complex manifold $(\M,\J)$.
In 1979 Kosniowski \cite{Ko} conjectured that the number of fixed points, and hence $\cc_n[\M]$, grows linearly with $n$; more precisely
he predicted that $\cc_n[\M] \geq \left \lceil{\frac{n}{2}}\right \rceil$. Even if much progress has been done to prove Kosniowski's conjecture (see \cite{Ha}, and more recently \cite{LL,LT,PT,CKP,GPS,J}), a complete answer is still missing. This
 shows that the geography problem for an $S^1$-space $\ac$ is much harder, and
the following questions naturally arise:
\begin{question}\label{conj 1}
What are all the possible values of the Chern numbers of $\ac$? Are there other (combinations of) Chern numbers satisfying (in)equalities depending on $n$?
\end{question}
The first goal of this note is to show that the Chern numbers of $\ac$ satisfy equations that depend on two integers,
the \emph{index} $\k0$ of $(\M,\J)$, and an integer $N_0$ defined by the action, see below. The second is to apply these results to symplectic manifolds supporting symplectic circle actions with discrete fixed point set, 
showing `rigidity' results for the Chern numbers, and deriving topological conditions which ensure the manifold can only support Hamiltonian or only non-Hamiltonian actions, see Section \ref{atsm}.

 Let $\cc_1\in H^2(\M;\Z)$ be the first Chern class of the tangent bundle. The \emph{index} $\k0$ of $(\M,\J)$ is defined to be the largest integer such that, modulo torsion, $\cc_1=\k0\,\eta_0$ for some non-zero element $\eta_0\in H^2(\M;\Z)$. In other words, $\k0=0$ if $\cc_1$ is torsion, and is otherwise the biggest integer
 such that $\cc_1/\k0\in H^2(\M;\Z)$, modulo torsion elements.
When $\M$ is simply connected and symplectic, the index coincides with the \emph{minimal Chern number} (see Remark \ref{mcn}).
Note that $\M$ is simply connected if it is endowed with a Hamiltonian circle action with isolated fixed points, see \cite{Li2}.
The other integer $N_0$ depends on the action, and is defined as 
the \emph{number of fixed points with $0$ negative weights} (see Section \ref{background} \eqref{weights def}).  

When $\k0=0$, namely when $\cc_1$ is a torsion element, all the Chern numbers involving the first Chern class, as well as the Todd genus (see Lemma \ref{c1 N0} (a2)), must vanish.

 In this paper we are interested in
analysing what happens when $\k0>0$, and a careful analysis is carried out when $\k0\geq n-2$. 
When $\cc_1$ is not torsion, the aforementioned equations among the Chern numbers of $\ac$ are derived by analysing the zeros and the symmetries of the 
\emph{Hilbert polynomial} of $(\M,\J)$, which is defined as follows.
Let $\mathbb{L}_0\to \M$ be a line bundle whose first Chern class $\cc_1(\mathbb{L}_0)$ is $\eta_0=\frac{\cc_1}{\k0}$. 
Then the Hilbert polynomial $\Hil(z)$ is the polynomial in $\R[z]$ that, at integer values $k\in \Z$, gives the topological index of the bundle $\LL_0^k$, the $k$-tensor power of
$\LL_0$ (note that $\eta_0$ is only defined up to torsion, however $\Hil(z)$ does not depend on this choice, see Sect.\ \ref{equations chern}).
By the Atiyah-Singer formula, for every $k\in \Z$, the integer $\Hil(k)$ can be expressed in terms of Chern numbers of $\ac$:
\begin{equation}\label{H and c}
\Hil(k)=\left( \sum_{h=0}^n \frac{(k \,\eta_0)^h}{h!}\right)\ttot[\M]
= \left( \sum_{h=0}^n \frac{(k \,\cc_1)^h}{\k0^h\,h!}\right)\left( 1+\frac{\cc_1}{2}+ \frac{\cc_1^2+\cc_2}{12}+\cdots\right)[\M]
\end{equation}
where $\ttot=\sum_{j\geq 0}T_j= 1+\frac{\cc_1}{2}+ \frac{\cc_1^2+\cc_2}{12}+\cdots$ is the total Todd class of $\M$, and $T_j\in H^{2j}(\M;\Z)$ the Todd polynomials of $(\M,\J)$, for $j=0,\ldots,n$, namely the polynomials in the Chern classes of $(\M,\J)$ belonging to the power series $\frac{x}{1-e^{-x}}$. Note that in particular $\Hil(0)=T_n[\M]$, the Todd genus of $\M$, which in turn is equal to $N_0$ (Proposition \ref{properties P} (1)).

Using equivariant extensions of $\LL_0^k$ and localization in equivariant $K$-theory (the Atiyah-Segal formula \eqref{AS formula}),
it is proved that changing the orientation on $S^1$ implies the following
 `\emph{reciprocity law}' for $\Hil(z)$ (Propositions \ref{symmetries} and \ref{properties P} (2)):
\begin{equation}\label{reciprocity}
\Hil(z)=(-1)^n \Hil(-\k0-z)\,.
\end{equation}
This generalises, in the sense described in Sect.\ \ref{connections ehrhart}, a reciprocity law known for the Ehrhart polynomial of a reflexive polytope due to Hibi \cite{Hibi}.

The next theorem is the key result of Section \ref{equations chern}:
\begin{theorem}\label{main theorem}
Let $(\M,\J, S^1)$ be an $S^1$-space.  
Assume that the index $\k0$ of $(\M,\J)$ is greater or equal to $2$. Let $\Hil(z)$ be the associated Hilbert polynomial and $\deg(\Hil)$ its degree.
Then \\
\begin{align}\label{H=0 even}
 & \Hil(-1)=\Hil(-2)=\cdots = \Hil(-\k0+1)=0\,.
 \end{align}
 $\;$\\
Moreover,  if $\Hil(z)\not\equiv 0$, then 
\begin{equation}\label{bound k0}
 \k0\leq \deg(\Hil)+1\leq n+1\,.
\end{equation}
\end{theorem}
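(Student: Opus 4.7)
The plan has two parts. The degree bound \eqref{bound k0} will follow easily from the zeros \eqref{H=0 even}, so the main task is to establish \eqref{H=0 even}.

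First, the degree bound. From \eqref{H and c}, each summand defining $\Hil(k)$ is a monomial in $k$ of degree at most $n$, so $\deg(\Hil)\le n$ as a polynomial. Granting the vanishings \eqref{H=0 even}, if $\Hil\not\equiv 0$ then $\Hil$ has at least the $\k0-1$ distinct integer roots $-1,-2,\dots,-\k0+1$; since a nonzero polynomial of degree $d$ has at most $d$ roots, this gives $\k0-1\le \deg(\Hil)\le n$, i.e.\ $\k0\le \deg(\Hil)+1\le n+1$, which is \eqref{bound k0}.

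For the vanishings \eqref{H=0 even}, I would exploit the same Atiyah-Segal equivariant $K$-theoretic localization formula that underlies the reciprocity \eqref{reciprocity}. Using $\cc_1=\k0\,\cc_1(\LL_0)$ modulo torsion, one fixes an equivariant structure on $\LL_0$ so that at each fixed point $p$, the weight $\mu_p\in\Z$ of $\LL_0$ and the weights $w_{p,j}\in\Z$ of $T_p\M$ satisfy $\sum_j w_{p,j}=\k0\mu_p$; this is achieved by first picking any equivariant lift and then twisting by a global character to absorb the discrepancy in $H^2_{S^1}(\mathrm{pt})$. The Atiyah-Segal formula then yields, for every $k\in\Z$, the identity
\[
\Hil(k) \;=\; \sum_{p\in \M^{S^1}} \frac{t^{k\mu_p}}{\prod_{j=1}^n\bigl(1-t^{-w_{p,j}}\bigr)},
\]
where the right-hand side is a $t$-independent integer. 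To prove $\Hil(-m)=0$ for $1\le m\le \k0-1$, one analyzes the Laurent polynomial in $t$ produced by this sum: the constraint $\sum_j w_{p,j}=\k0\mu_p$, together with the range $0<m<\k0$ controlling the numerator exponents $-m\mu_p$ relative to the denominator degrees, should force the Laurent polynomial itself to be identically zero, and in particular its value at $t=1$ vanishes. Reciprocity \eqref{reciprocity} pairs $-m \leftrightarrow -\k0+m$, so it suffices to verify the vanishing for the lower half of the range, i.e.\ for $1\le m\le \lfloor\k0/2\rfloor$.

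The main obstacle is the character-vanishing argument. Establishing that the full Atiyah-Segal sum yields the zero Laurent polynomial for each $m\in\{1,\dots,\k0-1\}$ demands a delicate cancellation among the fixed-point contributions, in which the constraint $\k0\mu_p=\sum_j w_{p,j}$ must be applied one level deeper than in the reciprocity argument, probably via a careful pole analysis at $t=1$ (and at roots of unity that appear as stabilizers) of each individual term.
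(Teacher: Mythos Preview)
Your treatment of the degree bound \eqref{bound k0} is correct and identical to the paper's.

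For the vanishings \eqref{H=0 even}, your setup via Atiyah--Segal and the normalisation $\sum_j w_{p,j}=\k0\mu_p$ is exactly right, but the proof is not actually there: you flag the ``character-vanishing argument'' as an obstacle and propose pole analysis at $t=1$ and at roots of unity. That is the wrong direction. No cancellation between fixed-point contributions is needed, and $t=1$ plays no role.

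The clean argument (the paper's Theorem~\ref{trick} and Proposition~\ref{eq index zero}) analyses each summand \emph{separately} at $t\to 0$ and $t\to\infty$. At a fixed point $p$ with $\lambda_p$ negative weights, set $\cp(p)=\sum_{w_{p,j}>0}w_{p,j}$ and $\cm(p)=-\sum_{w_{p,j}<0}w_{p,j}$, so $\k0\mu_p=\cp(p)-\cm(p)$. Rewrite
\[
\frac{t^{-m\mu_p}}{\prod_j(1-t^{-w_{p,j}})}
=\frac{(-1)^{n-\lambda_p}\,t^{\,\cp(p)-m\mu_p}}{\prod_j(1-t^{|w_{p,j}|})}.
\]
For $1\le m\le\k0-1$: if $\mu_p>0$ then $m\mu_p<\k0\mu_p\le\cp(p)$; if $\mu_p\le 0$ then $\cp(p)-m\mu_p\ge\cp(p)\ge 0$, with equality forcing $\cp(p)=\cm(p)=0$, impossible since all weights are nonzero. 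Hence the exponent $\cp(p)-m\mu_p$ is strictly positive and the term tends to $0$ as $t\to 0$. A symmetric rewriting shows each term tends to $0$ as $t\to\infty$. Since the full sum is a priori a Laurent polynomial in $t$, finiteness of both limits forces it to be a constant, and that constant is the common limit, namely $0$. This gives $\Hil(-m)=0$ for $m=1,\dots,\k0-1$.

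So the missing idea is simply: look at $t\to 0$ and $t\to\infty$, not at $t=1$; the inequality $0<m<\k0$ bounds $m\mu_p$ strictly between $-\cm(p)$ and $\cp(p)$, which makes each fixed-point term vanish in both limits.
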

Equations \eqref{H and c} and \eqref{H=0 even} suggest that studying the Chern numbers of $\ac$ for large values of $\k0$ is easier.

In Sect.\ \ref{sec: generating fct} it is proved that, as a consequence of \eqref{reciprocity} and \eqref{H=0 even},
\emph{the number of conditions that determine the coefficients of $\Hil(z)$ is the same for $\k0=n+1-2k$ and $\k0=n-2k$, for every $k\in \Z$ such that $0\leq k \leq \frac{n-1}{2}$} (see Remark \ref{num of cds}). 
This follows from the fact that the generating function of $\Hil(z)$ is
 a rational function of the form $\Gen(t)=\mathrm{U}(t)/(1-t)^{\deg(\Hil)+1}$, where $\mathrm{U}(t)$ is a polynomial which ---up to a power of $t$---
 is \emph{self-reciprocal} or \emph{palindromic} (see Proposition \ref{gen fct hilbert} and Corollary \ref{U palindrom}). 

Using the results above we prove that, for $\k0\in \{n,\,n+1\}$, $\Hil(z)$ is completely determined by $N_0$; more precisely we prove that $\Hil(z)=N_0 \Hil_{\overline{M}}(z)$, the manifold $\overline{M}$ being $\C P^n$ for $\k0=n+1$ and the hyperquadric $Q_n$ in $\C P^{n+1}$ for $\k0=n$. This gives equations for the combinations of Chern numbers $\cc_1^h\, T_{n-h}[\M]$ in terms of $n$, for every $h=0,\ldots,n$, and in particular the values of $\cc_1^n[\M]$ and $\cc_1^{n-2}\cc_2[\M]$ (see Propositions \ref{cor n+1} and \ref{cor n}). 
   
When $\k0=n-1$ (and $n\geq 2$) or $\k0=n-2$ (and $n\geq 3$), $\Hil(z)$ and the combinations of Chern numbers $\cc_1^h\,T_{n-h}[\M]$, for $h=0,\ldots,n$, depend on a parameter. We compute explicitly their expressions in terms of this parameter (Propositions \ref{k0=n-1} and \ref{k0=n-2}) and determine a linear equation in $\cc_1^n[\M]$ and $\cc_1^{n-2}\cc_2[\M]$ which depends on $n$ and $N_0$: this is the content of Corollary \ref{relation c122} and Corollary \ref{relation c122 2}. 

Inter alia, we study the position of the roots of $\Hil(z)$ for $\k0\geq n-2$ and $\k0\neq 0$, making
connections with the work of Rodriguez-Villegas \cite{RV} and Golyshev \cite{Go}.
 
Finally, in Section \ref{examples} we investigate how in low dimensions the Chern numbers of $(\M,\J,S^1)$ depend on the integers $N_j$, for $j=0,\ldots,n$, defined as the number
of fixed points with $j$ negative weights. 
For instance, we prove that for $\k0=n$ or $n+1$, and $n\leq 4$, all the Chern numbers of $(\M,\J,S^1)$ can be expressed as linear combinations of  the $N_j$'s,
and when $n=2$ having $\k0=2$ or $3$ implies relations among the $N_j$'s. 

\medskip

\subsection{Applications to symplectic manifolds}\label{atsm} 
In order to apply the results we obtained for almost complex manifolds to symplectic manifolds,
let $\J\colon T\M \to T\M$ be an almost complex structure compatible with $\omega$, namely $\omega(\cdot, \J \cdot)$ is a Riemannian metric. Since the set of such structures
is contractible, we can define complex invariants of $T\M$, namely Chern classes and Chern numbers. 

Let $(\M,\omega)$ be a compact, connected symplectic manifold endowed with a symplectic circle action with isolated fixed points. \emph{Such a space is henceforth denoted by $(\M,\omega,S^1)$}. 
It follows that the $1$-form $\iota_{\xi^\#}\omega$ is closed; here $\xi^\#$ denotes the vector field generated by the circle action. 
If the $1$-form $\iota_{\xi^\#}\omega$ is \emph{exact} the action is said to be \emph{Hamiltonian}, otherwise we call it \emph{non-Hamiltonian}.
In the first case, if $\psi\colon \M\to \R$ is a function satisfying
$
\iota_{\xi^\#}\omega=-d\psi\,,
$
then $\psi$ is called a \emph{moment map} for the $S^1$-action. 

The first consequence of Theorem \ref{main theorem} in the symplectic category follows from the fact that,
if the action is Hamiltonian, $\Hil(z)$ can never be
identically zero (see Remark \ref{H Ham}), and the index coincides with the minimal Chern number (see Remark \ref{mcn}), leading to the following
\begin{corollary}\label{minimal chern ham}
Let $(\M,\omega)$ be a compact, connected symplectic manifold of dimension $2n$.
 If $(\M,\omega)$ supports a Hamiltonian $S^1$-action with isolated fixed points, then its minimal Chern number coincides with the index $\k0$, and the following inequalities hold $$1\leq \k0 \leq n+1.$$
\end{corollary}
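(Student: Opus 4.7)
The plan is to assemble three ingredients already packaged earlier in the paper: (i) that a Hamiltonian $S^1$-action with isolated fixed points forces $\M$ to be simply connected, so by Remark \ref{mcn} the index $\k0$ coincides with the minimal Chern number, (ii) that under the Hamiltonian hypothesis the Hilbert polynomial $\Hil(z)$ is never identically zero (Remark \ref{H Ham}), and (iii) the degree bound $\k0\leq \deg(\Hil)+1\leq n+1$ from Theorem \ref{main theorem}. Once the index is identified with the minimal Chern number, the problem reduces to proving the two-sided inequality $1\leq \k0\leq n+1$.

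For the upper bound, I would split on whether $\k0\leq 1$ or $\k0\geq 2$. In the former case there is nothing to prove. In the latter case, since $\Hil(z)\not\equiv 0$ by (ii), the second assertion of Theorem \ref{main theorem} applies directly to give $\k0\leq n+1$.

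For the lower bound, I would argue by contradiction. Suppose $\k0=0$, i.e.\ $\cc_1$ is a torsion class. Then by Lemma \ref{c1 N0}(a2) (invoked in the introduction right after the definition of the index) every Chern number involving $\cc_1$ vanishes, and in particular the Todd genus $T_n[\M]$ is zero. However, by Proposition \ref{properties P}(1) the Todd genus equals $N_0$, the number of fixed points with zero negative weights. For a Hamiltonian action the moment map $\psi$ is a Morse function whose global minimum is attained at (at least) one fixed point, and at that fixed point the Hessian of $\psi$ is positive definite, so \emph{all} the weights at that point are positive. Hence $N_0\geq 1$, contradicting $N_0=T_n[\M]=0$. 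Therefore $\k0\geq 1$.

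No step here is really an obstacle once the preceding machinery is in place; the only mild subtlety is making sure to cite the correct existing result for simply-connectedness (Li's theorem, \cite{Li2}) and for the identification of $N_0$ with the minimum stratum of the moment map. Combining the three steps yields both the equality minimal Chern number $=\k0$ and the inequalities $1\leq \k0\leq n+1$, which is the full content of the corollary.
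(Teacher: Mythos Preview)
Your argument is correct and follows essentially the same route as the paper, which simply cites Corollary~\ref{bound on k0 s}({\bf i'}) together with Remark~\ref{mcn}; you have unpacked the proof of that corollary into its constituent ingredients (Lemma~\ref{Lemma:c1 not torsion}, Lemma~\ref{N0 1}, Theorem~\ref{main theorem}).

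Two small corrections in the lower-bound step. First, the vanishing of the Todd genus does \emph{not} follow from ``every Chern number involving $\cc_1$ vanishes'': the Todd polynomial $T_n$ contains terms with no $\cc_1$ factor (e.g.\ $\cc_2$ for $n=2$). Lemma~\ref{c1 N0}(a2) gives $\td(\M)=0$ and $N_0=0$ directly, so just cite that. Second, you invoke Proposition~\ref{properties P}(1) for the identity $\td(\M)=N_0$, but that proposition carries the standing hypothesis that $\cc_1$ is not torsion---precisely what you are assuming fails. The identity $\td(\M)=N_0$ holds in general by Corollary~\ref{todd genus comp}; or, more simply, you can bypass this step altogether since Lemma~\ref{c1 N0}(a2) already yields $N_0=0$, which you then contradict via the moment-map argument (i.e.\ Lemma~\ref{N0 1}).
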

 This result can be considered the analogue in the Hamiltonian category of a theorem of Michelsohn \cite[Cor.\ 7.17]{Mi}, which asserts that the index of a compact complex manifold admitting
 a K\"ahler metric with positive Ricci curvature is at most $n+1$. The same conclusion also holds if $(\M,\J)$ is a compact almost complex
 manifold which can be endowed with a quasi-ample line bundle; this result is due to Hattori \cite{Ha} and is discussed in Remark \ref{hattori rmk}.
 If a compact symplectic manifold can be endowed with a non-Hamiltonian circle action with isolated fixed points, then
 there are three possibilities for the index and the Hilbert polynomial (see Corollary \ref{bound on k0 s} and Remark \ref{rmk 1}).
 
There are plenty of examples of compact symplectic manifolds that can be
endowed with a Hamiltonian circle action with isolated fixed points. Until not so long ago, it was indeed believed that
every symplectic circle action with isolated fixed points would be
Hamiltonian.
This is sometimes also known as the `McDuff conjecture', and holds\footnote{For $n=1$, the only compact symplectic surface that can be endowed
with a symplectic circle action with isolated fixed points is the sphere, which is simply connected, hence the action is Hamiltonian. 
For $n=2$ the same conclusion holds by a result of McDuff in \cite{MD1}.
In the same paper the author also proves the existence of a six-dimensional compact symplectic manifold with a non-Hamiltonian action, but the fixed point set is not discrete.}
 for $n=1$ and $2$ \cite{MD1}, as well as in many other particular cases (see for instance \cite{Fe,Fr,Go1,Go2,L,Ono,TW,J}).
It is only very recently that Tolman announced the following striking result:
\begin{thm}[Tolman '15 \cite{T3}]\label{tolman 6}
There exists a non-Hamiltonian symplectic circle action with exactly 32 fixed points on a closed, connected, six-dimensional symplectic manifold $(\widetilde{M},\omega)$.
\end{thm}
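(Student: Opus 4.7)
The goal is to exhibit a closed, connected symplectic six-manifold $(\widetilde{M},\omega)$ supporting a symplectic circle action with exactly $32$ isolated fixed points such that the closed $1$-form $\iota_{\xi^\#}\omega$ is not exact. Unlike the rigidity results developed earlier in the paper, which constrain invariants of a given $S^1$-space, this is a construction problem: one must build $\widetilde{M}$, the symplectic form and the action by hand, and then verify the three features (closedness/connectedness/symplecticity, non-Hamiltonianity, and $32$ isolated fixed points) separately.

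My plan is to begin from a Hamiltonian model $(Y,\omega_Y)$ with a Hamiltonian torus action (e.g.\ a toric or partial-flag type space) whose moment-polytope data makes the fixed locus, the weights at the fixed points, and the equivariant cohomology transparent. The key idea is then to \emph{twist} one of the torus directions so that, after a suitable operation, the remaining circle action picks up a nontrivial closed-but-not-exact moment $1$-form. The standard device is either an equivariant symplectic cut along a generic level set of a subtorus, a symplectic reduction by a free subtorus, or a combination of equivariant blow-ups and blow-downs of invariant symplectic submanifolds whose normal-bundle data can be controlled. The construction is engineered so that $b_1(\widetilde{M})>0$; this nontrivial first Betti number is the topological price of admitting a non-Hamiltonian symplectic circle action.

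The second step is a fixed-point count. Using the symplectic slice theorem at every fixed point of the ambient Hamiltonian model, one tracks which fixed orbits survive the cut/reduction, which merge, and which exceptional points are created by the surgery. The weights at each fixed point determine whether isolatedness is preserved (each weight must remain nonzero for the residual $S^1$). The number $32$ is then produced by careful bookkeeping — essentially an Euler-characteristic/representation-theoretic sum over the pieces of the construction — and the parameters of the construction are tuned so that the total equals $32$ with every fixed point isolated. The third step is the verification of non-Hamiltonianity: since $\iota_{\xi^\#}\omega$ is automatically closed, it suffices to pair it with an explicit $1$-cycle coming from the new $H^1$ class introduced by the reduction/cut, which can be computed by integrating $\omega$ over an invariant cylinder swept out by $S^1$-orbits along a representative loop.

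The hard part, in my view, is the simultaneous calibration of these features. Increasing $b_1$ via equivariant surgery tends either to enlarge the fixed locus (destroying isolatedness) or to degenerate the symplectic form (destroying nondegeneracy after the cut/blow-up), and one must arrange the local weights at every fixed point of the model so that the generating vector field continues to vanish transversally. Achieving all of this while still producing exactly $32$ fixed points — a number forced by the combinatorics of the construction rather than chosen freely — is where the main technical effort of Tolman's argument lies, and reproducing it would be the central obstacle in any independent proof.
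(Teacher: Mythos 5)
This statement is not proved in the paper at all: it is quoted verbatim from Tolman \cite{T3} as an external input, so there is no ``paper's own proof'' to compare against. What matters, then, is whether your proposal stands on its own as a proof, and it does not. You never exhibit a manifold. Every load-bearing object in your outline is left unspecified: the Hamiltonian model $(Y,\omega_Y)$, the subtorus or hypersurface along which one cuts or reduces, the invariant submanifolds to be blown up or down, the weights at the fixed points of the model, and the ``parameters of the construction'' that are to be ``tuned'' so that the count comes out to $32$. A construction theorem is proved by producing the construction; a description of the genre of construction one would attempt, together with an acknowledgement that ``reproducing it would be the central obstacle,'' is a research plan, not a proof. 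In particular the number $32$ never emerges from anything in your text --- it is simply asserted that bookkeeping will produce it --- and nothing you write rules out the failure modes you yourself identify (fixed loci becoming non-isolated, the form degenerating after surgery).

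Two smaller points. First, your stated necessary condition $b_1(\widetilde{M})>0$ is correct (a closed non-exact $\iota_{\xi^\#}\omega$ forces $H^1(\widetilde{M};\R)\neq 0$), but it is only a constraint, not a mechanism: knowing that $b_1$ must be positive does not tell you how to arrange a non-Hamiltonian action with isolated fixed points, which is exactly the point the paper emphasizes when it notes that such examples resisted construction for decades and that products $\widetilde{M}\times M$ with Tolman's $\widetilde{M}$ remain essentially the only known ones. Second, the actual construction in \cite{T3} is considerably more delicate than generic symplectic cutting or blow-up; it relies on gluing carefully matched Hamiltonian $T^2$-local models so that the residual circle action has only isolated fixed points, and the count of $32$ is an output of that specific gluing. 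If you want to prove this theorem, the only route is to carry out such a construction explicitly and verify non-exactness of $\iota_{\xi^\#}\omega$ on a concrete cycle; none of that is present here.
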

This theorem implies the existence of a non-Hamiltonian symplectic circle action with discrete fixed point set for every $n\geq 3$: it is sufficient to take products $\widetilde{M}\times M$, where $M$ is a compact symplectic manifold endowed with a Hamiltonian circle action with $|M^{S^1}|<\infty$
(see also \cite[Cor.\ 1.2]{T3}, where $M=\C P^{n-3}$). However these products
give, so far, the only known examples of symplectic manifolds with non-Hamiltonian circle actions with discrete fixed point set, and the construction of new examples seems far from trivial. 
Thus we ask the following `weaker' question:
\begin{question}\label{q 2}
Let $(\M,\omega)$ be a compact, connected symplectic manifold. Are there topological conditions which imply that $(\M,\omega)$ can only support a Hamiltonian or only a non-Hamiltonian action?
\end{question}
The answer we give to Question \ref{q 2} is in terms of the Chern numbers of $(\M,\omega,S^1)$.
It is already known that if $\cc_1$ is torsion in $H^2(\M;\Z)$, the manifold cannot support any Hamiltonian circle action (see \cite[Prop.\ 4.3]{GPS}, or also Lemma \ref{Lemma:c1 not torsion}, and \cite[Lemma 3.8]{T2}). 
Thus the analysis we carry out to answer Question \ref{q 2} is under the hypothesis that $\cc_1$ is not torsion. 
A result of Feldman \cite{Fe} asserts that the Todd genus $T_n[\M]$ of $(\M,\omega,S^1)$ is either $1$ or $0$, and it is zero precisely if the action is non-Hamiltonian.
Although Feldman's result is very strong and gives an answer to Question \ref{q 2}, computing the Todd genus in high dimensions is difficult, since $T_n$ becomes a complicated combination of Chern classes. 
In some sense, our results can be regarded as a refinement of Feldman's, since we prove that
\emph{given a compact symplectic manifold $(\M,\omega)$, if certain combinations of Chern numbers vanish, then $(\M,\omega)$ cannot support any Hamiltonian circle action with isolated fixed points}. These combinations of Chern numbers depend on $\k0$, and are easier to compute than the Todd genus if $\k0$ is big enough (see Corollary \ref{cor non ham 2}). 
For $\k0\geq n-2$, we strengthen the result above by giving the possible values of $\cc_1^n[\M]$, $\cc_1^{n-2}\cc_2[\M]$ or a combination of them, these values depending on whether the action is Hamiltonian or not.
This is summarized in the following
 \begin{thm}[{\bf Hamiltonian vs non-Hamiltonian symplectic $S^1$-actions}]\label{nHam-char}
$\;$\\
\noindent
Let $(\M,\omega)$ be a compact, connected symplectic manifold, and suppose it can be endowed with a symplectic circle action with isolated fixed points.
Let $\k0$ be its index. Then:
\begin{itemize}
\item[(I)] If $\k0=0$ or $\k0 > n+1$ the action is non-Hamiltonian and $\cc_1^n[\M]=\cc_1^{n-2}\cc_2[\M]=0$.
\item[(II)] If $\k0=n+1$ then $(\cc_1^n[\M],\cc_1^{n-2}\cc_2[\M])$ is equal to $\Big((n+1)^n,\frac{n(n+1)^{n-1}}{2}\Big)$ or $(0,0)$. 
\item[(III)] If $\k0=n$ then $(\cc_1^n[\M],\cc_1^{n-2}\cc_2[\M])$ is equal to $\Big( 2n^n,n^{n-2}(n^2-n+2)\Big)$ or $(0,0)$.
\end{itemize}
Moreover, in \emph{(II)} and \emph{(III)} the
action is Hamiltonian if and only if $\cc_1^n[\M]\neq 0$ (or equivalently if and only if $\cc_1^{n-2}\cc_2[\M]\neq 0$).
\begin{itemize}
\item[(IV)] If $\k0=n-1$ and $n\geq 2$ then 
\begin{equation}\label{mm1}
\cc_1^{n-2}\cc_2[\M]-\frac{n(n-3)}{2(n-1)^2}\cc_1^n[\M]\quad \in \Big\{0,12 (n-1)^{n-2}\Big\}.
\end{equation}
\item[(V)] If $\k0=n-2$ and $n\geq 3$ then 
\begin{equation}\label{mm2}
\cc_1^{n-2}\cc_2[\M]-\frac{n-3}{2(n-2)}\cc_1^n[\M]\quad \in \Big\{0,24 (n-2)^{n-2}\Big\}.
\end{equation}
\end{itemize}
Moreover, in \emph{(IV)} (resp.\ \emph{(V)}) the action is Hamiltonian if and only if the combination of Chern numbers in \eqref{mm1} (resp.\ \eqref{mm2}) does not vanish.
\end{thm}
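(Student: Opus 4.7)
The plan is to reduce Theorem \ref{nHam-char} to the single binary parameter $N_0$. By Proposition \ref{properties P} (1), $N_0 = \Hil(0) = T_n[\M]$ is the Todd genus, and by Feldman's theorem \cite{Fe} one has $T_n[\M] \in \{0,1\}$ for a symplectic $S^1$-space, with value $1$ precisely when the action is Hamiltonian. Once every Chern-number quantity appearing in (I)--(V) is written as an explicit expression in $N_0$, the ``Hamiltonian iff'' statements become direct translations of Feldman's dichotomy. The Chern-number expressions themselves are the content of the rigidity results already proved in Sections \ref{equations chern}--\ref{sec: generating fct}.

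For part (I), the two subcases are handled separately. If $\k0 = 0$, then $\cc_1$ is torsion, so $\cc_1^n[\M]$ and $\cc_1^{n-2}\cc_2[\M]$ both vanish, and the action cannot be Hamiltonian by Lemma \ref{Lemma:c1 not torsion}. If $\k0 > n+1$, then $\k0 \geq 2$ and Theorem \ref{main theorem} forces $\Hil(z) \equiv 0$, since a nontrivial $\Hil$ would give $\k0 \leq n+1$. Viewing \eqref{H and c} as a polynomial in $z$, the coefficient of $z^h$ is $\frac{1}{\k0^h h!}\cc_1^h T_{n-h}[\M]$. Setting the coefficient of $z^n$ to zero gives $\cc_1^n[\M] = 0$; substituting into the coefficient of $z^{n-2}$, which uses $T_2 = (\cc_1^2 + \cc_2)/12$ and reads $(\cc_1^n + \cc_1^{n-2}\cc_2)[\M]/(12\,\k0^{n-2}(n-2)!)$, then gives $\cc_1^{n-2}\cc_2[\M] = 0$. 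Since $T_n[\M] = \Hil(0) = 0$, Feldman confirms that the action is non-Hamiltonian.

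For parts (II) and (III), Propositions \ref{cor n+1} and \ref{cor n} compute $\Hil(z) = N_0\, \Hil_{\overline{M}}(z)$ with $\overline{M} = \C P^n$ or $Q_n$, and in particular express $\cc_1^n[\M]$ and $\cc_1^{n-2}\cc_2[\M]$ as $N_0$ times the values stated in the theorem. Substituting $N_0 \in \{0,1\}$ produces the two listed ordered pairs, and the equivalence of Hamiltonicity with the nonvanishing of either Chern number (they vanish together because both are proportional to $N_0$) is immediate. Parts (IV) and (V) follow the same pattern: Corollaries \ref{relation c122} and \ref{relation c122 2} give linear relations between $\cc_1^n[\M]$ and $\cc_1^{n-2}\cc_2[\M]$ whose right-hand sides are integer multiples of $N_0$; substituting $N_0 \in \{0,1\}$ yields exactly \eqref{mm1} and \eqref{mm2}, with the displayed combination nonvanishing precisely when $N_0 = 1$, i.e.\ precisely when the action is Hamiltonian.

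The bulk of the conceptual work sits in the rigidity results of Sections \ref{equations chern}--\ref{sec: generating fct}, which identify $\Hil(z)$ (and the associated Chern numbers) in terms of a single parameter $N_0$ for every $\k0 \geq n-2$. Against that backdrop the present theorem is essentially a bookkeeping exercise, whose only mildly technical step is the extraction of $\cc_1^{n-2}\cc_2[\M] = 0$ from $\Hil(z) \equiv 0$ in part (I); this is a short inspection of two coefficients in \eqref{H and c} using the explicit forms of $T_1$ and $T_2$. No deeper obstacle is anticipated.
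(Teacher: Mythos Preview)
Your proposal is correct and follows essentially the same route as the paper's own proof: both reduce everything to the dichotomy $N_0\in\{0,1\}$ and then invoke Propositions \ref{cor n+1}, \ref{cor n} and Corollaries \ref{relation c122}, \ref{relation c122 2} for (II)--(V), while (I) comes from the bound $\k0\leq n+1$ (equivalently, $\Hil\equiv 0$ when $\k0>n+1$) together with the torsion case. The only cosmetic difference is that you appeal to Feldman's theorem for $T_n[\M]\in\{0,1\}$, whereas the paper cites the equivalent Lemma~\ref{N0 1} (McDuff) directly in terms of $N_0$; since $N_0=\td(\M)$ by Corollary~\ref{todd genus comp}, these are the same statement.
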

\begin{rmk}
\begin{itemize}
\item[(1)] This theorem implies that for $\k0\geq n-2$ the Chern numbers $\cc_1^n[\M]$ and $\cc_1^{n-2}\cc_2[\M]$ of $(\M,\omega,S^1)$ are very \emph{rigid}. Hence it gives necessary conditions for a compact, connected symplectic manifold $(\M,\omega)$ with $\k0> \max\{n-3,0\}$ to support a symplectic circle action with isolated fixed points. 
\item[(2)] Given a compact, connected symplectic manifold $(\M,\omega)$ of dimension $2n$, Theorem \ref{nHam-char} implies that
if the index satisfies $\k0\geq n$ and $\cc_1^n[\M]$ or $\cc_1^{n-2}\cc_2[\M]$ vanish, then $(\M,\omega)$ cannot be endowed with \emph{any} Hamiltonian circle
action with isolated fixed points. A similar conclusion holds for $\k0\in \{n-2,n-1\}$, by considering the combinations of Chern numbers in \eqref{mm1} and \eqref{mm2}.
\item[(3)] The above results are stated in terms of the Chern numbers $ \cc_1^n[\M]$ and $ \cc_1^{n-2}\cc_2[\M]$; however similar conclusions 
can be obtained for $\cc_1^h\,T_{n-h}[\M]$, for $h=0,\ldots,n$ (see Remark \ref{other comb}).
\end{itemize}
\end{rmk}
Finally, in Section \ref{examples} we analyse the geography problem for $(\M,\omega,S^1)$ when $n\leq 4$.
One of the goals is to find,
in the Hamiltonian case,
formulas for the Chern numbers in terms of $\k0$ and the Betti numbers of $\M$. 
For instance, the geography problem for $n=2$ can be completely solved (Corollary \ref{geo s}), and for $n=3,4$
we solve it for every $\k0\geq n$ (Propositions \ref{dim 6} and \ref{dim 8}).
As a byproduct of the investigation in dimension $8$, we prove that if a compact, connected symplectic manifold
of dimension $8$ supports a Hamiltonian $S^1$-action with
isolated fixed points, and if the minimal Chern number is even, then $\cc_2^2[\M]+2\, b_2(\M)=98 +b_4(\M)$ (Corollary \ref{c228h}).

\vspace{.5cm} 
\textbf{Acknowledgements.}\
First of all, I would like to thank Leonor Godinho for many fruitful conversations during the time I spent at Instituto Superior T\'ecnico, for inspiring this work and reading previous drafts. I would also like to thank Hansj\"org Geiges for useful discussions, and in particular for suggesting Remark \ref{mcn}. Frederik von Heymann explained to me many useful facts about reflexive polytopes, and strongly inspired Section 
\ref{connections ehrhart}.

Although I have never met him, I would like to dedicate this work to the memory of Akio Hattori who, through his articles, taught me so much. 

\section{Background and preliminary results}\label{background}
The main purpose of this section is to recall background material, set up notation and state preliminary results needed in the forthcoming sections.

Let $(\M,\J)$ be a compact, connected almost complex manifold of dimension $2n$.
Thus $\J\colon T\M \to T\M$ is a complex structure on the tangent bundle of $\M$, and 
for such manifold we consider the Chern classes of the tangent bundle, denoted by $\cc_j\in H^{2j}(\M;\Z)$\footnote{To avoid confusion, if in the same paragraph we also deal with
Chern classes of other bundles, we
will denote the Chern 
classes of the tangent bundle by $\cc_j(\M)$.},
as well as the Chern numbers 
$ \cc_{j_1}\cdots \cc_{j_l}[\M]\in \Z$, for every partition $(j_1,\ldots,j_l)$ of $n$, i.e.\ $j_1+\ \cdots \ +j_l=n$ and $j_m\in \mathbb{N}$ for $m=1\ldots,l$.

Moreover assume that $\ac$ is an $S^1$-space, i.e.\ $(\M,\J)$ is endowed with a $\J$-preserving $S^1$-action with nonempty and discrete fixed point set $\M^{S^1}=\{p_0,\ldots,p_N\}$, for some $N\in \Z_{>0}$.

For every $p_i\in M^{S^1}$ we denote by $w_{i,1},\ldots,w_{i,n}$ the \emph{weights} of the (isotropy) action of $S^1$ at $p_i$, i.e.\
the $S^1$ representation induced on $T_p\M$ is given by
\begin{equation}\label{weights def}
\alpha\cdot(z_1,\ldots,z_n)=(\alpha^{w_{i,1}}z_1,\ldots,\alpha^{w_{i,n}}z_n)\;\quad\mbox{for every}\quad \alpha\in S^1,
\end{equation}
for a suitable choice of complex coordinates $(z_1,\ldots,z_n)$ on $T_p\M\simeq \C^n$. We also denote by $W_i$ the (multi)set of weights at $p_i$, i.e.\;$W_i=\{w_{i,1},\ldots,w_{i,n}\}$.
Note that $w_{i,j}$ is nonzero for every $i=1\ldots,N$ and $j=1\ldots,n$, since the isotropy action commutes with the action on the manifold $\M$,
and $\M^{S^1}$ is discrete.
Finally, we denote by $\lambda_i$ the number of negative weights at $p_i\in M^{S^1}$ and by $N_j$ the number of fixed points with exactly $j$ negative weights, for every $j=0,\ldots,n$. From  \cite[Proposition 2.6]{Ha} we have that
\begin{equation}\label{NiN}
N_j=N_{n-j}\quad \mbox{for every}\quad j=0,\ldots,n\,. 
\end{equation}

Let $\K(\M)$ (resp.\;$\Ke(\M)$) be the ordinary (resp.\;$S^1$-equivariant) $K$-theory ring of $\M$, i.e.\;the abelian group associated to the
semigroup of isomorphism classes of complex vector bundles (resp.\;complex $S^1$-vector bundles) over $\M$,
endowed with the direct sum $\oplus$ and tensor product $\otimes$ operation.
Thus in particular
$\K(\{pt\})\simeq \Z$ and $\Ke(\{pt\}) \simeq R(S^1),$
the character ring of $S^1$. Henceforth, we identify the latter with the Laurent
polynomial ring $\Z[t,t^{-1}]$, where $t$ denotes the standard $S^1$-representation. 

Let $H_{S^1}^*(\M;\Z)$ be the $S^1$-equivariant cohomology of $\M$ with $\Z$ coefficients; we recall that this is defined to be the
ordinary cohomology of the Borel model, i.e.\;$
H_{S^1}^*(\M;\Z):=H^*(\M\times_{S^1}S^{\infty};\Z)\,,
$ 
where $S^{\infty}$ is the unit sphere in $\C^{\infty}$. Thus in particular $H_{S^1}^*(\{pt\};\Z)=\Z[x]$, where $x$ has degree $2$.

Finally, let $\pic(\M)$ (resp.\;$\pic_{S^1}(\M)$) be the Picard group of isomorphism classes of complex line bundles (resp.\;equivariant complex line bundles) over $\M$.

In the rest of the section, $\co(\cdot)$ (resp.\;$\coe(\cdot)$) will either denote the cohomology (resp.\;equivariant cohomology) ring 
with $\Z$ coefficients, the $K$-theory (resp.\;equivariant $K$-theory) ring, or the Picard (resp.\;equivariant Picard) group.

For $p\in \M^{S^1}$ let $i_p\colon \{p\}\hookrightarrow \M$ and $i\colon \M^{S^1}\hookrightarrow \M$ denote the natural inclusions; since they are equivariant we have the following induced maps:
$$
i_p^*\colon \coe(\M)\to \coe(\{p\})
$$
and
\begin{equation}\label{istar}
 i^*=\bigoplus_{p\in \M^{S^1}}i_p^*\colon \coe(\M)\to \coe(\M^{S^1})=\bigoplus_{p\in \M^{S^1}}\coe(\{p\})\;.
\end{equation}
We denote $i_p^*(K)$ simply by $K(p)$, for every $p\in \M^{S^1}$ and $K\in \coe(\M)$.

Observe that the unique map
$ \M\to \{pt\}$ induces maps  
\begin{equation*}
\coe(\{pt\})\to \coe(\M)\quad\text{and} \quad \co(\{pt\})\to \co(\M),
\end{equation*}
which give $\coe(\M)$ the structure of an $\coe(\{pt\})$-module, and $\co(\M)$ the structure of an $\co(\{pt\})$-module.

Finally, if $e$ denotes the identity element in $S^1$, the inclusion homomorphism 
$\{e\}\hookrightarrow S^1$ induces a restriction map, also called the ``forgetful homomorphism" 
\begin{equation}\label{restriction}
 r_{\mathcal{H}}\colon \coe(\M)\to \co(\M)\;.
\end{equation}
When $M$ is a point, $r_{\mathcal{H}}$ coincides with the evaluation at $x=0$ in cohomology, and 
 with the evaluation at $t=1$ in $K$-theory and in the Picard group. 
 The homomorphism \eqref{restriction} will be denoted by $r_H$ in cohomology, by $r_K$ in $K$-theory and by $r_{\pic}$ for the Picard group. 

\subsection{Indices of $K$-theory classes}\label{subsec: indeces}
Let 
\begin{equation}\label{indK}
 \ind\colon \K(\M)\to \K(pt)\simeq\Z
\end{equation}
 and 
 \begin{equation}\label{indKe}
 \ind_{S^1}\colon \Ke(\M)\to \Ke(pt)\simeq \Z[t,t^{-1}]  
 \end{equation}
 be the index homomorphisms (or $K$-theoretic push forwards) in ordinary and equivariant $K$-theory.
By the Atiyah-Singer formula, the index in \eqref{indK}  
can be computed as 
\begin{equation}\label{AT formula}
\ind(V)= \ch(V)\ttot [\M]\;,\quad\mbox{for every}\quad V\in \K(\M),
\end{equation}
where $\ch(\cdot)$ is the Chern character homomorphism $\ch\colon \K(\M)\to H^*(\M;\Q)$, and $\ttot$ is the total Todd class of $\M$, i.e.\;the cohomology 
class in $H^*(M;\Z)$ associated to the power series $\displaystyle\frac{x}{1-e^{-x}}$. This is a rational combination of Chern classes, and
the first terms of $\ttot$ are given by
\begin{equation}\label{Todd}
 \ttot=\sum_{j\geq 0}T_j=1+\frac{\cc_1}{2}+\frac{\cc_1^2+\cc_2}{12}+\frac{\cc_1\cc_2}{24}+\frac{-\cc_1^4+4\cc_1^2\cc_2+3\cc_2^2+\cc_1\cc_3-\cc_4}{720}+\ldots 
\end{equation}
where $T_j\in H^{2j}(\M;\Z)$ for every $j$. We also recall that the Todd genus $\td(\M)$ of $\M$ is given by
$$
\td(\M)= \ttot[\M]=T_n[\M]\;.
$$

By the Atiyah-Segal formula \cite{AS},  
the equivariant index \eqref{indKe} of a class $V\in \Ke(\M)$ can be computed 
in terms of $i^*(V)$ and the $S^1$ isotropy representation on $T\M\rvert_{\M^{S^1}}$. Since $M^{S^1}$ is discrete, the Atiyah-Segal formula in this case gives 
\begin{equation}\label{AS formula}
\ind_{S^1}(V)=\sum_{i=0}^N \frac{V(p_i)}{\prod_{j=1}^n (1-t^{-w_{i,j}})}\,,\quad\mbox{for every}\quad V\in \Ke(\M)\;.
\end{equation} 
By \eqref{AT formula}, \eqref{AS formula} and the commutativity of the following diagram 
\begin{equation}\label{K commutes}
\xymatrix{
\Ke(\M) \ar[r]^{r_\K} \ar[d]_{\ind_{S^1}} & \K(\M) \ar[d]_{\ind} \\
 \Z[t,t^{-1}] \ar[r]^{r_\K} &   \Z.
 } \
\end{equation}
it follows that for every $V\in \Ke(M)$ we have
\begin{equation}\label{formula index 2}
\left(\sum_{i=0}^N \frac{V(p_i)}{\prod_{j=1}^n (1-t^{-w_{i,j}})}\right)_{\rvert_{t=1}}=
r_\K(\ind_{S^1}(V))=\ind(r_\K(V))=
 \ch(r_\K(V))\ttot[\M]\;.
\end{equation}

We conclude this subsection by recalling the Atiyah-Bott-Berline-Vergne Localization formula \cite{At,BV}:
\begin{theorem}[ABBV Localization formula]\label{abbv formula}
Let $\M$ be a compact oriented manifold endowed with a smooth $S^1$-action.
Given $\mu\in H_{S^1}^*(\M;\Q)$
\begin{equation*}
\mu[\M]= \sum_{F}\frac{i_F^*(\mu)}{e^{S^1}(N_F)}[F]\;,
\end{equation*}
where the sum is over all the fixed-point set components $F$ of the action, and
$e^{S^1}(N_F)$ is the equivariant Euler class of the normal bundle to $F$.
\end{theorem}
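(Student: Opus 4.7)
The plan is to deduce the formula from three ingredients: the Borel--Atiyah--Segal localization theorem for rationalized equivariant cohomology, a projection formula for the equivariant pushforward along the inclusion of a fixed component, and the fact that the equivariant Euler class of each normal bundle becomes invertible after inverting $x$. Throughout, set $R := H^*_{S^1}(\mathrm{pt};\Q) = \Q[x]$ and let $S \subset R$ be the multiplicative subset generated by $x$.

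First I would prove that the inclusion $i\colon \M^{S^1} \hookrightarrow \M$ induces an isomorphism $i^*\colon S^{-1} H^*_{S^1}(\M;\Q) \to S^{-1} H^*_{S^1}(\M^{S^1};\Q)$. Using an equivariant tubular neighbourhood of $\M^{S^1}$, the long exact sequence of the pair reduces this to showing that $H^*_{S^1}(\M \setminus \M^{S^1};\Q)$ is $x$-torsion. On the complement every stabiliser is a finite subgroup of $S^1$, so the action is almost free; covering $\M \setminus \M^{S^1}$ by finitely many invariant charts of the form $S^1 \times_\Gamma U$ with $\Gamma$ finite and applying Mayer--Vietoris, each local piece has rational equivariant cohomology equal to $H^*(U/\Gamma;\Q)$, which is concentrated in bounded degree and hence annihilated by a power of $x$.

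Next, for each connected fixed component $F$ with inclusion $i_F\colon F \hookrightarrow \M$, I would establish the projection formula $i_F^*(i_F)_* \alpha = e^{S^1}(N_F) \cup \alpha$ for $\alpha \in H^*_{S^1}(F;\Q)$, via the equivariant Thom isomorphism and the fact that the restriction of the Thom class to the zero section is the Euler class. I would then check that $e^{S^1}(N_F)$ is invertible in $S^{-1} H^*_{S^1}(F;\Q)$: after passing to a finite cover of $F$ if needed, $N_F$ splits $S^1$-equivariantly as a sum of complex line bundles $L_j$ with nonzero weights $w_j$, so $e^{S^1}(N_F) = \prod_j \bigl(w_j x + c_1(L_j)\bigr) = x^r \prod_j \bigl(w_j + c_1(L_j)\,x^{-1}\bigr)$, where $r = \rank N_F$. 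Since each $c_1(L_j)$ is nilpotent in $H^*(F;\Q)$, the second factor is a unit once $x$ is inverted, and the first factor $x^r$ is invertible in $S^{-1}R$.

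With these in place, for any $\mu \in H^*_{S^1}(\M;\Q)$ define $\beta_F := i_F^*(\mu)/e^{S^1}(N_F) \in S^{-1} H^*_{S^1}(F;\Q)$ and consider $\nu := \mu - \sum_F (i_F)_*(\beta_F)$ in $S^{-1} H^*_{S^1}(\M;\Q)$. For each $F'$ one has $i_{F'}^*(i_F)_*\beta_F = 0$ when $F \neq F'$ (the pushforward is supported in a tubular neighbourhood of $F$, disjoint from $F'$), while $i_{F'}^*(i_{F'})_*\beta_{F'} = e^{S^1}(N_{F'})\beta_{F'} = i_{F'}^*(\mu)$; thus $i_{F'}^*(\nu)=0$ for every $F'$, and the localization isomorphism forces $\nu=0$. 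Pushing forward via the equivariant integration $\pi_*$ and using the functoriality $\pi_* (i_F)_* = (\pi_F)_*$ yields
\[
\pi_*(\mu) = \sum_F (\pi_F)_*\!\left(\frac{i_F^*(\mu)}{e^{S^1}(N_F)}\right),
\]
which is the stated formula once both sides, a priori elements of $S^{-1}R$, are recognised as lying in $R = \Q[x]$ (the left side manifestly, hence the right side as well). The main obstacle will be the localization theorem itself, in particular the technical step that rational equivariant cohomology of an almost-free $S^1$-space is $x$-torsion; once that is granted, the remainder of the argument is essentially a formal manipulation.
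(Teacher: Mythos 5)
The paper offers no proof of this statement: it is quoted purely as background (with references to Atiyah--Bott and Berline--Vergne), so there is no in-paper argument to compare yours against. Your proposal is the standard Atiyah--Bott proof, and its overall architecture is sound: Borel localization after inverting $x$, the self-intersection formula $i_F^*(i_F)_*\alpha = e^{S^1}(N_F)\cup\alpha$, invertibility of the equivariant Euler class in the localized ring, and the formal assembly via $\nu=\mu-\sum_F(i_F)_*\beta_F$ followed by pushforward to a point. Two details deserve repair. First, the assertion that $N_F$ splits $S^1$-equivariantly into line bundles ``after passing to a finite cover of $F$'' is not correct as stated (finite covers do not split vector bundles); the standard fix is either to invoke the splitting principle, or, more directly, to use the canonical weight decomposition $N_F=\bigoplus_w N_w$ with all $w\neq 0$ and compute $e^{S^1}(N_F)=\prod_w\sum_{j=0}^{d_w}c_j(N_w)(wx)^{d_w-j}$, whose top term in $x$ is $\bigl(\prod_w w^{d_w}\bigr)x^{r}$ with nonzero coefficient while all remaining terms involve nilpotent classes on $F$; invertibility in $S^{-1}H^*_{S^1}(F;\Q)$ follows. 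Second, for the pushforwards $(i_F)_*$ and $(\pi_F)_*$ to be defined you should record that each fixed component $F$ is canonically oriented: the nonvanishing of the weights endows $N_F$ with a complex structure, hence an orientation, which together with the orientation of $\M$ orients $F$. With these adjustments the argument is complete and is the expected proof of the cited theorem.
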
 

\subsection{Equivariant Chern classes and equivariant complex line bundles}\label{ecc}

Given a complex vector bundle $V\to \M$, denote by
$\cc(V)=\sum_i\cc_i(V)\in H^*(\M;\Z)$ the total Chern class of $V$, and if $V$ is equivariant, by $\cc^{S^1}(V)=\sum_i\cc_i^{S^1}(V)\in H^*_{S^1}(\M;\Z)$
the total equivariant Chern class, i.e.\;the total Chern class of the bundle
$V\times_{S^1}S^{\infty}\to \M\times_{S^1}S^{\infty}$.
It is easy to check that when $V=T\M$, if $\cc^{S^1}(\M)$ denotes the total equivariant Chern class 
of the tangent bundle $T\M$, then 
for every $p_i\in M^{S^1}$, $\cc^{S^1}(\M)(p_i)=\prod_{j=1}^n(1+w_{i,j}x)$, and hence
$\cc^{S^1}_j(\M)(p_i)=\sigma_j(w_{i,1},\ldots,w_{i,n})x^j$, where $\sigma_j(x_1,\ldots,x_n)$ denotes the $j$-th elementary polynomial in $x_1,\ldots,x_n$.

If $(\M,\J)$ is acted on by a circle $S^1$ preserving the almost complex structure,
it is a natural question to ask whether a given complex vector bundle $V$ over $\M$ admits an equivariant extension, i.e.\;whether the $S^1$-action can be
lifted to $V$, making the projection $V\to \M$ equivariant.  
This question has been studied in different settings, and 
for (complex) line bundles $\LL$ it has been completely answered by Hattori and Yoshida \cite[Theorem 1.1, Corollary 1.2]{HY} (see also \cite{HL,Mu} and \cite[Appendix C]{GKS}); 
here we summarise their main result in a different language.
\begin{theorem}[Hattori-Yoshida]
The equivariant first Chern class
\begin{equation}\label{isom ce}
\ce\colon \pic_{S^1}(\M)\to H_{S^1}^2(\M;\Z)
\end{equation} 
is an isomorphism. As a consequence,  
a line bundle $\LL$ admits an equivariant extension if and only if its first Chern class $\ce(\LL)$ is in the image
of the restriction map

\begin{equation}\label{restriction H2}
r_H\colon H_{S^1}^2(\M;\Z)\to H^2(\M;\Z). 
\end{equation}
\end{theorem}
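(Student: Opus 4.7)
The plan is to reduce this to the classical statement that the first Chern class induces an isomorphism $\cc_1\colon \pic(X) \to H^2(X;\Z)$ for any CW complex $X$, which rests on the identification $BU(1) = K(\Z,2)$: line bundles on $X$ are classified by homotopy classes of maps $X \to BU(1)$, and these are in bijection with $H^2(X;\Z)$ via pullback of the fundamental class.

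First I would exhibit a natural bijection between $\pic_{S^1}(\M)$ and the ordinary Picard group $\pic(\M_{S^1})$ of the Borel mixing space $\M_{S^1} := \M \times_{S^1} ES^1$. Given an equivariant line bundle $\LL \to \M$, the associated bundle $\LL \times_{S^1} ES^1 \to \M_{S^1}$ is an ordinary line bundle; conversely, any line bundle on $\M_{S^1}$ pulls back along the $S^1$-equivariant projection $\M \times ES^1 \to \M_{S^1}$ and, by contractibility of $ES^1$ together with $S^1$-homotopy invariance, descends canonically to an equivariant line bundle on $\M$. A check shows that these two operations are mutually inverse up to isomorphism.

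Composing this bijection with the classical isomorphism $\cc_1\colon \pic(\M_{S^1}) \to H^2(\M_{S^1};\Z) = H^2_{S^1}(\M;\Z)$ yields $\ce$, because by definition the equivariant first Chern class $\ce(\LL)$ is nothing other than the ordinary first Chern class of $\LL \times_{S^1} ES^1$. This establishes that \eqref{isom ce} is an isomorphism. For the corollary, I would use the commutative square
\begin{equation*}
\xymatrix{
\pic_{S^1}(\M) \ar[r]^{r_{\pic}} \ar[d]_{\ce} & \pic(\M) \ar[d]^{\cc_1} \\
H^2_{S^1}(\M;\Z) \ar[r]^{r_H} & H^2(\M;\Z),
}
\end{equation*}
whose commutativity reflects the fact that, under the Borel construction, forgetting the $S^1$-equivariant structure corresponds to restricting a bundle on $\M_{S^1}$ along a fiber inclusion $\M \hookrightarrow \M_{S^1}$. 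Since both vertical arrows are bijections, the image of $r_{\pic}$ is carried by $\cc_1$ bijectively onto the image of $r_H$; therefore a line bundle $\LL$ admits an $S^1$-equivariant lift if and only if $\cc_1(\LL) \in \mathrm{im}(r_H)$.

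The main technical obstacle is the first step: proving that the Borel construction really does induce a bijection at the level of isomorphism classes, as opposed to merely producing equivariant bundles from ordinary ones on the mixing space. The subtlety is that two equivariant bundles on $\M$ might become isomorphic after applying $\LL \mapsto \LL \times_{S^1} ES^1$ without being equivariantly isomorphic themselves; ruling this out requires exploiting the contractibility of $ES^1$ and a careful $S^1$-homotopy argument to descend ordinary isomorphisms on $\M_{S^1}$ back to equivariant ones on $\M$.
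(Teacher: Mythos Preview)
Your argument for the corollary is exactly what the paper does: the same commutative square, the same appeal to the classical isomorphism $\cc_1\colon \pic(\M)\to H^2(\M;\Z)$, and the same conclusion that $\LL$ lifts if and only if $\cc_1(\LL)$ lies in the image of $r_H$.

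For the isomorphism \eqref{isom ce} itself, the paper does \emph{not} give a proof; it simply cites Hattori--Yoshida \cite{HY} (and refers also to \cite{HL,Mu,GKS}). So you are supplying more than the paper does. Your Borel-construction strategy is the standard modern route (this is essentially how it is done in \cite[Appendix C]{GKS}): identify $\pic_{S^1}(\M)$ with $\pic(\M\times_{S^1}ES^1)$ and then invoke $BU(1)\simeq K(\Z,2)$. The original argument in \cite{HY} proceeds instead by obstruction theory for lifting the $S^1$-action to the total space of $\LL$, so your approach is genuinely different in flavor, though both lead to the same result. Your version has the advantage of being conceptually clean and generalizing immediately to other compact groups; the obstruction-theory approach is more hands-on and makes the parametrization of lifts by $H^2(BS^1;\Z)\simeq\Z$ (used just after the theorem in the paper) more transparent.

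The one place to be careful, which you rightly flag, is the bijection $\pic_{S^1}(\M)\cong\pic(\M_{S^1})$. The forward direction is formal; the inverse requires an equivariant homotopy-invariance statement for line bundles along the projection $\M\times ES^1\to \M$ with contractible fiber $ES^1$. This is true but, since $ES^1$ is infinite-dimensional, one typically argues via the finite approximations $\M\times_{S^1}S^{2k+1}$ and a limit, or appeals directly to the representability of equivariant line bundles by an equivariant $K(\Z,2)$. Either way the gap is fillable, and your identification of it as the main technical point is accurate.
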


The second assertion follows from the commutativity of the following diagram
$$
 \xymatrix{ 
\pic_{S^1}(\M) \ar[d]_{r_{\pic}} \ar[r]^-{\ce} &  H_{S^1}^2(\M;\Z)\ar[d]^{r_H} \\
\pic(\M) \ar[r]^-{\cc_1} & H^2(\M;\Z) \\
}
$$
and the fact that the first Chern class map $\cc_1$ on the bottom row is an isomorphism.

Moreover, for any line bundle $\LL$ whose first Chern class is in the image of \eqref{restriction H2}, which will henceforth be called
\emph{admissible},
all the possible equivariant
extensions are parametrised by $H^2(\C P^{\infty};\Z)\simeq \Z$. More precisely, given an admissible $\LL$ and two equivariant
extensions $\LL^{S^1}_1$ and $\LL^{S^1}_2$, there exists $a\in \Z$ such that $\ce(\LL^{S^1}_1)-\ce(\LL^{S^1}_2)=ax$. In particular we have that 
\begin{equation}\label{trivial constant}
\mbox{\emph{if}}\;\;\LL \;\;\mbox{\emph{is trivial, then }}\ce(\LL^{S^1})(p)=ax\quad\mbox{\emph{for every} }p\in \M^{S^1},\mbox{ \emph{for some} }a\in \Z.
\end{equation}

In \cite[Lemma 3.2]{Ha}, Hattori proves that if $\LL$ is admissible, and $\LL'$ is such that $\cc_1(\LL)=k\cc_1(\LL')$ for some nonzero integer $k$,
then $\LL'$ is also admissible; moreover every line bundle whose first Chern class is in $\tor(H^2(\M;\Z))$, the torsion subgroup of $H^2(\M;\Z)$, is admissible.
An example of admissible line bundle is given by the determinant line bundle $\Lambda^n(T\M)$.
In fact it is well-known that $\cc_1(\M)$ always admits an equivariant extension, given by the equivariant
first Chern class $\ce(\M)$. Hence $\Lambda^n(T\M)$ is admissible, since $\cc_1(\Lambda^n(T\M))=\cc_1(\M)$.
Moreover the trivial bundle is clearly admissible.

Let $\Lat$ be the lattice given by $H^2(\M;\Z)/\tor(H^2(\M;\Z))$ and $$\pi\colon H^2(\M;\Z)\to \Lat$$ the projection. The following lemma is an immediate consequence of \cite[Lemma 3.2]{Ha}.
\begin{lemma}\label{line admissible}
Let $\ac$ be an $S^1$-space and let $\cc_1$ be the first Chern class of the tangent bundle. 
Suppose that $\cc_1$ is not a torsion element, i.e.\;$\pi(\cc_1)\neq 0$, and let $\eta$ be a primitive element in $\Lat$ such that  $\pi(\cc_1)=k_0\eta$, for some $k_0\in \Z\setminus\{0\}$. Then every line bundle $\LL$ 
such that $\pi(\cc_1(\LL))=k\,\eta$ is admissible, for every $k\in \Z$.
\end{lemma}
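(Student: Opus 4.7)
The plan is to reduce the problem to the admissibility of a single ``primitive'' line bundle $\LL_0$ lifting $\eta$, and then to build every other $\LL$ with $\pi(\cc_1(\LL))\in\Z\cdot\eta$ by tensoring $\LL_0^{\otimes k}$ with a torsion-first-Chern-class line bundle. I will rely on two preliminary observations that are implicit in the excerpt. First, since the restriction map $r_H$ in \eqref{restriction H2} is a group homomorphism and $\cc_1\colon\pic(\M)\to H^2(\M;\Z)$ is an isomorphism intertwining $\otimes$ with $+$, the set of admissible line bundles is closed under tensor products and inverses. Second, by the discussion preceding the lemma, line bundles with torsion $\cc_1$ are automatically admissible, and so is the determinant bundle $\Lambda^n(T\M)$, whose first Chern class is $\cc_1(\M)$.

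\smallskip
The key step will be to produce $\LL_0$ and prove it is admissible. I would pick any lift $\tilde{\eta}\in H^2(\M;\Z)$ with $\pi(\tilde{\eta})=\eta$ and take $\LL_0$ to be the line bundle with $\cc_1(\LL_0)=\tilde{\eta}$. Since $\pi(k_0\,\tilde{\eta})=k_0\,\eta=\pi(\cc_1(\M))$, the class $\tau:=k_0\,\tilde{\eta}-\cc_1(\M)$ lies in $\tor(H^2(\M;\Z))$. Let $\LL_\tau$ be the line bundle with $\cc_1(\LL_\tau)=\tau$; it is admissible by the torsion case, and hence so is $\LL:=\Lambda^n(T\M)\otimes \LL_\tau$ by closure under $\otimes$. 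By construction
\begin{equation*}
\cc_1(\LL)=\cc_1(\M)+\tau=k_0\,\tilde{\eta}=k_0\,\cc_1(\LL_0),
\end{equation*}
so a direct application of \cite[Lemma 3.2]{Ha} with integer $k=k_0$ yields the admissibility of $\LL_0$.

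\smallskip
To finish, given an arbitrary line bundle $\LL$ with $\pi(\cc_1(\LL))=k\,\eta$, the difference $\cc_1(\LL)-k\,\cc_1(\LL_0)$ is torsion; realising it as the first Chern class of an admissible line bundle $\LL_{\tau'}$ and using $\pic(\M)\cong H^2(\M;\Z)$ gives $\LL\cong \LL_0^{\otimes k}\otimes \LL_{\tau'}$, whence admissibility follows from closure under tensor products and inverses. The only delicate point in the argument is the bookkeeping of the torsion discrepancy between $k_0\,\tilde{\eta}$ and $\cc_1(\M)$, which is precisely what forces the detour through $\LL_\tau$ rather than simply equating $\cc_1(\M)$ with $k_0\,\cc_1(\LL_0)$; once this torsion term has been absorbed via the admissibility of torsion line bundles, the rest is formal.
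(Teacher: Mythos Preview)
Your argument is correct and is precisely the unpacking of what the paper means by ``immediate consequence of \cite[Lemma 3.2]{Ha}'': the paper gives no further proof beyond that citation, while you have spelled out the torsion bookkeeping and the closure of admissible bundles under $\otimes$ and inverses that make the deduction go through. In particular, your detour through $\LL_\tau$ to align $k_0\,\cc_1(\LL_0)$ with the first Chern class of an already-admissible bundle before invoking Hattori's divisibility lemma is exactly the right move, and the final tensor decomposition $\LL\cong\LL_0^{\otimes k}\otimes\LL_{\tau'}$ is the natural way to conclude.
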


Observe that the {\bf index} $\k0$ of $(\M,\J)$, as defined in the introduction, is the same as the largest integer satisfying $\pi(\cc_1)=\k0 \pi(\eta_0)$, for some non-torsion $\eta_0\in H^2(\M;\Z)$. Note that, when $\cc_1$ is not torsion, $\pi(\eta_0)$ is necessarily primitive in $\Lat$.

In the rest of this note, we will make use of the following {\bf convention}:
Let $\tau$ be an element of $H_{S^1}^2(\M^{S^1};\Z)$; thus $\tau(p)=a_px\in H_{S^1}^2(\{p\};\Z)$, where $a_p\in \Z$ and $x$ is the generator of $H_{S^1}^2(\{p\};\Z)=H^2(\C P^{\infty};\Z)$.
For the sake of simplicity, \emph{we henceforth identify $\tau\in H_{S^1}^2(\M^{S^1};\Z)$ with the map from $M^{S^1}$ to $\Z$ which assigns to $p$ the integer $a_p$.}
$\;$\\

Note that for every $\LL^{S^1}\in \pic_{S^1}(\M)$ and every $p_i\in M^{S^1}$ 
\begin{equation}\label{elb}
\LL^{S^1}(p_i)=t^{a_i},\quad \mbox{where}\;\; a_i \;\; \mbox{is the integer given by}\;\;\; \ce(\LL^{S^1})(p_i).
\end{equation}%

In virtue of the isomorphism \eqref{isom ce}, given a class $\tau\in H_{S^1}^2(\M;\Z)$ (resp.\;$\tau'\in H^2(\M;\Z)$), we will denote by $\e{\tau}$ the isomorphism class of equivariant line
bundles whose first equivariant Chern class is $\tau$ (resp.\;the isomorphism class of line
bundles whose first Chern class is $\tau'$).
We conclude this section with the following
\begin{prop}\label{symmetries}
Let $\ac$ be an $S^1$-space with
$\M^{S^1}=\{p_0,\ldots,p_N\}$. Let $\cc_1$ and $\ce$ be respectively the first Chern class and the equivariant first Chern class of the tangent bundle
of $\M$. Then, for every $\tau\in H_{S^1}^2(\M;\Z)$ we have
\begin{equation}\label{eq index symmetry}
\ind_{S^1}(\e{\tau})=(-1)^n\ind_{\widetilde{S}^1}(\e{(-\tau-\cc_1^{\widetilde{S}^1})})\,,
\end{equation}
where $\widetilde{S}^1$ is the circle $S^1$ with orientation reversed. 
Thus
\begin{equation}\label{index symmetry}
\ind(\e{r_H(\tau)})=(-1)^n\ind(\e{(-r_H(\tau)-\cc_1)}).
\end{equation}
\end{prop}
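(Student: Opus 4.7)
The plan is to prove the equivariant identity \eqref{eq index symmetry} by directly computing both sides with the Atiyah-Segal localization formula \eqref{AS formula}, and then to deduce \eqref{index symmetry} by applying the forgetful map $r_\K$. First I would unpack the effect of orientation reversal: if the $S^1$-isotropy weights at $p_i$ are $w_{i,1},\ldots,w_{i,n}$, then the $\widetilde{S}^1$-weights are $-w_{i,1},\ldots,-w_{i,n}$, so $\cc_1^{\widetilde{S}^1}(p_i)=-\sum_j w_{i,j}$; moreover, under the natural identification of $H^2_{S^1}(\M;\Z)$ with $H^2_{\widetilde{S}^1}(\M;\Z)$, the generator of $H^2(BS^1;\Z)$ changes sign, so a class $\tau$ with integer value $a_i := \tau(p_i)$ in the $S^1$-convention has value $-a_i$ in the $\widetilde{S}^1$-convention. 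Consequently $(-\tau-\cc_1^{\widetilde{S}^1})(p_i)=a_i+\sum_j w_{i,j}$.

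Next I would apply \eqref{AS formula} to both sides, obtaining
\begin{equation*}
\ind_{S^1}(\e{\tau})=\sum_i\frac{t^{a_i}}{\prod_j(1-t^{-w_{i,j}})},\qquad \ind_{\widetilde{S}^1}(\e{-\tau-\cc_1^{\widetilde{S}^1}})=\sum_i\frac{t^{a_i+\sum_j w_{i,j}}}{\prod_j(1-t^{w_{i,j}})}.
\end{equation*}
The key algebraic input is the elementary factorization $1-t^w=-t^w(1-t^{-w})$, which gives $\prod_j(1-t^{w_{i,j}})=(-1)^n\,t^{\sum_j w_{i,j}}\prod_j(1-t^{-w_{i,j}})$. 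Plugging this into the right-hand formula and multiplying by $(-1)^n$ collapses the expression term-by-term into $\ind_{S^1}(\e{\tau})$, establishing \eqref{eq index symmetry}. Equation \eqref{index symmetry} then follows by applying $r_\K$ to both sides: the commutativity of diagram \eqref{K commutes} gives $r_\K\circ\ind_{S^1}=\ind\circ r_\K$ (and similarly for $\widetilde{S}^1$); the isomorphism \eqref{isom ce} intertwines the equivariant and ordinary pictures so that $r_\K(\e{\sigma})=\e{r_H(\sigma)}$; and $r_H(\cc_1^{\widetilde{S}^1})=\cc_1=r_H(\cc_1^{S^1})$ since the underlying ordinary Chern class does not depend on the equivariant lift.

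The main obstacle I anticipate is not the computation itself but the convention-setting: since $\tau$ and $\cc_1^{\widetilde{S}^1}$ live a priori in different equivariant cohomology groups, one must fix a consistent identification and carefully track the sign flips that orientation reversal induces on both the isotropy weights and on the generator of $H^2(BS^1;\Z)$. Once this dictionary is in place the proof reduces to the one-line factorization above, with no appeal to Serre duality or any deeper $K$-theoretic input beyond localization.
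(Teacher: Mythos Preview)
Your proposal is correct and follows essentially the same approach as the paper: both apply the Atiyah--Segal formula \eqref{AS formula}, use the elementary identity $1-t^{w}=-t^{w}(1-t^{-w})$ to pass from $\prod_j(1-t^{-w_{i,j}})$ to $(-1)^n t^{\sum_j w_{i,j}}\prod_j(1-t^{w_{i,j}})$, and then read off the right-hand side as the $\widetilde{S}^1$-index; the non-equivariant statement is deduced in both cases from \eqref{K commutes} and $r_H(\cc_1^{\widetilde{S}^1})=\cc_1$. Your write-up is somewhat more explicit than the paper's about the identification $H^2_{S^1}(\M;\Z)\cong H^2_{\widetilde{S}^1}(\M;\Z)$ and the accompanying sign flips, which the paper leaves implicit, but the mathematical content is identical.
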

\begin{proof}
By \eqref{AS formula} and \eqref{elb} we have that
$$
\ind_{S^1}(\e{\tau})=\sum_{i=0}^N \frac{t^{\tau(p_i)}}{\prod_{j=1}^n(1-t^{-w_{i,j}})}=\sum_{i=0}^N\frac{(-1)^n\,t^{\tau(p_i)+w_{i,1}+\ldots+w_{i,n}}}{\prod_{j=1}^n(1-t^{w_{i,j}})}=(-1)^n\ind_{\widetilde{S}^1}(\e{(-\tau-\cc_1^{\widetilde{S}^1})})\,,
$$
and \eqref{index symmetry} follows from \eqref{K commutes}, \eqref{eq index symmetry} and the fact that $r_H(\ce)=r_H(\cc_1^{\widetilde{S}^1})=\cc_1$.
\end{proof}

\section{Computation of equivariant indices}\label{cei}

In this section we analyse some properties of the equivariant index
of an equivariant line bundle $\LL^{S^1}$. In particular we study  
under which conditions $\LL^{S^1}$ is `\emph{rigid}', namely when its equivariant index $\ind_{S^1}(\LL^{S^1})$ is 
$S^1$-invariant, i.e.\ it belongs to $\Z\subset \Z[t,t^{-1}]$, and determine what the constant is in terms of the restriction to the fixed points of its equivariant first Chern class: this is the
content of Theorem \ref{trick}. As a consequence, we derive conditions that ensure the equivariant index of an equivariant line bundle to be zero. 
This is a generalisation of arguments which had
already been used in different ways by several authors, see for example Hattori \cite[Proposition 2.6]{Ha}, Hirzebruch et al.\;\cite[Section 5.7]{Hi}, Li \cite{L} and 
Li-Liu \cite[Proposition 2.5]{LL}.

The rest of the section is devoted to deriving applications of Theorem \ref{trick} which will be used in the forthcoming sections.

For every point $p_i\in M^{S^1}$, we order the isotropy weights $w_{i,1},\ldots,w_{i,n}$ at $p_i$ in such a way that the first $\lambda_{i}$ are exactly the negative weights at $p_i$.
We define $\cp$ and $\cm$ in $H_{S^1}^2(\M^{S^1};\Z)$ to be
\begin{equation}\label{cpcm}
\cp(p_i)=w_{i,\lambda_{i}+1}+\cdots +w_{i,n}\;\;\;\quad \mbox{and} \quad \;\;\;\cm(p_i)=-(w_{i,1}+\cdots+w_{i,\lambda_i})\,. 
\end{equation}
From the definition it follows that $\cp(p_i)\geq 0$ (resp.\;$\cm(p_i)\geq 0$) and equality holds if and only if $\lambda_i=n$ (resp.\;$\lambda_i=0$).
Moreover, if $\ce$ denotes the equivariant first Chern class of $\M$, we have that $i^*(\ce)=\cp-\cm$. 

\begin{defin}
A class $\tau\in H_{S^1}^2(\M;\Z)$ is said to be \emph{dominated} by $\cp$  (resp.\;by $\cm$) if $\tau(p)\leq \cp(p)$ for every $p\in \M^{S^1}$ 
(resp.\;if $-\tau(p)\leq \cm(p)$ for every $p\in \M^{S^1}$). 
\end{defin}
\begin{rmk}\label{ex 0 and c1}
It is easy to check that the classes $\mathbf{0}$ and $\ce$ are always dominated by both $\cp$ and $\cm$. 
Moreover, if $\tau\in H_{S^1}^2(\M;\Z)$ satisfies $\tau(p)\leq 0$ (resp.\;$\tau(p)\geq 0$) for every $p\in \M^{S^1}$ then $\tau$ is dominated by $\cp$ (resp.\;$\cm$). 
\end{rmk}
\begin{theorem}\label{trick}
Let $\ac$ be an $S^1$-space with
$\M^{S^1}=\{p_0,\ldots,p_N\}$. Let $\tau$ be an element of $H_{S^1}^2(\M;\Z)$ and $\cp$, $\cm$ defined as above. 
For every $p\in \M^{S^1}$, define $\delta^+(p)$ (resp.\;$\delta^-(p)$) to be $1$ if $\tau(p)=\cp(p)$ (resp.\;$-\tau(p)=\cm(p)$) and zero otherwise.
Then
\begin{itemize}
 \item[(i)] If $\tau\in H_{S^1}^2(\M;\Z)$ is dominated by $\cp$ then 
 $$
 \ind_{S^1}(\e{(-\tau)})=\sum_{j\geq 0}b_jt^j\in \Z[t],\quad \mbox{and}\quad b_0= \sum_{i=0}^N\delta^+(p_i)(-1)^{n-\lambda_i}
 $$
 \item[(ii)] If $\tau\in H_{S^1}^2(\M;\Z)$ is dominated by $\cm$ then 
 $$
 \ind_{S^1}(\e{(-\tau)})=\sum_{j\leq 0}b_jt^j\in \Z[t^{-1}],\quad \mbox{and}\quad b_0= \sum_{i=0}^N\delta^-(p_i)(-1)^{\lambda_i}
 $$
 \item[(iii)]  If $\tau\in H_{S^1}^2(\M;\Z)$ is dominated by $\cp$ and $\cm$ then 
 \begin{equation}\label{index integer}
  \ind_{S^1}(\e{(-\tau)})=b_0\in \Z
 \end{equation}
where 
 \begin{equation}\label{index precise}
b_0=\sum_{i=0}^N\delta^+(p_i)(-1)^{n-\lambda_i}= \sum_{i=0}^N\delta^-(p_i)(-1)^{\lambda_i}.
 \end{equation}
\end{itemize}

\end{theorem}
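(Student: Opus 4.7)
The plan is to apply the Atiyah-Segal formula \eqref{AS formula} together with \eqref{elb}, which gives
$$\ind_{S^1}(\e{-\tau}) = \sum_{i=0}^N \frac{t^{-\tau(p_i)}}{\prod_{j=1}^n (1-t^{-w_{i,j}})},$$
and then to show that, under the domination hypothesis, each summand expands as a power series in one prescribed direction, so that the whole sum (a priori only a Laurent polynomial in $\Z[t,t^{-1}]$) in fact lies in $\Z[t]$ or in $\Z[t^{-1}]$. The main manoeuvre for part (i) is the identity $1-t^{-w}=-t^{-w}(1-t^{w})$, applied to each factor of the denominator corresponding to a \emph{positive} weight $w_{i,j}$. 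At the fixed point $p_i$ this rewrites the denominator as $(-1)^{n-\lambda_i}\,t^{-\cp(p_i)}\prod_{j=1}^n(1-t^{|w_{i,j}|})$, so that the $i$-th summand becomes
$$(-1)^{n-\lambda_i}\,\frac{t^{\,\cp(p_i)-\tau(p_i)}}{\prod_{j=1}^n\bigl(1-t^{|w_{i,j}|}\bigr)}.$$

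Since $\tau$ is dominated by $\cp$, the exponent $\cp(p_i)-\tau(p_i)$ is non-negative, and each factor $1/(1-t^{|w_{i,j}|})$ expands as $1+t^{|w_{i,j}|}+t^{2|w_{i,j}|}+\cdots\in\Z[[t]]$. Consequently every summand belongs to $\Z[[t]]$, and because the total sum is known to be a Laurent polynomial, it must lie in $\Z[t]$. The constant term is then extracted by noting that a summand contributes to $t^{0}$ precisely when $\cp(p_i)=\tau(p_i)$, i.e.\ when $\delta^+(p_i)=1$, in which case the contribution is $(-1)^{n-\lambda_i}$; summing over $i$ yields the formula for $b_{0}$ claimed in (i).

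Part (ii) is entirely symmetric: one applies the same identity to the $\lambda_i$ factors indexed by the \emph{negative} weights instead, obtaining a denominator of the form $(-1)^{\lambda_i}\,t^{\cm(p_i)}\prod_j(1-t^{-|w_{i,j}|})$, and the hypothesis $-\tau(p_i)\le\cm(p_i)$ ensures that each summand now lies in $\Z[[t^{-1}]]$; the same argument then identifies the constant term as $\sum_i \delta^-(p_i)(-1)^{\lambda_i}$. Part (iii) follows immediately from (i) and (ii), since a Laurent polynomial lying in $\Z[t]\cap\Z[t^{-1}]$ must be an integer, and the two expressions in \eqref{index precise} are simply the two equivalent ways of reading off its value from the two expansions. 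The only point that demands genuine care is keeping the signs $(-1)^{n-\lambda_i}$ versus $(-1)^{\lambda_i}$ and the accompanying $t$-shifts by $\cp(p_i)$ or $-\cm(p_i)$ consistent throughout the rewriting; apart from this bookkeeping, the argument is a direct computation.
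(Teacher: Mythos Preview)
Your proof is correct and follows essentially the same approach as the paper's. The paper performs the identical rewriting of each summand as $(-1)^{n-\lambda_i}\,t^{\cp(p_i)-\tau(p_i)}/\prod_j(1-t^{|w_{i,j}|})$ and then argues via $\lim_{t\to 0}$ (respectively $\lim_{t\to\infty}$ for part (ii)) that the Laurent polynomial has no negative (respectively positive) powers of $t$; your formal power series expansion in $\Z[[t]]$ (respectively $\Z[[t^{-1}]]$) is an equivalent way of expressing the same computation, and the extraction of $b_0$ is identical.
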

\begin{proof}
By \eqref{AS formula} and \eqref{elb}, we have that 
 \begin{equation}\label{ei1}
 \ind_{S^1}(\e{(-\tau)}) =\sum_{i=0}^N \frac{   t^{-\tau(p_i)} }{\prod_{j=1}^n(1-t^{-w_{i,j}}) }
\end{equation} 
For every $i=0,\ldots,N$, let $f_i(t)$ be the rational function $\displaystyle \frac{   t^{-\tau(p_i)} }{\prod_{j=1}^n(1-t^{-w_{i,j}}) }$, and observe that
$\sum_{i=0}^Nf_i(t)\in \Z[t,t^{-1}]$.
Thus, in order to prove (i), it is sufficient to prove that $\lim_{t\to 0}\sum_{i=0}^Nf_i(t)$ is finite, and its value will be equal to $b_0$. Observe that by definition
of $\cp$, $f_i(t)$ can be rewritten as $\displaystyle \frac{(-1)^{n-\lambda_i}\;t^{-\tau(p_i)+\cp(p_i)}}{\prod_{j=1}^n(1-t^{|w_{i,j}|})}$.
Since by assumption $i^*(\tau)$ is dominated by $\cp$, $\lim_{t \to 0}f_i(t)$ is finite for all $i=0,\ldots,N$, and by definition of $\delta^+$ it follows that 
its value equals to $\delta^+(p_i)(-1)^{n-\lambda_i}$, thus proving (i).

The proof of (ii) follows by a similar argument, by taking $\lim_{t\to \infty}\sum_{i=0}^Nf_i(t)$, and by observing that $f_i(t)$ can be written as
$ \displaystyle \frac{(-1)^{\lambda_i}\;t^{-\tau(p_i)-\cm(p_i)}}{\prod_{j=1}^n(1-t^{-|w_{i,j}|})}$.

Finally, (iii) follows from (i) and (ii).
\end{proof}

\begin{exm}\label{exm:CP3}
Consider $(\C P^3,\J)$ with the standard (almost) complex structure, and $S^1$-action given by 
$$
\lambda \cdot [z_0:z_1:z_2:z_3]=[z_0:\lambda^a z_1:\lambda^{a+b}z_2:\lambda^{a+b+c}z_3],
$$
where $a,b,c$ are pairwise coprime positive integers. This action is ``standard'', in the sense that it is the
restriction to a subtorus of dimension $1$ of the standard toric action of the $3$-dimensional torus $\T^3$ on $\C P^3$.
The fixed point set is given by four points $p_0,p_1,p_2,p_3$, corresponding respectively to $[1:0:0:0],[0:1:0:0],[0:0:1:0],[0:0:0:1]$. 
Let $\tau_0$ be the generator of $H^2(\C P^3,\Z)$ such that $\cc_1(\C P^3)=4\, \tau_0$. It can be checked that $\tau_0$ admits an equivariant
extension\footnote{Indeed, in this case, every class $\gamma\in H^j(\C P^3,\Z)$ admits an equivariant extension, for every $j$. This is due to the fact
that $\C P^3$ with the above $S^1$-action is \emph{equivariantly formal} (see for example \cite{Ki}).} $\tau\in H^2_{S^1}(\C P^3,\Z)$, i.e.\;$r_H(\tau)=\tau_0$; we pick
$\tau$ so that $\tau(p_0)=0$. 
The (multi)sets of isotropy weights at each fixed point, as well as $i^*(\tau)$, $\cp$ and $\cm$, are given in the following table:
\begin{center}
\begin{tabular}{|l|| l|l|l|l|}
\hline
        & $\;\;\;\;\;\;\;\;\;\;\;\;\;\;\;\;W_i$ & $\;\;\;\;\;i^*(\tau)$ & $\;\;\;\;\;\;\;\cp$ & $\;\;\;\;\;\;\;\cm$ \\ \hline 
$p_0:$   & $\{a,a+b,a+b+c\}$ &  $0$ & $3a+2b+c$ & $0$ \\ \hline
$p_1:$   & $\{-a,b,b+c\}$ &  $-a$  & $2b+c$ & $a$ \\ \hline 
$p_2:$    & $\{-b,-a-b,c\}$  & $-a-b$  & $c$ & $a+2b$ \\ \hline 
$p_3$   & $\{-c,-b-c,-a-b-c\}$  & $-a-b-c$   & $0$ & $a+2b+3c$ \\ \hline 
\end{tabular}
\end{center}  
Observe that $\tau$ is dominated by both $\cp$ and $\cm$, and by definition $\delta^+\equiv 0$. Thus Theorem \ref{trick} (iii) implies that $\ind_{S^1}(\e{(-\tau)})=0$, as it
can also be checked directly from here 
\begin{align*}
\ind_{S^1}(\e{(-\tau)})= &\frac{1}{(1-t^{-a})(1-t^{-a-b})(1-t^{-a-b-c})}+\frac{t^a}{(1-t^{a})(1-t^{-b})(1-t^{-b-c})}\\
 &+\frac{t^{a+b}}{(1-t^{b})(1-t^{a+b})(1-t^{-c})}+
\frac{t^{a+b+c}}{(1-t^{c})(1-t^{b+c})(1-t^{a+b+c})}=0\\
\end{align*}

\end{exm}

\begin{rmk}\label{index positive or negative}
Following the discussion in Remark \ref{ex 0 and c1}, by Theorem \ref{trick} we have that if $\tau\in H_{S^1}^2(\M;\Z)$ satisfies $\tau(p)\geq 0$ (resp.\;$\tau(p)\leq 0$)
for all $p\in \M^{S^1}$, then $\ind_{S^1}(\e{\tau})\in \Z[t]$ (resp.\;$\ind_{S^1}(\e{\tau})\in \Z[t^{-1}]$).
\end{rmk}

As an immediate consequence of Theorem \ref{trick}, we have the following
\begin{corollary}\label{index 0 and -c1}
Let $\ac$ be an $S^1$-space with
$\M^{S^1}=\{p_0,\ldots,p_N\}$. Let $N_i$ be the number of fixed points with exactly $i$ negative weights.

If $\mathbf{1}\in \pic_{S^1}(\M)$ denotes the trivial line bundle over $\M$, where $\ce(\mathbf{1})=\mathbf{0}$,  then 
\begin{equation}\label{index 0}
\ind_{S^1}(\mathbf{1})=N_0=N_n\;.
\end{equation}
If  $\widetilde{\LL}^{S^1}\in \pic_{S^1}(\M)$ denotes the determinant line bundle $\Lambda^n(T^*\M)$, where $\ce(\widetilde{\LL}^{S^1})=\ce(\Lambda^n(T^*\M))=-\ce$, then
\begin{equation}\label{index -c1}
\ind_{S^1}(\widetilde{\LL}^{S^1})=(-1)^nN_0=(-1)^nN_n\;.
\end{equation}
\end{corollary}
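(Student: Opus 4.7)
The plan is to deduce both identities as direct applications of Theorem~\ref{trick}(iii), applied to the classes $\tau=\mathbf{0}$ and $\tau=\ce$. By Remark~\ref{ex 0 and c1} both of these classes are dominated by $\cp$ and $\cm$, so \eqref{index integer} and \eqref{index precise} apply, and the entire content of the corollary reduces to identifying which fixed points contribute to the indicator functions $\delta^\pm$.

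For the first claim, take $\tau=\mathbf{0}$, so $\e{(-\tau)}=\mathbf{1}$. Then $\delta^+(p_i)=1$ exactly when $\cp(p_i)=0$, which by the definition of $\cp$ in \eqref{cpcm} happens precisely when $\lambda_i=n$; similarly $\delta^-(p_i)=1$ exactly when $\lambda_i=0$. Substituting into \eqref{index precise} yields
\begin{equation*}
\ind_{S^1}(\mathbf{1})=\sum_{\lambda_i=n}(-1)^{n-n}=N_n \quad\text{and}\quad \ind_{S^1}(\mathbf{1})=\sum_{\lambda_i=0}(-1)^0=N_0,
\end{equation*}
proving \eqref{index 0} (the equality $N_0=N_n$ is also recorded in \eqref{NiN}).

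For the second claim, take $\tau=\ce$, so that $-\tau=-\ce$ is the equivariant first Chern class of $\Lambda^n(T^*\M)=\widetilde{\LL}^{S^1}$. Since $i^*(\ce)=\cp-\cm$, we have $\tau(p_i)=\cp(p_i)$ iff $\cm(p_i)=0$ iff $\lambda_i=0$, and $-\tau(p_i)=\cm(p_i)$ iff $\cp(p_i)=0$ iff $\lambda_i=n$. Hence \eqref{index precise} gives
\begin{equation*}
\ind_{S^1}(\widetilde{\LL}^{S^1})=\sum_{\lambda_i=0}(-1)^{n-0}=(-1)^nN_0 \quad\text{and}\quad \ind_{S^1}(\widetilde{\LL}^{S^1})=\sum_{\lambda_i=n}(-1)^n=(-1)^nN_n,
\end{equation*}
which is \eqref{index -c1}.

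There is essentially no obstacle here beyond a careful bookkeeping of signs and of which fixed points force $\delta^\pm=1$; the entire argument is a direct specialisation of Theorem~\ref{trick}(iii) to the two canonical admissible classes $\mathbf{0}$ and $\ce$, together with the palindromic relation \eqref{NiN} to rewrite $N_n$ as $N_0$.
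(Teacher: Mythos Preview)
Your proof is correct and follows exactly the same approach as the paper's: apply Theorem~\ref{trick}(iii) to $\tau=\mathbf{0}$ and $\tau=\ce$, using Remark~\ref{ex 0 and c1} for domination and then reading off $\delta^\pm$ from the definitions of $\cp$, $\cm$, $N_0$, $N_n$. The paper's proof is simply terser, leaving the bookkeeping of $\delta^\pm$ implicit where you have spelled it out.
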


\begin{proof}
As we have already remarked, the classes $\mathbf{0}$ and $\ce$ are dominated by $\cp$ and $\cm$. Thus \eqref{index 0}
and \eqref{index -c1} follow from Theorem \ref{trick} (iii) and the definition of $N_0$ and $N_n$. 
\end{proof}
Note that equation \eqref{index 0} is already known, see for example \cite[Corollary 2.7]{Ha} (also see \cite[Theorem 2.3]{L}). 

Observe that \eqref{index -c1} can also be obtained by noticing that since $\ce$ is dominated by $\cp$ and $\cm$, 
\eqref{index integer} implies that  
$\ind_{S^1}(\widetilde{\LL}^{S^1})$ is an integer, thus $\ind_{S^1}(\widetilde{\LL}^{S^1})=\ind(r_K(\widetilde{\LL}^{S^1}))$, 
and so \eqref{index -c1} follows from \eqref{index symmetry} in Proposition \ref{symmetries} and \eqref{index 0}.

We also remark that $\ind_{S^1}(\mathbf{1})$ is the Todd genus of $\M$; in fact
from  \eqref{formula index 2} we have that 
\begin{equation}\label{todd genus}
\td(\M)= T_n[\M]= \ch(r_K(\mathbf{1})) \ttot[\M]=\ind(r_K(\mathbf{1}))=\ind_{S^1}(\mathbf{1})\,
\end{equation}
where the second equality follows from observing that $\ch(r_K(\mathbf{1}))=1$, and the
last equality follows from \eqref{K commutes} and the fact that $\ind_{S^1}(\mathbf{1})$ is an integer, thus $\ind(r_K(\mathbf{1}))=r_K(\ind_{S^1}(\mathbf{1}))=\ind_{S^1}(\mathbf{1})$.
By combining \eqref{index 0} and \eqref{todd genus} we recover the following well-known fact (see \cite[Remark 2.10]{Ha} and \cite{Fe}).
\begin{corollary}\label{todd genus comp}
Let $\ac$ be an $S^1$-space,
$N_i$ the number of fixed points with exactly $i$ negative weights, and $\td(\M)$ the Todd genus of $\M$. Then
$$
\td(\M)=N_0=N_n.
$$
\end{corollary}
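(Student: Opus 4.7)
The plan is to assemble this corollary as an immediate consequence of the two preceding results: the computation of $\ind_{S^1}(\mathbf{1})$ in Corollary \ref{index 0 and -c1} and the identification of this equivariant index with the Todd genus in equation \eqref{todd genus}. No new localization computation is required; the key conceptual input (rigidity of the trivial equivariant line bundle $\mathbf{1}$) has already been supplied by Theorem \ref{trick} (iii) applied to $\tau=\mathbf{0}$, since $\mathbf{0}$ is dominated by both $\cp$ and $\cm$.

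More precisely, I would first invoke Corollary \ref{index 0 and -c1} to write $\ind_{S^1}(\mathbf{1}) = N_0 = N_n$, where the second equality uses the Hattori symmetry \eqref{NiN}. Then I would observe that, since $\ind_{S^1}(\mathbf{1})$ lies in $\Z\subset \Z[t,t^{-1}]$, the commutative diagram \eqref{K commutes} implies
\[
\ind(r_K(\mathbf{1})) = r_K(\ind_{S^1}(\mathbf{1})) = \ind_{S^1}(\mathbf{1}).
\]
Finally, applying the Atiyah--Singer formula \eqref{AT formula} to the trivial bundle and using $\ch(r_K(\mathbf{1}))=1$ gives
\[
\ind(r_K(\mathbf{1})) = \ttot[\M] = T_n[\M] = \td(\M),
\]
since only the top-degree component of $\ttot$ contributes to the pairing with $\mu_\M$. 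Chaining these equalities yields $\td(\M)=N_0=N_n$.

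There is essentially no obstacle here: the result is a bookkeeping consequence of results already established in the section. The only tiny subtlety worth making explicit in the write-up is that one must justify passing from the equivariant index to the ordinary index (which is what \eqref{K commutes} together with the rigidity of $\mathbf{1}$ is for), so that the Atiyah--Singer formula can be applied directly to $r_K(\mathbf{1})$.
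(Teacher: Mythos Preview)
Your proposal is correct and follows essentially the same approach as the paper: the paper states the corollary is obtained ``by combining \eqref{index 0} and \eqref{todd genus}'', which is precisely the chain of equalities you assemble (Corollary \ref{index 0 and -c1} for $\ind_{S^1}(\mathbf{1})=N_0=N_n$, then the identification $\td(\M)=\ind_{S^1}(\mathbf{1})$ via \eqref{K commutes} and Atiyah--Singer). Your write-up even makes the passage from the equivariant to the ordinary index slightly more explicit than the paper's one-line justification.
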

Before giving the main application of Theorem \ref{trick}, we prove the following easy but useful lemma. 
\begin{lemma}\label{c1 N0}
Let $\ac$ be an $S^1$-space, $\cc_1$ the first Chern class of the tangent bundle of $\M$, $N_i$ the number of fixed points with exactly $i$ negative weights, and $\td(\M)$ the Todd genus of $\M$.
\begin{itemize}
 \item[(a1)] If $\eta\in \tor(H^2(\M,\Z))$ then 
\begin{equation}\label{index torsion}
\ind(\e{\eta})=\td(\M)=N_0
\end{equation}
and
 \begin{equation}\label{torsion index}
  \ind_{S^1}(\e{\eta^{S^1}})=t^{a}\td(\M)=t^{a}N_0\,,
 \end{equation}
 where $\eta^{S^1}\in H_{S^1}^2(\M,\Z)$ denotes an equivariant extension of $\eta$, and $a=\eta^{S^1}(p)$ for every $p\in \M^{S^1}$.\\
\item[(a2)] If $\cc_1\in \tor(H^2(\M,\Z))$ then $N_0=N_n=0$ and $\td(\M)=0$.
\end{itemize}

\end{lemma}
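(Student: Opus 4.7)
The plan is to deduce (a1) from Corollary \ref{todd genus comp} by reducing the equivariant index to that of the trivial bundle, and then to prove (a2) by playing the formula from (a1) against Corollary \ref{index 0 and -c1}.

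For (a1), since $\eta$ is torsion, the line bundle $\e{\eta}$ is admissible (Lemma \ref{line admissible}, or the discussion preceding it), so it admits an equivariant extension $\e{\eta^{S^1}}$. The first goal is to show that $\eta^{S^1}(p)$ is independent of $p\in\M^{S^1}$. Indeed, choose $m\in\Z_{>0}$ with $m\eta=0$; then $\e{\eta}^{\otimes m}$ has trivial first Chern class, hence is trivial by the Picard isomorphism, and $(\e{\eta^{S^1}})^{\otimes m}$ is one of its equivariant extensions with $\ce=m\,\eta^{S^1}$. By \eqref{trivial constant}, $m\,\eta^{S^1}(p)$ is constant in $p$, hence so is $\eta^{S^1}(p)$; denote this common value by $a\in\Z$. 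Substituting into the Atiyah--Segal formula \eqref{AS formula} via \eqref{elb} factors $t^a$ out of every summand, giving $\ind_{S^1}(\e{\eta^{S^1}})=t^a\,\ind_{S^1}(\mathbf{1})=t^a\,N_0$ by Corollary \ref{index 0 and -c1}. Applying the forgetful map $r_K$ via \eqref{K commutes} yields $\ind(\e{\eta})=N_0$, which equals $\td(\M)$ by Corollary \ref{todd genus comp}.

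For (a2), apply (a1) with $\eta=-\cc_1$ (torsion by hypothesis) and equivariant extension $-\ce$: one obtains $\ind_{S^1}(\e{-\ce})=t^{a'}N_0$, where $a'=-\ce(p)$ is the common value at every fixed point. But $\e{-\ce}$ is precisely the determinant line bundle $\widetilde{\LL}^{S^1}=\Lambda^n(T^*\M)$ of Corollary \ref{index 0 and -c1}, whose equivariant index is $(-1)^n N_0$. Equating the two expressions yields $(t^{a'}-(-1)^n)N_0=0$ in the integral domain $\Z[t,t^{-1}]$. For $n$ odd this reads $(t^{a'}+1)N_0=0$, forcing $N_0=0$ since no Laurent monomial equals $-1$. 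For $n$ even we get $(t^{a'}-1)N_0=0$; if $a'\neq 0$ again $N_0=0$, while if $a'=0$ then $\sum_{j=1}^n w_{i,j}=0$ at every $p_i\in\M^{S^1}$, and since all weights are nonzero this forces at least one positive and one negative weight at each $p_i$, so $1\le\lambda_i\le n-1$ and $N_0=0$. Finally $N_n=N_0=0$ by \eqref{NiN}, and $\td(\M)=N_0=0$ by Corollary \ref{todd genus comp}.

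The main subtlety is the case $n$ even with $a'=0$ in (a2): the equivariant index equation alone does not force $N_0=0$, and one must additionally invoke the geometric fact that isotropy weights at an isolated fixed point are all nonzero, so that weights summing to zero cannot all be of the same sign.
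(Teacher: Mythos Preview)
Your proof of (a1) is correct and matches the paper's approach essentially line for line: both show $\eta^{S^1}(p)$ is constant via the torsion condition and \eqref{trivial constant}, then factor $t^a$ out of the Atiyah--Segal formula to reduce to $\ind_{S^1}(\mathbf{1})$.

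For (a2), your argument is correct but takes an unnecessarily long detour. The paper argues directly: once $\ce(p)=\sum_j w_{i,j}$ is known to be constant in $p$, a fixed point with all positive weights would force this constant to be positive, while one with all negative weights would force it to be negative; since $N_0=N_n$ by \eqref{NiN}, both must vanish. Your route instead compares the two formulas $\ind_{S^1}(\e{-\ce})=t^{a'}N_0$ (from (a1)) and $\ind_{S^1}(\e{-\ce})=(-1)^nN_0$ (from Corollary \ref{index 0 and -c1}) in $\Z[t,t^{-1}]$, splits into parity cases, and in the residual case $n$ even with $a'=0$ ends up using exactly the sign observation the paper uses from the outset. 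So your detour through the index identity and Corollary \ref{index 0 and -c1} buys nothing: the constancy of $\ce(p)$ together with the definition of $N_0,N_n$ already gives the conclusion in one line, uniformly in the parity of $n$ and the value of the constant.
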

\begin{proof}
(a1) First of all, observe that if $\eta\in \tor(H^2(\M,\Z))$ then, by the discussion in Section \ref{ecc}, it admits an equivariant extension $\eta^{S^1}\in H^2_{S^1}(\M,\Z)$. 
By the commutativity of \eqref{K commutes}, in order to prove \eqref{index torsion} it is sufficient to prove \eqref{torsion index}.
If $\eta$ is torsion then there exists $k\in \Z\setminus\{0\}$ such that $k\eta=0$. Thus if we consider an equivariant extension 
$\eta^{S^1}$, by \eqref{trivial constant} we have that $\eta^{S^1}(p)=a$ for some $a\in \Z$, for every $p\in \M^{S^1}$. Hence  
$$
\ind_{S^1}(\e{\eta^{S^1}})=t^{a}\ind_{S^1}(\mathbf{1})=t^{a}\td(\M)=t^aN_0
$$
where the first equality follows from \eqref{AS formula},
the second from \eqref{todd genus}, and the last from Corollary \ref{todd genus comp}.

(a2)
By a similar argument, we have that the integer 
 $\ce(p)$ does not depend on $p\in M^{S^1}$. However $\ce(p_i)=\sum_{j=1}^nw_{i,j}$, and by \eqref{NiN} we have $N_0=N_n$.
So by definition of $N_0$ and $N_n$ we must have that
$N_0=N_n=0$, and by Corollary \ref{todd genus comp} that $\td(\M)=0$.

\end{proof}

The next proposition also follows from Theorem \ref{trick}, but it is a key result for the theorems in the next sections (see also \cite[Assertion 4.10]{Ha} and \cite[Proposition 2.5]{LL}).
\begin{prop}\label{eq index zero}
Let $\ac$ be an $S^1$-space. Let $\ce$ be the equivariant first Chern class of the tangent bundle of $\M$ and $k$ a positive integer such that 
$\ce(p)=k\,\eta^{S^1}(p)+c$ for all $p\in \M^{S^1}$, for some $\eta^{S^1}\in H_{S^1}^2(\M;\Z)$ and $c\in \Z$.
Then
\begin{equation}\label{index 0 1..k0}
 \ind_{S^1}(\e{(-h\eta^{S^1})})=0\quad\mbox{for every}\quad h=1,\ldots,k-1\;.
\end{equation}

\end{prop}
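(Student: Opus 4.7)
The strategy is to apply Theorem~\ref{trick} to two carefully shifted versions of the class $\tau:=h\eta^{S^1}$. The key starting observation is that the hypothesis $\ce(p)=k\,\eta^{S^1}(p)+c$ combined with the identity $i^*(\ce)=\cp-\cm$ yields $\eta^{S^1}(p)=\tfrac{\cp(p)-\cm(p)-c}{k}$ for every $p\in\M^{S^1}$, so that any shifted class $\tau-s$ admits an explicit expression purely in terms of $\cp(p)$, $\cm(p)$, $c$, $k$, $h$, and $s$.

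Let $a\in\Z$ be the smallest integer strictly greater than $-hc/k$, equivalently the unique integer satisfying $a-1\leq -hc/k<a$, and define $\tau_+:=\tau-a$ and $\tau_-:=\tau-(a-1)$. The first step is to check that $\tau_+$ is dominated by $\cp$ and that the associated indicator $\delta^+$ vanishes identically: both conditions reduce to the strict inequality $(k-h)\cp(p)+h\cm(p)+hc+ka>0$, which holds because $(k-h)\cp(p)$ and $h\cm(p)$ are nonnegative while $hc+ka>0$ by the choice of $a$. Part (i) of Theorem~\ref{trick} then forces $\ind_{S^1}(\e{-\tau_+})\in t\,\Z[t]$, i.e.\ this polynomial has no constant term.

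The second step is the symmetric check for $\tau_-$: the dominance by $\cm$ and the vanishing of $\delta^-$ both reduce to $h\cp(p)+(k-h)\cm(p)-hc-k(a-1)>0$, which holds since $\cp(p)$ and $\cm(p)$ cannot both vanish (the isotropy weights at an isolated fixed point are all nonzero), and $hc+k(a-1)\leq 0$ by the choice of $a$. Part (ii) of Theorem~\ref{trick} then gives $\ind_{S^1}(\e{-\tau_-})\in t^{-1}\,\Z[t^{-1}]$.

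Finally, a direct computation from the Atiyah--Segal formula \eqref{AS formula} yields $\ind_{S^1}(\e{-\tau_+})=t^{a}\,\ind_{S^1}(\e{-\tau})$ and $\ind_{S^1}(\e{-\tau_-})=t^{a-1}\,\ind_{S^1}(\e{-\tau})$. Substituting back, the Laurent polynomial $\ind_{S^1}(\e{-\tau})$ must have support contained both in $\{j\geq 1-a\}$ (from $\tau_+$) and in $\{j\leq -a\}$ (from $\tau_-$). These ranges are disjoint, forcing $\ind_{S^1}(\e{-\tau})=0$ as required. The only subtle point is the choice of two adjacent integers $a-1$ and $a$ straddling $-hc/k$: this is what makes the two polynomial-support constraints exactly incompatible. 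All remaining verifications are routine algebraic rearrangements of the explicit formula for $\eta^{S^1}(p)$.
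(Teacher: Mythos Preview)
Your proof is correct, but it takes a genuinely different route from the paper's. The paper first reduces to the case $c=0$ by replacing the given $S^1$-action with the action $(\lambda,q)\mapsto \lambda^k\cdot q$; this multiplies all isotropy weights by $k$, so the new equivariant first Chern class restricts to multiples of $k$ at every fixed point. After this reduction it applies Theorem~\ref{trick}~(iii) directly to $h\eta^{S^1}$, checking that it is dominated by both $\cp$ and $\cm$ with $\delta^\pm\equiv 0$. You instead keep the original action and handle arbitrary $c$ by shifting $\tau=h\eta^{S^1}$ by the two adjacent integers $a$ and $a-1$ that straddle $-hc/k$, then apply parts (i) and (ii) of Theorem~\ref{trick} separately to force disjoint support constraints on the Laurent polynomial $\ind_{S^1}(\e{-\tau})$. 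Your approach avoids having to track how the equivariant index transforms under the modified action, at the cost of the slightly more delicate bookkeeping with the shifts; the paper's reduction is cleaner conceptually but requires the extra observation that vanishing of the index is insensitive to this change of action. Both arguments ultimately rest on the same pair of inequalities $(k-h)\cp(p)+h\cm(p)\geq 0$ and $h\cp(p)+(k-h)\cm(p)>0$, and both need the fact that $\cp(p)$ and $\cm(p)$ cannot vanish simultaneously when $\dim(\M)>0$.
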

\begin{rmk}
Observe that if $\cc_1$ is torsion then $r_H(\eta^{S^1})$ is also torsion, and by Lemma \ref{c1 N0} it follows that   
 \begin{equation}\label{index 0 always}
  \ind_{S^1}(\e{(-h\eta^{S^1})})=0\quad\mbox{for every}\quad h\in \Z
 \end{equation}
 \end{rmk}
\begin{proof}[Proof of Proposition \ref{eq index zero}]
First of all, observe that it is not restrictive to assume that $c=0$. In fact,
let $S^1\times \M\to \M$, $(\lambda,q)\to \lambda\cdot q$ be the given $S^1$-action on $\M$, and consider 
a new action given by $(\lambda,q)\to \lambda^k\cdot q$; we denote by $\widetilde{S}^1$ the new circle acting on $\M$.
Note that the set of fixed points of this action coincides with the old one,
and the new isotropy weights are the old ones multiplied by $k$. Thus 
 $\cc_1^{\widetilde{S}^1}(p)$ is divisible by $k$, for every $p\in \M^{\widetilde{S}^1}=\M^{S^1}$.
So there exists $\widetilde{\eta}\in H_{\widetilde{S}^1}^2(\M;\Z)$ such that
$\cc_1^{\widetilde{S}^1}=k\widetilde{\eta}$. Moreover, if $\ind_{S^1}(\e{\eta^{S^1}})=P(t,t^{-1})$ for some $P\in \Z[x,y]$, then 
$\ind_{\widetilde{S}^1}(\e{\widetilde{\eta}})=t^bP(t^k,t^{-k})$, for some $b\in \Z$. 
Thus $\ind_{S^1}(\e{\eta^{S^1}})=0$ if and only if $\ind_{\widetilde{S}^1}(\e{\widetilde{\eta}})=0$.
Hence we can assume that $\ce(p)=k\,\eta^{S^1}(p)$ for all $p\in \M^{S^1}$.

Notice that for all $p\in \M^{S^1}$ such that $\eta^{S^1}(p)>0$  and all $h=1,\ldots,k-1$, we have
\begin{equation}\label{keta}
h\,\eta^{S^1}(p)<k\,\eta^{S^1}(p)=\ce(p)=\cp(p)-\cm(p)\leq \cp(p)\,,
\end{equation}
thus $h\,\eta^{S^1}$ is dominated by $\cp$ for all $h=1,\ldots,k-1$. Moreover \eqref{keta} implies that
$\delta^+(p)=0$ for all $p\in \M^{S^1}$ such that $\eta^{S^1}(p)>0$, and since $\cp(p)$ is always nonnegative,
$\delta^+(p)=0$ for all $p\in \M^{S^1}$ such that $\eta^{S^1}(p)\neq 0$. 
Finally observe that if $\eta^{S^1}(p)=\cp(p)=0$, then $\ce(p)=0$ and $\cm(p)=0$; however this is impossible, unless
$\dim(\M)=0$. So we can conclude that $\delta^+(p)=0$ for all $p\in \M^{S^1}$.

A similar argument shows that $h\,\eta^{S^1}$ is dominated by $\cm$ for all $h=1,\ldots,k-1$ (and $\delta^-(p)=0$ for all $p\in \M^{S^1}$).
So the conclusion follows from Theorem \ref{trick} (iii).  
\end{proof}

\subsection{Symplectic manifolds} Suppose that $(\M,\omega)$ is a compact, connected symplectic manifold endowed with a symplectic circle action with isolated fixed points. We recall that this triple is denoted by $(\M,\omega,S^1)$. 
The following lemma is a key fact to translate our results in the almost complex category to the symplectic category.
\begin{lemma}[\cite{MD1}]\label{N0 1}
Given $(\M,\omega,S^1)$, 
then $N_0$ can be either $0$ or $1$, and is $1$ exactly if the action is Hamiltonian.
\end{lemma}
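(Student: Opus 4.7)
The plan is to combine the topological identity $N_0 = \td(\M)$ furnished by Corollary \ref{todd genus comp} with two complementary evaluations of the Todd genus: a Morse-theoretic one in the Hamiltonian case and a cohomological one in the non-Hamiltonian case. Since the action is either Hamiltonian or not, showing that $\td(\M)$ equals $1$ in the first case and $0$ in the second will yield both the dichotomy $N_0 \in \{0,1\}$ and the characterisation of when $N_0 = 1$.

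First, suppose the action is Hamiltonian with moment map $\psi \colon \M \to \R$. Since $\M^{S^1}$ is discrete, $\psi$ is a Morse function whose critical set is $\M^{S^1}$, and the Morse index at $p \in \M^{S^1}$ equals $2\lambda_p$; in particular, fixed points with $\lambda_p = 0$ are precisely the local minima of $\psi$. Because $\M$ is compact, $\psi$ attains a minimum on some fixed point, so $N_0 \geq 1$. For the upper bound, I would invoke the Atiyah--Guillemin--Sternberg convexity theorem: every level set $\psi^{-1}(c)$ is connected. If there were two distinct fixed points $p_1, p_2$ with all positive weights, each would be an isolated point of its own level set (as $\psi$ has a strict local minimum there), and by a local-model argument a nearby regular super-level set would acquire an extra small-sphere component, contradicting connectedness. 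Hence $N_0 = 1$, so $\td(\M) = 1$.

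Second, suppose the action is non-Hamiltonian. Here I would appeal to Feldman's computation \cite{Fe}, which asserts $\td(\M) = 0$, immediately yielding $N_0 = 0$. Equivalently, one can argue by contrapositive using McDuff \cite{MD1}: the presence of an extremal fixed point (one with all weights of the same sign) produces local data near that point which can be globalised to a genuine moment map, forcing $[\iota_{\xi^\#}\omega] = 0 \in H^1(\M;\R)$ and hence Hamiltonicity. Either way, $N_0 \geq 1$ would force the action to be Hamiltonian, so in the non-Hamiltonian case $N_0 = 0$.

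The Hamiltonian direction is essentially classical, resting only on standard Morse theory of the moment map together with AGS convexity, so the main obstacle is the non-Hamiltonian implication. That step is the substantive content of the lemma: whether one routes through Feldman's equivariant index-theoretic calculation of $\td(\M)$ or through McDuff's cohomological promotion of a local moment map at an extremal fixed point, both arguments are delicate and genuinely use the symplectic hypothesis in a way that has no analogue in the purely almost complex setting.
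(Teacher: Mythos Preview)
Your proposal is correct, but the paper handles this lemma more economically. The paper does not prove it in full---the lemma is stated with attribution to McDuff \cite{MD1}, and only the Hamiltonian direction is explained in one sentence immediately following the statement: since $\psi$ is a Morse function with only even indices and $\M$ is connected, there is exactly one minimum, hence $N_0=1$. This is the standard perfect-Morse-function argument (equivalently, $N_0=b_0(\M)=1$ because the Morse complex has no odd-degree generators), and it is lighter than your route through Atiyah--Guillemin--Sternberg connectedness of level sets. Your AGS argument is valid, but unnecessary.

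For the non-Hamiltonian direction, your two options have different status. Invoking McDuff's contrapositive is not an alternative proof---it \emph{is} the cited reference \cite{MD1}, and the paper simply defers to it. The Feldman route, by contrast, risks circularity: Feldman's dichotomy $\td(\M)\in\{0,1\}$, with $\td(\M)=0$ exactly when the action is non-Hamiltonian, is---via Corollary~\ref{todd genus comp}---precisely the statement you are trying to prove, so one would need to verify that Feldman's own argument does not already presuppose McDuff's characterisation. Note also that your opening reduction to $N_0=\td(\M)$ is not used by the paper for this lemma; the Hamiltonian direction is pure Morse theory with no appeal to the Todd genus.
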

 If the action is Hamiltonian, then $N_0$ coincides indeed with the number of points of minima of the moment map $\psi$, which
 is $1$ because $\psi$ is a Morse function with only even indices, and $\M$ is assumed to be connected. 
More in general, the equivariant perfection of $\psi$ (see \cite{Ki}) implies that
\begin{equation}\label{bi=Ni}
b_{2j}(\M)= N_j \quad \mbox{for every}\quad j=0,\ldots,n\,, 
\end{equation}
where $b_{2j}(\M)$ denotes the $2j$-th Betti number of $\M$. The following fact is a consequence of the results of this section:
\begin{lemma}\label{Lemma:c1 not torsion}
Given $(\M,\omega,S^1)$, if the action is Hamiltonian then $\cc_1$ is not a torsion
element in $H^2(\M;\Z)$. 
\end{lemma}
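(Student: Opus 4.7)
The plan is to derive an immediate contradiction by combining the two lemmas that have just been established: Lemma \ref{N0 1} pins down $N_0$ under the Hamiltonian hypothesis, while Lemma \ref{c1 N0}(a2) forces $N_0$ to vanish when $\cc_1$ is torsion. Since these two conclusions are incompatible, the torsion hypothesis cannot hold.

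Concretely, I would argue by contradiction. Assume $(\M,\omega,S^1)$ is a Hamiltonian $S^1$-space and suppose that $\cc_1\in \tor(H^2(\M;\Z))$. On the one hand, by Lemma \ref{N0 1} the Hamiltonicity of the action implies $N_0=1$, since $N_0$ counts the fixed points at which the moment map $\psi$ attains its minimum and $\psi$ is a Morse--Bott function with only even indices on the connected manifold $\M$. On the other hand, Lemma \ref{c1 N0}(a2) applied to the torsion class $\cc_1$ yields $N_0=N_n=0$ (and incidentally $\td(\M)=0$). These two equalities $N_0=1$ and $N_0=0$ contradict each other, so $\cc_1$ cannot be a torsion element.

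There is really no obstacle here beyond bookkeeping, because all the substantive work has already been done: the key input is Proposition \ref{eq index zero} combined with Corollary \ref{todd genus comp}, which together produced Lemma \ref{c1 N0}(a2) by observing that if $\ce(p)$ is constant on $\M^{S^1}$ then the balance $\ce(p_i)=\sum_j w_{i,j}$ together with the symmetry $N_0=N_n$ from \eqref{NiN} forces both to be zero. The only conceptual point to highlight is that Hamiltonicity is what obstructs the symmetry between $N_0$ and $N_n$ being realised by the value $0$: the moment map produces a global minimum, so $N_0\geq 1$, and this single fact suffices to rule out torsion $\cc_1$.
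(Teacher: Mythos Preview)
Your proof is correct and is exactly the paper's approach: the paper's proof consists of the single sentence ``It is sufficient to combine Lemma \ref{N0 1} with Lemma \ref{c1 N0} (a2),'' which is precisely the contradiction you spell out. One minor inaccuracy in your commentary (not in the proof itself): Lemma \ref{c1 N0}(a2) does not rely on Proposition \ref{eq index zero}; it follows directly from \eqref{trivial constant}, the observation that $\ce(p_i)=\sum_j w_{i,j}$, the symmetry \eqref{NiN}, and Corollary \ref{todd genus comp}.
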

\begin{proof}
It is sufficient to combine Lemma \ref{N0 1} with Lemma \ref{c1 N0} (a2). 
\end{proof}
\begin{rmk}\label{mcn}
Let $(\M,\omega)$ be a compact symplectic manifold with first Chern class $\cc_1$, and suppose it is not torsion.
Following Definition 6.4.2 in \cite{MDS}, the \emph{minimal Chern number} of $(\M,\omega)$ is defined to be the integer $N$ such that $\langle \cc_1,\pi_2(\M)\rangle = N \Z$.
If $\M$ is simply connected then, by the Hurewicz theorem, we have $\pi_2(\M)=H_2(\M,\Z)$ which, modulo torsion, is isomorphic to $H^2(\M,\Z)$, thus implying that the minimal Chern number agrees with the index of $(\M,\omega)$. A result of Li \cite{Li2} implies that 
 if the $S^1$-action on $(\M,\omega)$ is Hamiltonian with isolated fixed points then $\M$ is simply connected. So it follows that
 if $(\M,\omega)$ is endowed with a Hamiltonian $S^1$-action with isolated fixed points, the minimal Chern number always agrees with the index $\k0$, which is not zero by Lemma \ref{Lemma:c1 not torsion}. \end{rmk}

\section{The Hilbert polynomial of $(\M,\J)$ and the equations in the Chern numbers}\label{equations chern}
We recall from Section \ref{ecc} that $\mathcal{L}$ is the lattice given by $H^2(\M;\Z)/\tor(H^2(\M;\Z))$ and $\pi$ the projection $\pi\colon H^2(\M;\Z)\to \mathcal{L}$.
If $\cc_1$ is not torsion we have $\pi(\cc_1)\neq 0$, so there exists a non-torsion element $\eta_0\in H^2(\M;\Z)$ such that $\pi(\cc_1)=\k0\,\pi(\eta_0)$.
The index $\k0$, and when $\k0>0$ the associated $\eta_0\in H^2(\M;\Z)$ (uniquely defined up to torsion), will play a crucial role in the rest of the section.

Before proceeding, we prove the following Lemma:
\begin{lemma}\label{index torsion independent}
Let $\eta\in H^2(\M;\Z)$ and $\tau\in \tor(H^2(\M;\Z))$. Then  
$$
\ind(\e{(\eta+\tau)})=\ind(\e{\eta})\,.
$$
\end{lemma}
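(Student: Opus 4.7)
My plan is to apply the Atiyah--Singer index formula \eqref{AT formula} and then exploit the fact that the Chern character and Todd class live in rational cohomology, where torsion classes vanish.

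First, I would unpack the notation. By definition, $\e{\eta}$ (respectively $\e{\eta+\tau}$) denotes the isomorphism class of complex line bundles whose first Chern class equals $\eta$ (respectively $\eta+\tau$); since $\cc_1\colon \pic(\M)\to H^2(\M;\Z)$ is an isomorphism, these line bundles are well-defined up to isomorphism, and moreover $\e{\eta+\tau}\cong \e{\eta}\otimes \e{\tau}$. Applying \eqref{AT formula} then gives
\begin{equation*}
\ind(\e{\eta+\tau})=\ch(\e{\eta+\tau})\,\ttot[\M]=e^{\eta+\tau}\,\ttot[\M],
\end{equation*}
where I use that the Chern character of a complex line bundle with first Chern class $\alpha$ is $e^{\alpha}=\sum_{h\geq 0}\alpha^h/h!$.

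Next comes the crucial step: both $\ch(\e{\eta+\tau})$ and $\ttot$ naturally live in $H^*(\M;\Q)$, and the pairing with $[\M]$ factors through $H^*(\M;\Q)$. Under the change-of-coefficients map $H^2(\M;\Z)\to H^2(\M;\Q)$, every torsion class is sent to zero; in particular the image of $\tau$ vanishes. Consequently $e^{\eta+\tau}=e^{\eta}\cdot e^{\tau}=e^{\eta}\cdot 1 = e^{\eta}$ in $H^*(\M;\Q)$. Substituting back yields
\begin{equation*}
\ind(\e{\eta+\tau})=e^{\eta}\,\ttot[\M]=\ch(\e{\eta})\,\ttot[\M]=\ind(\e{\eta}),
\end{equation*}
which is the desired identity.

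There is no real obstacle here; the only thing to be careful about is that the statement concerns the non-equivariant index $\ind$, so I do not need to worry about any equivariant extension of $\tau$ (which would involve the more delicate discussion of admissibility in Section \ref{ecc}). The argument is purely a consequence of the fact that the Todd genus is a rational cohomological invariant that cannot see torsion in $H^2(\M;\Z)$.
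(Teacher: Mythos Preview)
Your proof is correct and essentially identical to the paper's own argument: both apply the Atiyah--Singer formula \eqref{AT formula}, expand the Chern character as $e^{\eta+\tau}=e^{\eta}e^{\tau}$, and then observe that the torsion class $\tau$ contributes nothing to the rational pairing with $[\M]$. The only cosmetic difference is that the paper phrases the last step as ``$\tau^k\alpha[\M]=0$ for all $k>0$'' while you phrase it as ``$\tau$ maps to zero in $H^*(\M;\Q)$''; these are the same observation.
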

\begin{proof}
By \eqref{AT formula} we have that 

\begin{align*}
\ind(\e{(\eta+\tau)})& = \ch(\e{(\eta+\tau)})\ttot[\M]= \left(1+\eta+\frac{\eta^2}{2}+\cdots\right)\left(1+\tau+\frac{\tau^2}{2}+\cdots\right)\ttot[\M]=\\
 & = \left(1+\eta+\frac{\eta^2}{2}+\cdots\right)\ttot[\M]=\ind(\e \eta)\,,
\end{align*}
where the second-last equality follows from the fact that if $\tau$ is torsion then $ \tau^k\alpha[\M]=0$ for all $k>0$ and $\alpha\in H^{2n-2k}(\M;\Z)$.
\end{proof}

In the rest of the section \emph{we assume that $\cc_1$ is not torsion}. Let $\eta_0\in H^2(\M;\Z)$ be such that $\pi(\cc_1)=\k0 \pi(\eta_0)$. Even if $\eta_0$
is not uniquely defined, by Lemma \ref{index torsion independent} the topological index $\ind(\e{\eta})$ is independent on $\eta\in \pi^{-1}(\pi(\eta_0))$. 
Hence, given $(\M,\J)$ with $\cc_1$ not torsion, for every $k\in \Z$ the following integer
\begin{equation}\label{polynomial}
\Hil(k)=\ind(\e{\,k\, \eta_0})
\end{equation}
does not depend on the choice of $\eta_0$. Moreover,
by \eqref{AT formula} we obtain that
\begin{equation}\label{HAT}
\Hil(k)= \Big( \sum_{h\geq 0} \frac{(k\,\eta_0)^h}{h!}\Big)\ttot[\M]= \sum_{h=0}^n k^h\left( \frac{\cc_1^h\,T_{n-h}}{\k0^h\,h!}\right)[\M]
\end{equation}
thus implying that, if $(\M,\J)$ has dimension $2n$, $\Hil(k)$ is a polynomial in $k$ of degree at most $n$. 
The polynomial $\Hil(z)$ defined as
\begin{equation}\label{Hilbert pol}
\Hil(z)= \sum_{h=0}^n a_h z^h=\sum_{h=0}^n \left( \frac{\cc_1^h\,T_{n-h}}{\k0^h\,h!}[\M]\right)z^h, \quad z\in \C 
\end{equation}
will be referred to as
the \emph{Hilbert polynomial of $(\M,\J)$}. 
Thus 
\begin{align}
& a_n=  \frac{1}{\k0^n\,n!} \cc_1^n[\M], \;\;\;\;\;\;a_{n-1}=\frac{1}{2\k0^{n-1}(n-1)!}\cc_1^n[\M],\nonumber \\
\label{ah} & a_{n-2}= \frac{1}{12\k0^{n-2}(n-2)!}(\cc_1^n+\cc_1^{n-2}\cc_2)[\M]\,, \;\;\;\;
\ldots\\
& a_0= T_n[\M] = \td(\M) \nonumber
\end{align}
The first properties of $\Hil(z)$ are given in the following 
\begin{prop}\label{properties P}
Let $\ac$ be an $S^1$-space with $N_0$ fixed points with zero negative weights.
Let $\cc_1$ be the first Chern class of the tangent bundle of $\M$ and assume that it is not torsion. Let $\k0\geq 1$ be the index of $(\M,\J)$,
$\Hil(z)$ the Hilbert polynomial, and $\deg(\Hil)$ its degree.
Then 
\begin{enumerate}
 \item\label{1a} $\Hil(0)=\td(\M)=N_0$;\\
 \item\label{3a} $\Hil(z)=(-1)^n \Hil(-\k0-z)\;\;\;$ for every $\;\;z\in \C$;\\
 \item\label{4a} $\deg(\Hil)\equiv n \mod 2$.
\end{enumerate}

\end{prop}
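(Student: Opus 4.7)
The plan is to prove the three claims in order. Part (1) is the easy part: evaluating formula \eqref{HAT} at $k=0$ immediately gives $\Hil(0) = T_n[\M] = \td(\M)$, and Corollary \ref{todd genus comp} identifies $\td(\M)$ with $N_0$.

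Part (2), the reciprocity law, is the core of the proposition and I would derive it from Proposition \ref{symmetries}. Pick a representative $\eta_0 \in H^2(\M;\Z)$ with $\pi(\cc_1) = \k0\, \pi(\eta_0)$, so $\cc_1 = \k0 \eta_0 + \tau$ for some torsion class $\tau$. By Lemma \ref{line admissible} the line bundle $\e{\eta_0}$ is admissible, hence so is its $k$-th tensor power $\e{k\eta_0}$ for every $k \in \Z$; thus there exists an equivariant class $\taut \in H_{S^1}^2(\M;\Z)$ with $r_H(\taut) = k\eta_0$. Applying \eqref{index symmetry} of Proposition \ref{symmetries} and using Lemma \ref{index torsion independent} to absorb the torsion part $\tau$, I get
\begin{equation*}
\Hil(k) = \ind(\e{k\eta_0}) = (-1)^n \ind(\e{-k\eta_0 - \cc_1}) = (-1)^n \ind(\e{-(k+\k0)\eta_0}) = (-1)^n \Hil(-\k0 - k).
\end{equation*}
This identity holds for every integer $k$; since both $\Hil(z)$ and $(-1)^n \Hil(-\k0 - z)$ are polynomials in $z$ that agree on $\Z$, they agree on all of $\C$.

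Part (3) is then a direct consequence of (2): if $d = \deg(\Hil)$ and $a_d \neq 0$ is its leading coefficient, comparing the leading terms of both sides of $\Hil(z) = (-1)^n \Hil(-\k0 - z)$ yields $a_d = (-1)^{n+d} a_d$, so $n + d$ must be even, i.e.\ $d \equiv n \pmod 2$.

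The only subtle point in the argument is the use of Proposition \ref{symmetries}, which requires $k\eta_0$ to admit an equivariant extension for every $k \in \Z$; admissibility of $\e{\eta_0}$ via Lemma \ref{line admissible} takes care of this, and Lemma \ref{index torsion independent} is needed to replace the torsion-corrupted class $-k\eta_0 - \cc_1$ by the clean multiple $-(k+\k0)\eta_0$ without changing the non-equivariant index. Everything else is straightforward bookkeeping.
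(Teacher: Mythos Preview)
Your proof is correct and follows essentially the same route as the paper's own argument: part (1) via the definition and Corollary \ref{todd genus comp}, part (2) via admissibility from Lemma \ref{line admissible} combined with Proposition \ref{symmetries} and the polynomial-agreeing-on-$\Z$ trick, and part (3) by comparing leading coefficients. The only cosmetic difference is that you make the use of Lemma \ref{index torsion independent} explicit to pass from $-k\eta_0-\cc_1$ to $-(k+\k0)\eta_0$, whereas the paper leaves this step implicit.
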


\begin{rmk}\label{H Ham}
By Lemma \ref{N0 1} and Proposition \ref{properties P} \eqref{1a}, note that if $(M,\omega)$ is a compact symplectic manifold supporting
a Hamiltonian $S^1$-action with isolated fixed points, then the Hilbert polynomial $\Hil(z)$ can never be identically zero.
\end{rmk}

\begin{rmk}\label{properties Ch}
Proposition \ref{properties P} \eqref{4a} implies that if there exists $k$ such that $a_{n-2h}=0$ for every $h=0,\ldots,k$, then
$a_{n-2h-1}=0$ for every $h=0,\ldots,k$.
\end{rmk}

\begin{proof}
Property \eqref{1a} follows from the definition of $\Hil(z)$ and Corollary \ref{todd genus comp}. 
By Lemma \ref{line admissible}, every line bundle $\LL$ such that $\pi(\cc_1(\LL))=k\,\pi(\eta_0)$ is admissible. So from
Proposition \ref{symmetries} we have that for all $k\in \Z$
$$
\Hil(k)=\ind(\e{\,k\, \eta_0})=(-1)^n \ind(\e{((-k-\k0) \eta_0)})=(-1)^n\Hil(-\k0-k)\,,
$$
and \eqref{3a} follows from observing that the polynomial given by $Q(z)=\Hil(z)-(-1)^n \Hil(-\k0-z)$ is zero for all $k\in \Z$, hence it must be identically zero.

In order to prove \eqref{4a} it is sufficient to notice that, if $\Hil(z)=\sum_{j=0}^ma_mz^m $, with $m=\deg(\Hil)$, from \eqref{3a} it follows that
$a_m=(-1)^{m+n}a_m$.
\end{proof}
Before proceeding with the main results of the section, we introduce some terminology that will be used in the discussion of the position of the roots of
$\Hil(z)$.
\begin{defin}\label{def: RVGo}
Fix a positive integer $k$.
\begin{itemize}
\item[1)] We denote by $\mathcal{T}_k$ the family of polynomials in $\R[z]$ that can be written as $C(z)\prod_{j=1}^{k-1}(z+j)$, where
  $C(z)\in \R[z]$ has all its roots on the line $l_{k}=\{x+\mathrm{i}y\in \C\mid x=-\frac{k}{2}\}$. 
 \item[2)] We define $\mathcal{S}_{k}$ to be the subset of the complex plane given by $$\mathcal{S}_{k}= \{x+\mathrm{i}y\in \C \mid -k  <x<0\}$$ 
 and we refer to it as the \emph{canonical strip} (centred at $-\frac{k}{2}$).
\item[3)] The subset of the complex plane $\mathcal{C}_{k}= \{z=x+\mathrm{i}y\in \C \mid y=0\;\;\mbox{or}\;\; x=-\frac{k}{2}\}$ is called the \emph{cross} at $-\frac{k}{2}$. 
\end{itemize}
\end{defin}

The terminology in 1) and 2) is inspired respectively by \cite{RV} and \cite{Go}. Indeed, in the beautiful note \cite{RV}, Rodriguez-Villegas analyses conditions which ensure a polynomial $H(z)\in \R[z]$ to belong to $\mathcal{T}_{k}$, for some $k\in \Z_{>0}$. 
In Sect.\;\ref{sec: generating fct} and Section \ref{sec: values k0} we explore connections 
among our results and those in \cite{RV}: we study under which conditions $\Hil(z)$ belongs to $\mathcal{T}_{\k0}$, for certain values of $\k0$.
In \cite{Go}, Golyshev analyses the position of the roots of the Hilbert polynomial of a Fano variety and a variety of general type. In particular, after adapting
his terminology to ours, he asks under which conditions all
the zeros of $\Hil(z)$ belong to the canonical strip $\mathcal{S}_{\k0}$.
In Section \ref{sec: values k0} we will study the position of the roots of $\Hil(z)$ in terms of inequalities in the Chern numbers and of $\k0$, when $\k0\geq n-2$ (see Remarks \ref{pos roots n+1}, \ref{pos roots n} and Corollaries \ref{pos roots n-1} and \ref{pos roots n-2}).

The next corollary is a straightforward consequence of Proposition \ref{properties P}. 
\begin{corollary}\label{property roots} 
Let $\ac$ be an $S^1$-space. Let $\k0\geq 1$ be the index of $(\M,\J)$, and
assume that the Hilbert polynomial $\Hil(z)$ is of positive degree $\deg(\Hil)>0$. If at least $\deg(\Hil)-3$ roots of $\Hil(z)$, counted with
multiplicity, belong to $\mathcal{C}_{\k0}$, then all the roots of $\Hil(z)$ belong to $\mathcal{C}_{\k0}$. In particular, if $n\leq 3$, then all the roots of $\Hil(z)$ belong to $\mathcal{C}_{\k0}$.
\end{corollary}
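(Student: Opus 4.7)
The plan is to exploit two symmetries of $\Hil(z)$ and show that any root lying off the cross $\mathcal{C}_{\k0}$ forces the existence of three further distinct roots also lying off $\mathcal{C}_{\k0}$, so that the number of roots off the cross is always a multiple of four.

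First, I would observe that by \eqref{Hilbert pol} the coefficients of $\Hil(z)$ are rational, hence real, so complex conjugation $z \mapsto \bar{z}$ permutes the roots of $\Hil(z)$ and preserves their multiplicities. Second, Proposition \ref{properties P}\,\eqref{3a} gives the reciprocity $\Hil(z) = (-1)^n \Hil(-\k0 - z)$, so the involution $\sigma \colon z \mapsto -\k0 - z$ also permutes the roots with multiplicities. The point of the definition of $\mathcal{C}_{\k0}$ is that it is precisely the union of the fixed loci of these two involutions on $\C$: indeed $z = x + \mathrm{i} y$ lies in $\mathcal{C}_{\k0}$ iff $y = 0$ (fixed by conjugation) or $x = -\k0/2$ (fixed by $\sigma$).

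Next, I would take a root $r = x + \mathrm{i} y$ of $\Hil(z)$ lying off $\mathcal{C}_{\k0}$, i.e.\ with $y \neq 0$ \emph{and} $x \neq -\k0/2$. A direct case analysis shows that under these two strict inequalities the four points $r,\;\bar r,\;\sigma(r) = -\k0 - x - \mathrm{i}y,\;\sigma(\bar r) = -\k0 - x + \mathrm{i}y$ are pairwise distinct and all lie off $\mathcal{C}_{\k0}$ (e.g.\ $r = \sigma(\bar r)$ would force $2x = -\k0$, and $r = \bar r$ would force $y = 0$). By the two symmetries each appears as a root of $\Hil(z)$ with the same multiplicity as $r$. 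Hence the total number of roots of $\Hil(z)$ lying off $\mathcal{C}_{\k0}$, counted with multiplicity, is divisible by $4$. The hypothesis says this number is at most $3$, so it must be $0$, proving the main assertion.

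For the second assertion, I would note that by \eqref{Hilbert pol} we always have $\deg(\Hil)\leq n$, so if $n \leq 3$ then $\deg(\Hil) - 3 \leq 0$ and the condition ``at least $\deg(\Hil) - 3$ roots lie in $\mathcal{C}_{\k0}$'' is vacuous; the first part then applies. No real obstacle arises in this proof; the only routine point is the distinctness of the orbit $\{r, \bar r, \sigma(r), \sigma(\bar r)\}$, which reduces to a short case analysis on which pair of involutions can identify two of its members.
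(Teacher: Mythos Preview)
Your proof is correct and follows essentially the same argument as the paper's own proof: both exploit the two commuting involutions $z\mapsto \bar z$ and $z\mapsto -\k0-z$ (from real coefficients and Proposition~\ref{properties P}\,\eqref{3a}) to show that a root off $\mathcal{C}_{\k0}$ has an orbit of four distinct roots off $\mathcal{C}_{\k0}$, contradicting the hypothesis. Your phrasing in terms of the orbit structure and divisibility by four is slightly more systematic, but the content is the same.
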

\begin{proof}
Let $h$ be the number of roots, counted with multiplicity, which belong to $\mathcal{C}_{\k0}$; by assumption $h\geq \deg(\Hil)-3$. 
Suppose that one of the remaining $\deg(\Hil)-h$ roots, $z_0\in \C$, does not belong to $\mathcal{C}_{\k0}$. Then, by Proposition \ref{properties P} \eqref{3a}, 
we have that $z_1=-\k0-z_0$ is also a root, and since $\Hil(z)\in \R[z]$, the complex conjugates $z_2=\overline{z_0}$ and $z_3=-\k0-\overline{z_0}$ are also roots.
Since $z_0\notin \mathcal{C}_{\k0}$, it follows that $z_i\neq z_j$ for $i\neq j$, and $z_i\notin \mathcal{C}_{\k0}$ for $i=0,1,2,3$, implying that $\Hil(z)$ has at least $h+4\geq \deg(\Hil)+1$ roots, which is impossible
since we are assuming $\Hil(z)$ to be non identically zero.

\end{proof}

We are now ready to prove Theorem \ref{main theorem}.
\begin{proof}[Proof of Theorem \ref{main theorem}]
Choose $\eta_0$ and $\tau$ in $H^2(\M;\Z)$ such that $\cc_1=\k0 \eta_0 + \tau$,
where $\tau\in \tor(H^2(\M;\Z))$. By Lemma \ref{line admissible}, both $\eta_0$ and $\tau$ admit equivariant extensions
$\eta_0^{S^1}$ and $\tau^{S^1}$ in $H_{S^1}^2(\M;\Z)$. Since $\tau^{S^1}(p)$ does not depend on $p\in \M^{S^1}$ (see \eqref{trivial constant}),
it follows that $\ce(p)=\k0 \eta_0^{S^1}(p)+c$ for all $p\in \M^{S^1}$, for some $c\in \Z$.
Thus by Proposition \ref{eq index zero} we have that
\begin{equation}\label{key equation}
\ind_{S^1}(\e{\,k\eta_0^{S^1}})=0 \quad \mbox{for all}\quad k=-1,-2,\ldots,-\k0+1\,,
\end{equation}
and by combining \eqref{K commutes} and \eqref{key equation} we have that
$$
\Hil(k)=\ind(\e{\,k\eta_0})=r_K(\ind_{S^1}(\e{\,k\eta_0^{S^1}}))=0 \quad \mbox{for all}\quad k=-1,-2,\ldots,-\k0+1\,,
$$
and \eqref{H=0 even} follows.

In order to prove \eqref{bound k0}, observe that by \eqref{H=0 even} the set of roots of $\Hil(z)$ contains $C_0=\{-1,-2,\ldots,-\k0+1\}$, thus if $\Hil(z)\not\equiv 0$ we must have that $|C_0|=\k0-1\leq \deg(\Hil)\leq n$.
\end{proof}
Note that by Proposition \ref{properties P}, $\Hil(z)$
has a different behaviour depending on whether $N_0=0$ or not. 
\begin{corollary}\label{bound on k0}
Let $\ac$ be an $S^1$-space, and assume that $\cc_1$ is not torsion. Let $\k0\geq 1$ be the index of $\ac$ and
$\Hil(z)$ the Hilbert polynomial. Let
$N_0$ be the number of fixed points with $0$ negative weights.
Then:
\begin{itemize}
 \item[({\bf i})] If $N_0\neq 0\;\;\;$ then  $\;\;\;1\leq \k0\leq \deg(\Hil)+1\leq n+1$;
 \item[({\bf ii})] If $N_0=0\;\;\;$ then either $\;\;\deg(\Hil)>0$ and $1\leq \k0\leq \deg(\Hil)-1\leq n-1$, or
 $\Hil(z)\equiv 0
 $, the latter being equivalent to $\cc_1^h\, T_{n-h}[\M]=0 \quad \mbox{for every}\quad h=0,\ldots,n\,$.
\end{itemize}
\end{corollary}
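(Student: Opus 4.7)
The plan is to deduce both parts directly from Proposition \ref{properties P} and Theorem \ref{main theorem}, the key insight being that the value of $N_0$ controls whether $z=0$ (and hence, by reciprocity, $z=-\k0$) contribute extra roots of $\Hil(z)$ beyond those already guaranteed by Theorem \ref{main theorem}.

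For (\textbf{i}), if $N_0 \neq 0$ then by Proposition \ref{properties P}\eqref{1a} we have $\Hil(0) = N_0 \neq 0$, so $\Hil(z) \not\equiv 0$. The inequalities $1 \leq \k0 \leq \deg(\Hil)+1 \leq n+1$ are then immediate from the bound \eqref{bound k0} in Theorem \ref{main theorem} together with the hypothesis $\k0 \geq 1$.

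For (\textbf{ii}), suppose $N_0 = 0$. Then Proposition \ref{properties P}\eqref{1a} gives $\Hil(0)=0$, and applying the reciprocity law \eqref{3a} with $z = 0$ yields $\Hil(-\k0) = (-1)^n \Hil(0) = 0$. Combined with the $\k0-1$ zeros at $-1,-2,\ldots,-\k0+1$ coming from Theorem \ref{main theorem}, this shows that $\Hil(z)$ has at least $\k0+1$ distinct integer roots in $\{-\k0,-\k0+1,\ldots,-1,0\}$. If $\Hil(z) \not\equiv 0$ this forces $\deg(\Hil) \geq \k0+1$, that is $\k0 \leq \deg(\Hil)-1$; since $\k0 \geq 1$ we then have $\deg(\Hil) \geq 2 > 0$, and the upper bound $\deg(\Hil)-1 \leq n-1$ follows from $\deg(\Hil) \leq n$. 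If instead $\Hil(z) \equiv 0$, reading off the coefficients via \eqref{Hilbert pol} gives $\frac{1}{\k0^h h!}\cc_1^h T_{n-h}[\M] = 0$ for every $h=0,\ldots,n$, equivalently $\cc_1^h T_{n-h}[\M] = 0$ for all such $h$ (and conversely these vanishings obviously force $\Hil(z) \equiv 0$).

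No serious obstacle is anticipated: the statement is essentially a careful accounting of how many forced roots $\Hil(z)$ possesses once we know whether $N_0$ vanishes, and both ingredients (the string of negative-integer roots, and the functional equation that propagates a zero at $0$ to a zero at $-\k0$) are already established upstream.
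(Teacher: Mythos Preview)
Your argument is correct and follows essentially the same route as the paper's: use $\Hil(0)=N_0$ to decide whether $\Hil$ vanishes identically, and in case (\textbf{ii}) count the forced roots $\{0,-1,\ldots,-\k0\}$ to bound the degree from below. One small technicality you glossed over: Theorem \ref{main theorem} is stated under the hypothesis $\k0\geq 2$, so strictly speaking you cannot invoke \eqref{bound k0} or \eqref{H=0 even} when $\k0=1$. The paper handles this by noting that for $\k0=1$ the inequalities in (\textbf{i}) are trivial (since $\Hil\not\equiv 0$ gives $\deg(\Hil)\geq 0$, and $\deg(\Hil)\leq n$ always), and in (\textbf{ii}) the list \eqref{H=0 even} is empty anyway, so the two roots $0$ and $-\k0=-1$ coming from Proposition \ref{properties P} already give the needed $\k0+1=2$ distinct zeros.
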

\begin{proof}
If $\k0=1$, the inequalities in ({\bf i}) clearly hold. Assume $\k0\geq 2$.
Observe that if $N_0\neq 0$ then Proposition \ref{properties P} \eqref{1a} implies that $\Hil(z)$ is not identically zero, and ({\bf i}) follows from \eqref{bound k0}. 

Suppose that $N_0=0$. Observe that in this case we must have\footnote{Indeed, in Section \ref{examples} it will be proved that $N_0=0$ implies $n\geq 3$, see Prop.\ \ref{dim 4}.} $n\geq 2$. Indeed, for $n=1$ it is impossible to have $N_0=0$, since by \eqref{NiN} 
we would have $N_0=N_1=0$, and hence $|\M^{S^1}|=0$. 
By Proposition \ref{properties P} \eqref{1a} and \eqref{3a}, and by \eqref{H=0 even}, we have that 
the set of roots of 
$\Hil(z)$ contains $C'_0=\{0,-1,\ldots,-\k0\}$. It follows that, if $\Hil(z)$ is not identically zero, then $|C'_0|=\k0+1\leq \deg(\Hil)\leq n$.
\end{proof}
A consequence of Corollary \ref{bound on k0} in the symplectic category is the following:
\begin{corollary}\label{bound on k0 s}
Let $(\M,\omega)$ be a compact, connected symplectic manifold, and $\k0$ the associated index. Then:
\begin{itemize}
\item[({\bf i'})] If $(\M,\omega)$ can be endowed with a Hamiltonian $S^1$-action with isolated fixed points, then $\;\;\;1\leq \k0\leq \deg(\Hil)+1\leq n+1$;
\item[({\bf ii'})] If $(\M,\omega)$ can be endowed with a non-Hamiltonian $S^1$-action with isolated fixed points, then there are three possibilities:
\begin{itemize}
\item[(a)] $\k0=0$, i.e.\ $\cc_1$ is torsion;
\item[(b)] $\k0>0$ and $\Hil\equiv 0$, the latter being equivalent to $\cc_1^h\, T_{n-h}[\M]=0 \quad \mbox{for every}\quad h=0,\ldots,n\,$;
\item[(c)] $\k0>0$, $\deg(\Hil)>0$ and $1\leq \k0\leq \deg(\Hil)-1\leq n-1$.
\end{itemize}
\end{itemize}
\end{corollary}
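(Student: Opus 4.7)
The plan is to reduce to the almost complex statement in Corollary \ref{bound on k0} by means of the symplectic-specific results proved earlier, namely Lemma \ref{N0 1} and Lemma \ref{Lemma:c1 not torsion}. The key point is that, once a compatible almost complex structure $\J$ is chosen, the triple $(\M,\J,S^1)$ becomes an $S^1$-space in the sense of the paper, and the Chern numbers, the Hilbert polynomial $\Hil(z)$ and the index $\k0$ depend only on the symplectic data (since the space of compatible almost complex structures is contractible). Thus Corollary \ref{bound on k0} applies and its hypotheses on $N_0$ and on $\cc_1$ translate directly into hypotheses on the action being Hamiltonian or not.

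For part \textbf{(i')}, I would argue that if the $S^1$-action is Hamiltonian with isolated fixed points, then Lemma \ref{N0 1} gives $N_0 = 1\neq 0$, while Lemma \ref{Lemma:c1 not torsion} guarantees that $\cc_1$ is not torsion, hence $\k0 \geq 1$. We are then exactly in the setting of case \textbf{(i)} of Corollary \ref{bound on k0}, which yields the chain of inequalities $1 \leq \k0 \leq \deg(\Hil) + 1 \leq n+1$.

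For part \textbf{(ii')}, by Lemma \ref{N0 1} a non-Hamiltonian action with isolated fixed points forces $N_0 = 0$. I would then split according to whether $\cc_1$ is torsion: if $\cc_1$ is torsion then $\k0 = 0$ by definition of the index, giving case (a); if $\cc_1$ is not torsion then $\k0 \geq 1$ and we are in the hypothesis of case \textbf{(ii)} of Corollary \ref{bound on k0}, which gives either $\Hil(z) \equiv 0$ (case (b)) or $\deg(\Hil) > 0$ with $1 \leq \k0 \leq \deg(\Hil)-1 \leq n-1$ (case (c)). To finish case (b), I would observe that the equivalence between $\Hil(z) \equiv 0$ and the vanishing of all the combinations $\cc_1^h T_{n-h}[\M]$, $h=0,\ldots,n$, is immediate from the explicit expression \eqref{Hilbert pol} of the coefficients $a_h$ of $\Hil(z)$: since $\k0 \neq 0$, $a_h = 0$ for every $h$ is equivalent to $\cc_1^h T_{n-h}[\M] = 0$ for every $h$.

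I do not expect any serious obstacle: the statement is essentially a translation of Corollary \ref{bound on k0} using the two symplectic lemmas that pin down the value of $N_0$ and the (non-)torsion behaviour of $\cc_1$. The only point that requires a moment of care is the equivalence stated in case (b), and this is a direct consequence of the formula for the coefficients $a_h$ displayed in \eqref{ah}.
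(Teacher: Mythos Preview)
Your proposal is correct and follows essentially the same route as the paper: both reduce to Corollary \ref{bound on k0} via Lemma \ref{N0 1} (to determine $N_0$) and Lemma \ref{Lemma:c1 not torsion} (to handle torsion of $\cc_1$ in the Hamiltonian case). Your write-up is simply more explicit than the paper's, which compresses the argument into two sentences.
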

\begin{proof}
In order to prove ({\bf i'}) it is sufficient to notice that, by Lemma \ref{Lemma:c1 not torsion}, we must have $\k0>0$. Then the claim follows from Lemma \ref{N0 1} and Corollary \ref{bound on k0} ({\bf i}).
The only non trivial thing to prove in ({\bf ii'}) is the upper bound on the index in (c). But this follows by combining Lemma \ref{N0 1} and Corollary \ref{bound on k0} ({\bf ii}).
\end{proof}
Corollary \ref{minimal chern ham} follows from Corollary \ref{bound on k0} ({\bf i'})  and the discussion in Remark \ref{mcn}.
\begin{rmk}\label{rmk 1}
(a') In the $6$-dimensional example $(\widetilde{M},\omega)$ constructed by Tolman \cite{T3}, the image of $\cc_1^{S^1}(\widetilde{\M})$ under the restriction map $i^*\colon H^2_{S^1}(\widetilde{M};\Z)\to H^2_{S^1}(\widetilde{M}^{S^1};\Z)$
is identically zero. Such restriction is zero when, for instance, $\cc_1$ is torsion in $H^2(\M;\Z)$ (see \cite[Lemma 4.1]{GPS}). However, to the best of the author's knowledge,
it is still not known whether
$\cc_1(\widetilde{M})$ is torsion. 
\\
(b') Note that, under the hypothesis of ({\bf ii'}), if $\k0\geq n$ then $\Hil\equiv 0$.
\end{rmk}
\begin{rmk}[{\bf Comparison with Hattori's results}]\label{hattori rmk}
In \cite{Ha} Hattori analyses inequalities which are similar to those in Corollary \ref{bound on k0}, provided that $\ac$ is an $S^1$-space endowed with a suitable quasi-ample line bundle, defined as follows.
An equivariant line bundle $\LL^{S^1}$ is \emph{fine} if the restrictions of $\LL^{S^1}$ at the fixed points are mutually distinct
$S^1$-modules, i.e.\; if $\LL^{S^1}(p_i)= t^{a_i}\neq t^{a_j}=\LL^{S^1}(p_j)$
for every $p_i\neq p_j $ in $\M^{S^1}$.
It is \emph{quasi-ample} if it is fine and its first (non equivariant) Chern class satisfies $ \cc_1(\LL^{S^1})^n[\M]\neq 0$.
In \cite[Theorem 5.1]{Ha} the author proves that if $\ac$ possesses a quasi ample line bundle $\LL^{S^1}$,
and its first (non equivariant) Chern class satisfies $\cc_1=k\, \cc_1(\LL^{S^1})$ for some
$k\in \Z_{>0}$, then $k\leq n+1 \leq \chi(\M)$. 
Thus, if the equivariant line bundle $\eta_0^{S^1}$ defined in the proof of Theorem \ref{main theorem} is quasi-ample, Hattori's results
imply that $\k0\leq n+1\leq \chi(\M)$. 
Observe that in Corollary \ref{bound on k0} ({\bf i}), we do not require the existence of a quasi-ample line bundle; we assume instead $N_0\neq 0$.
We also remark that if $\cc_1^n[\M]=0$ then $\eta_0^{S^1}$ cannot be quasi-ample; on the other hand, if $ \cc_1^n[\M]=0$ and $N_0\neq 0$, Corollary \ref{bound on k0} ({\bf i}) gives a better upper bound on $\k0$, since the vanishing of $ \cc_1^n[\M]$ implies that $\deg(\Hil)\leq n-2$, thus giving $\k0\leq n-1$ (see Remark \ref{c1n=0}).
\end{rmk}

\begin{rmk}\label{c1n=0}
From \eqref{ah}, Proposition \ref{properties P} \eqref{4a} and Corollary \ref{bound on k0} it follows that if $\k0\geq 1$:
\begin{itemize}
\item If $ \cc_1^n[\M]=0$ and $N_0\neq 0$, then $\deg(\Hil(z))\leq n-2$ and $ \k0\leq n-1$;\\
\item If $\cc_1^n[\M]=0$ and $N_0=0$, then $\k0\leq n-3$ or $\Hil(z)\equiv 0$.
\end{itemize}
Similarly,
\begin{itemize}
\item If $\cc_1^n[\M]=\cc_1^{n-2}\cc_2[\M]=0$ and $N_0\neq 0$, then $\deg(\Hil(z))\leq n-4$ and $ \k0\leq n-3$;\\
\item If $\cc_1^n[\M]=\cc_1^{n-2}\cc_2[\M]=0$ and $N_0=0$, then $\k0\leq n-5$ or $\Hil(z)\equiv 0$.
\end{itemize}
\end{rmk}

\begin{rmk}\label{nec non Ham}
Observe that by Corollary \ref{bound on k0 s} ({\bf ii'}) and \eqref{ah} it follows that if $(\M,\omega)$ supports a non-Hamiltonian action and $\k0\geq n$, then
$\cc_1^n[\M]=\cc_1^{n-2}\cc_2[\M]=0$. In Theorem \ref{nHam-char} we strengthen this fact and
 prove that for $(\M,\omega,S^1)$ with $\k0\geq n$, the vanishing of one of these Chern numbers is indeed equivalent to having a non-Hamiltonian
action. Moreover, if $\k0=n-2$ or $\k0=n-1$, then a suitable linear combination of those Chern number is zero if and only if the action is non-Hamiltonian. 
\end{rmk} 
As we have already observed before (Lemma \ref{Lemma:c1 not torsion}), a compact symplectic manifold with $\cc_1$ torsion cannot support any Hamiltonian circle action.
If $\cc_1$ is not torsion, a criterion to conclude the same is given by the following
\begin{corollary}\label{cor non ham 2}
Let $(\M,\omega)$ be a compact, connected symplectic manifold of dimension $2n$ with index $\k0>0$.
If 
$$
\cc_1^h\,T_{n-h}[\M]=0 \quad \mbox{for all}\quad h\geq 2\k0-n+2\Big\lfloor\frac{n-\k0}{2}\Big\rfloor
$$
then the manifold cannot support any Hamiltonian circle action with isolated fixed points. 
\end{corollary}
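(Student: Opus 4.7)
The plan is to argue by contradiction: assuming the existence of a Hamiltonian $S^1$-action with isolated fixed points, the Chern-number vanishing hypothesis will force the Hilbert polynomial $\Hil(z)$ to vanish identically, which will then contradict $\Hil(0) = N_0 = 1$. So suppose such an action exists and fix any compatible almost complex structure $\J$; then $(\M,\J,S^1)$ is an $S^1$-space, and since $\k0 > 0$ the first Chern class is not torsion, so the Hilbert polynomial $\Hil(z) = \sum_{h=0}^n a_h z^h$ from \eqref{Hilbert pol} is defined, with $a_h = \cc_1^h\,T_{n-h}[\M]/(\k0^h h!)$.

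First, I would simplify the threshold $h_0 := 2\k0 - n + 2\lfloor (n-\k0)/2\rfloor$ by splitting on the parity of $n - \k0$: a direct check gives $h_0 = \k0$ when $n-\k0$ is even and $h_0 = \k0 - 1$ when $n - \k0$ is odd, so in every case $h_0 \leq \k0$ and $h_0 \equiv n \pmod 2$. The hypothesis $\cc_1^h\,T_{n-h}[\M] = 0$ for all $h \geq h_0$ then reads $a_h = 0$ for $h \geq h_0$, so $\deg(\Hil) \leq h_0 - 1$ if $\Hil \not\equiv 0$. But Proposition \ref{properties P}\eqref{4a} gives $\deg(\Hil) \equiv n \equiv h_0 \pmod 2$, so actually $\deg(\Hil) \leq h_0 - 2 \leq \k0 - 2$.

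Now Theorem \ref{main theorem} says $\k0 \leq \deg(\Hil) + 1$ whenever $\Hil \not\equiv 0$, forcing $\k0 \leq \k0 - 1$, a contradiction. Hence $\Hil \equiv 0$. On the other hand, Lemma \ref{N0 1} gives $N_0 = 1$ for a Hamiltonian action, and Proposition \ref{properties P}\eqref{1a} then yields $\Hil(0) = N_0 = 1 \neq 0$, contradicting $\Hil \equiv 0$. There is no real obstacle here: the argument is essentially a parity bookkeeping combined with the bound from Theorem \ref{main theorem}. The only mildly delicate point is recognising that the precise expression defining $h_0$ is engineered so that the parity improvement coming from Proposition \ref{properties P}\eqref{4a} gains exactly the one unit needed to conflict with the bound $\k0 \leq \deg(\Hil)+1$.
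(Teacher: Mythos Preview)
Your proof is correct and follows essentially the same route as the paper's: both compute the threshold $h_0$ in terms of the parity of $n-\k0$, use Proposition~\ref{properties P}\eqref{4a} to drop the degree bound by one more, and then invoke the inequality $\k0\leq\deg(\Hil)+1$ from Theorem~\ref{main theorem} together with $\Hil(0)=N_0=1$ (Remark~\ref{H Ham}) to reach a contradiction. One small technical point: Theorem~\ref{main theorem} is stated under the hypothesis $\k0\geq 2$, so your sentence ``Theorem~\ref{main theorem} says $\k0\leq\deg(\Hil)+1$ whenever $\Hil\not\equiv 0$'' formally needs that assumption; however for $\k0=1$ your bound $\deg(\Hil)\leq\k0-2=-1$ already forces $\Hil\equiv 0$ directly, so the argument still goes through (and the paper's proof is equally informal on this edge case).
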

\begin{proof}
First of all observe that $\;2\k0-n+2\Big\lfloor\frac{n-\k0}{2}\Big\rfloor\geq \k0-1$, and equality holds if and only if $n\not\equiv \k0\mod{2}$.
By definition of Hilbert polynomial (see \eqref{ah}),
having
$\cc_1^h\, T_{n-h}[\M]=0$ for all $h\geq0$ implies that $\Hil\equiv 0$. If $\k0\geq 2$, having  $\cc_1^h\, T_{n-h}[\M]=0$ for all $h\geq \k0-1$ implies
that $\deg(\Hil)\leq \k0-2$.
However, as a consequence of Theorem \ref{main theorem}, $\Hil(z)$ has at least $\k0-1$ zeroes, so $\cc_1^h\, T_{n-h}[\M]=0$ for all $h\geq\k0-1$ implies that
$\Hil\equiv 0$. 

By Remark \ref{H Ham}, the Hilbert polynomial 
of a symplectic manifold with a Hamiltonian $S^1$-action and isolated fixed points can never be identically zero, and the corollary follows from the discussion above for $n\not\equiv \k0\mod{2}$.
If $n\equiv \k0\mod{2}$ then, by Proposition \ref{properties P} \eqref{4a} we have that $\deg(\Hil)\leq \k0-1$ implies $\deg(\Hil)\leq \k0-2$, and the conclusion holds in this case too.
\end{proof}

Another consequence of Theorem \ref{main theorem} is the following
\begin{corollary}\label{extra root -k02}
Let $\ac$ be an $S^1$-space, and assume its index $\k0$ is non-zero. Let $\Hil(z)$ be the Hilbert polynomial. 
\begin{equation}\label{extra root}
 \mbox{If} \quad n\equiv\k0\mod 2\quad \mbox{then} \;\;\;\Hil\Big(-\frac{\k0}{2}\Big)=0\,.
 \end{equation}
Moreover, if $\Hil(z)\not\equiv 0$ and $n\equiv \k0\equiv 0\mod 2$, then the multiplicity of the root $-\frac{\k0}{2}$ is at least $2$.
\end{corollary}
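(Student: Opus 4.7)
The plan is to exploit the functional equation \eqref{3a} from Proposition \ref{properties P}, namely $\Hil(z)=(-1)^n \Hil(-\k0-z)$, together with the zeros supplied by Theorem \ref{main theorem}. I would split into the two parity sub-cases of $n\equiv\k0\pmod 2$.

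\textbf{Case 1: $n$ and $\k0$ both odd.} Evaluating the reciprocity law at the (possibly half-integer) value $z=-\k0/2$ gives
\begin{equation*}
\Hil\Big(-\tfrac{\k0}{2}\Big)=(-1)^n\,\Hil\Big(-\k0+\tfrac{\k0}{2}\Big)=-\Hil\Big(-\tfrac{\k0}{2}\Big),
\end{equation*}
so $\Hil(-\k0/2)=0$.

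\textbf{Case 2: $n$ and $\k0$ both even.} Since $\k0\neq 0$ and is even, $\k0\geq 2$, so $-\k0/2$ is a negative integer and satisfies $-(\k0-1)\leq -\k0/2\leq -1$. Hence $-\k0/2\in\{-1,-2,\ldots,-\k0+1\}$, and Theorem \ref{main theorem} (equation \eqref{H=0 even}) yields $\Hil(-\k0/2)=0$. This establishes \eqref{extra root}.

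\textbf{The multiplicity claim.} Assume $n\equiv\k0\equiv 0\pmod 2$ and $\Hil(z)\not\equiv 0$. Introduce the polynomial $Q(w):=\Hil(w-\k0/2)$ obtained by centring $\Hil$ at the symmetry point $-\k0/2$. Under this substitution the reciprocity law \eqref{3a} reads
\begin{equation*}
Q(w)=\Hil\Big(w-\tfrac{\k0}{2}\Big)=(-1)^n\Hil\Big(-\k0-w+\tfrac{\k0}{2}\Big)=(-1)^nQ(-w)=Q(-w),
\end{equation*}
so $Q$ is an even polynomial and therefore contains only even powers of $w$. In particular its Taylor expansion at $0$ has the form $Q(w)=a_0+a_2 w^2+a_4 w^4+\cdots$, and we have already shown that $a_0=Q(0)=\Hil(-\k0/2)=0$. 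Consequently $w=0$ is a zero of $Q$ of multiplicity at least $2$, which translates back to $z=-\k0/2$ being a zero of $\Hil$ of multiplicity at least $2$. No step here is technically hard; the only thing one must be careful about is that in Case 2 the value $-\k0/2$ actually lies within the range of zeros guaranteed by Theorem \ref{main theorem}, which uses the standing hypothesis $\k0\neq 0$.
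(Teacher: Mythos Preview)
Your proof is correct, but it takes a different route from the paper's. The paper does not split into parity sub-cases; instead it introduces the quotient polynomial
\[
\widetilde{\Hil}(z)=\frac{\Hil(z)}{\prod_{j=1}^{\k0-1}(z+j)},
\]
which is genuinely a polynomial by Theorem~\ref{main theorem}, and checks that the reciprocity law for $\Hil$ translates into $\widetilde{\Hil}(-\k0-z)=(-1)^{n-\k0+1}\widetilde{\Hil}(z)$. When $n\equiv\k0\pmod 2$ this forces $\widetilde{\Hil}(-\k0/2)=0$, giving \eqref{extra root} in one stroke; the multiplicity claim then follows because, for $\k0$ even, $-\k0/2$ is already a root of the factored-out product $\prod_{j=1}^{\k0-1}(z+j)$ as well as of $\widetilde{\Hil}$.

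Your argument is more elementary in that it never introduces $\widetilde{\Hil}$: you get the odd case directly from the reciprocity of $\Hil$ itself, the even case from Theorem~\ref{main theorem}, and the multiplicity from the observation that the centered polynomial $Q(w)=\Hil(w-\k0/2)$ is even when $n$ is even. The paper's approach is slightly more uniform (no case split for the first assertion) and the auxiliary polynomial $\widetilde{\Hil}$ reappears implicitly in later computations (e.g.\ in the $\k0=n-1,n-2$ analyses), so isolating it here has some structural payoff. Your centering trick for the multiplicity is a clean alternative and arguably more transparent than tracking which factor contributes which root.
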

\begin{proof}
Observe that, if $\k0\geq 2$, by \eqref{H=0 even} we have that $\widetilde{\Hil}(z)=\displaystyle\frac{\Hil(z)}{\prod_{j=1}^{\k0-1}(z+j)}$
is a polynomial. The same conclusion follows if $\k0=1$ by setting the empty product to be $1$. Hence
 by Proposition \ref{properties P} \eqref{3a} we have that for all $\k0\geq 1$
\begin{equation}\label{H0 tilde}
\widetilde{\Hil}(-\k0-z)=\frac{\Hil(-\k0-z)}{\prod_{j=1}^{\k0-1}(-\k0-z+j)}=\frac{(-1)^n\Hil(z)}{(-1)^{\k0-1}\prod_{j=1}^{\k0-1}(z+j)}=(-1)^{n-\k0+1}\widetilde{\Hil}(z)\;.
\end{equation}
Hence if $n\equiv \k0\mod 2$, from \eqref{H0 tilde} it follows that $\widetilde{\Hil}(-\frac{\k0}{2})=0$, thus proving \eqref{extra root}.
Finally, if $\k0$ is even, then $-\frac{\k0}{2}\in \{-1,\ldots,-\k0+1\}\subset \Z$, hence it is a root of both $\prod_{j=1}^{\k0-1}(z+j)$ and $\widetilde{\Hil}(z)$.
\end{proof}

From Theorem \ref{main theorem} we also have the following refinement of Corollary \ref{property roots}, which concerns the position of the roots of $\Hil(z)$.
\begin{corollary}\label{position roots}
Let $\ac$, $\k0$ and $\Hil(z)$ be as in Theorem \ref{main theorem}, and assume that $\deg(\Hil)>0$. If $\k0\geq n-2$ then all the roots of $\Hil(z)$ belong to $\mathcal{C}_{\k0}$. 
\end{corollary}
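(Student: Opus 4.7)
The plan is to reduce the statement to Corollary \ref{property roots} by locating enough real roots of $\Hil(z)$ on the cross $\mathcal{C}_{\k0}$. Recall that $\mathcal{C}_{\k0}$ contains the entire real axis, so any \emph{real} root automatically lies on $\mathcal{C}_{\k0}$; hence the strategy is simply to count how many real roots are forced by Theorem \ref{main theorem}.

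First, I would dispose of the small-dimensional cases. If $n\leq 3$, the second assertion of Corollary \ref{property roots} already yields the conclusion, with nothing to prove. So one may assume $n\geq 4$, in which case the hypothesis $\k0\geq n-2$ implies $\k0\geq 2$, and Theorem \ref{main theorem} applies.

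Next, I would invoke Theorem \ref{main theorem} to deduce that the integers $-1,-2,\ldots,-(\k0-1)$ are all roots of $\Hil(z)$. These are $\k0-1$ real roots, and each of them trivially belongs to $\mathcal{C}_{\k0}$ (the real axis being contained in the cross). Combining $\deg(\Hil)\leq n$ with $\k0\geq n-2$ gives
\[
\k0-1\;\geq\;n-3\;\geq\;\deg(\Hil)-3,
\]
so the number of roots of $\Hil(z)$ known to lie on $\mathcal{C}_{\k0}$ is at least $\deg(\Hil)-3$. The hypothesis of Corollary \ref{property roots} is therefore satisfied, and that corollary lets us conclude that every root of $\Hil(z)$ lies in $\mathcal{C}_{\k0}$.

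There is essentially no obstacle here; the argument is a direct bookkeeping combination of the zero set produced by Theorem \ref{main theorem} with the pairing-up of non-real roots forced by the functional equation $\Hil(z)=(-1)^n\Hil(-\k0-z)$ that already underlies Corollary \ref{property roots}. The only point to be careful about is the boundary between the range covered by Corollary \ref{property roots} directly (small $n$) and the range requiring Theorem \ref{main theorem} ($\k0\geq 2$), which is handled by splitting into $n\leq 3$ and $n\geq 4$ as above.
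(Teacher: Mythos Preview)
Your argument is correct and is exactly the route the paper intends: the corollary is stated as an immediate refinement of Corollary \ref{property roots} via Theorem \ref{main theorem}, and your counting $\k0-1\geq n-3\geq \deg(\Hil)-3$ is precisely the bookkeeping that links the two. The case split $n\leq 3$ versus $n\geq 4$ is harmless but unnecessary, since the standing hypothesis ``as in Theorem \ref{main theorem}'' already forces $\k0\geq 2$, so Theorem \ref{main theorem} applies in all cases and the inequality $\k0-1\geq \deg(\Hil)-3$ feeds directly into Corollary \ref{property roots}.
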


The next corollary gives useful equations in the Chern numbers
depending on the index $\k0$ and the parity of $n-\k0$.
\begin{corollary}[{\bf Equations in the Chern numbers}]\label{cor equations chern numbers}
Let $\ac$ be as in Theorem \ref{main theorem}. Then 
\begin{equation}\label{equations chern numbers}
\sum_{h=0}^n \frac{1}{h!}\left( \frac{k}{\k0}\right)^h \cc_1^h\, T_{n-h}[\M]=0\quad \;\;\;\;\mbox{for all}\;\; k \in \{-1,-2,\ldots,-\k0+1\}\,.
\end{equation}
Moreover, if $n\equiv \k0\mod 2$ then
\begin{equation}\label{k02}
\sum_{h=0}^n \frac{(-1)^h}{2^h h!}\cc_1^h\, T_{n-h}[\M]=0\;,
\end{equation}
and if $n\equiv \k0 \equiv 0\mod 2$ then
\begin{equation}\label{k02 2}
\sum_{h=1}^n \frac{(-1)^{h-1}}{2^{h-1} (h-1)!}\cc_1^h\, T_{n-h}[\M]=0
\end{equation}
\end{corollary}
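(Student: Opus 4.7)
\emph{Plan.} The entire statement is essentially a translation of the vanishing theorems already established for $\Hil(z)$ into statements about the Chern numbers, using the explicit polynomial expression
\[
\Hil(z) \;=\; \sum_{h=0}^n \frac{\cc_1^h\, T_{n-h}[\M]}{\k0^h\, h!}\, z^h
\]
from \eqref{Hilbert pol}. Every equation $\Hil(z_0)=0$ (or $\Hil'(z_0)=0$) at a known root $z_0$ gives, after substituting and collecting powers, exactly a relation of the desired form.

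For \eqref{equations chern numbers}, I would substitute $z=k$ for each $k\in\{-1,-2,\ldots,-\k0+1\}$. By Theorem \ref{main theorem} these are roots of $\Hil$, so the corresponding $n+1$ equations read
\[
0 \;=\; \Hil(k) \;=\; \sum_{h=0}^n \frac{k^h}{\k0^h\, h!}\, \cc_1^h\, T_{n-h}[\M] \;=\; \sum_{h=0}^n \frac{1}{h!}\Big(\tfrac{k}{\k0}\Big)^{h} \cc_1^h\, T_{n-h}[\M],
\]
which is precisely \eqref{equations chern numbers}.

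For \eqref{k02}, I would invoke Corollary \ref{extra root -k02}: when $n\equiv \k0\pmod 2$, the point $z=-\k0/2$ is a root of $\Hil$. Plugging it into the polynomial expression, the factor $(-\k0/2)^h/\k0^h$ simplifies to $(-1)^h/2^h$, giving \eqref{k02} directly. When $\k0$ is also even, the same corollary tells me $-\k0/2$ is a root of multiplicity at least two, so $\Hil'(-\k0/2)=0$ as well. Differentiating term by term yields
\[
0 \;=\; \Hil'(-\k0/2) \;=\; \sum_{h=1}^n \frac{(-\k0/2)^{h-1}}{\k0^h(h-1)!}\, \cc_1^h\, T_{n-h}[\M] \;=\; \frac{1}{\k0}\sum_{h=1}^n \frac{(-1)^{h-1}}{2^{h-1}(h-1)!}\, \cc_1^h\, T_{n-h}[\M],
\]
and multiplying through by $\k0$ (which is nonzero under the hypothesis) gives \eqref{k02 2}.

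There is essentially no analytic obstacle here, since all the heavy lifting has already been done in Theorem \ref{main theorem} (the roots at negative integers) and Corollary \ref{extra root -k02} (the extra root and its multiplicity). The only real care needed is bookkeeping the signs and powers of $\k0$ in the substitutions, and, in the last step, making sure to cancel the overall factor $1/\k0$ before stating the final identity.
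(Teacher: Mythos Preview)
Your proposal is correct and follows essentially the same approach as the paper: both arguments reduce the corollary to substituting the known roots of $\Hil(z)$ (from Theorem~\ref{main theorem} and Corollary~\ref{extra root -k02}) into the explicit expression \eqref{Hilbert pol}, with \eqref{k02 2} coming from $\Hil'(-\k0/2)=0$. The only point you leave implicit is that Corollary~\ref{extra root -k02} states the multiplicity-two claim under the assumption $\Hil\not\equiv 0$, but since $\Hil\equiv 0$ makes \eqref{k02 2} trivial this is not a gap.
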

\begin{proof}
It is sufficient to notice that \eqref{k02 2} is equivalent to having $\Hil'(-\frac{\k0}{2})=0$, and the proof of Corollary \ref{cor equations chern numbers} is a direct consequence of Theorem \ref{main theorem}, Corollary \ref{extra root -k02} and
the definition of Hilbert polynomial
\eqref{Hilbert pol}.
\end{proof}
Thus the cases in which we can derive more restrictions on the Chern numbers are when $\k0$ is ``large'' (see Section \ref{sec: values k0}). 
\\$\;$

Before proceeding with the analysis of $\Hil(z)$ for different values of $\k0$, in the next subsection we study the properties of
the generating function of the sequence $\{\Hil(k)\}_{k\in \mathbb{N}}$.

\subsection{The generating function associated to the Hilbert polynomial}\label{sec: generating fct}

We recall that the \emph{generating function} of a sequence $\{b_k\}_{k\in \mathbb{N}}\subset \R$ is the formal power series
$$
P(t)=\sum_{k\geq 0} b_k t^k\,.
$$

The following result is due to Popoviciu \cite{Po} (see also 
\cite[Corollary 4.7]{St}).
\begin{prop}[Popoviciu]\label{Pop}
Let $H(z)$ be a polynomial of degree $m$ and $P(t)$ the generating function of the sequence $\{H(k)\}_{k\in \mathbb{N}}$. Then 
\begin{equation}\label{sym P}
P(t^{-1})=(-1)^{m+1}t^{k_0}P(t)  
 \end{equation}
  for some $k_0\in \Z$ if and only if $k_0\geq 1$,  
 \begin{equation}\label{zeros H}
  H(-1)=H(-2)=\cdots = H(-k_0+1)=0
 \end{equation}
 and 
 \begin{equation}\label{symmetry}
  H(k)=(-1)^m H(-k_0-k)\quad \mbox{for every}\quad k\in \Z\,.
 \end{equation}
\end{prop}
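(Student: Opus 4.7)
The plan is to leverage that $P(t)$, as the generating function of a polynomial sequence, is a rational function with a unique pole at $t=1$, and therefore admits two distinct Laurent expansions whose coefficients encode the values of $H$ at \emph{every} integer. Translating the functional equation \eqref{sym P} into compatibility between these two expansions will directly produce both \eqref{zeros H} and \eqref{symmetry}.

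The key preliminary step is the following identity. Expanding $H(z)=\sum_{i=0}^{m}u_i\binom{z+m-i}{m}$ in the standard basis of shifted binomials and using $\sum_{k\geq 0}\binom{k+r}{r}t^k=(1-t)^{-r-1}$, one obtains a unique polynomial $U(t)=\sum_{i=0}^{m} u_i t^i$ of degree at most $m$ with $P(t)=U(t)/(1-t)^{m+1}$ as rational functions. Expanding this same rational function instead in descending powers of $t$ via $(1-t)^{-m-1}=(-1)^{m+1}\sum_{j\geq 0}\binom{m+j}{m}t^{-m-1-j}$, multiplying by $U(t)$, and identifying the coefficient of $t^{-k}$ through the polynomial identity $\binom{-n}{m}=(-1)^m\binom{n+m-1}{m}$ yields the second Laurent expansion
\[
P(t)\;=\;-\sum_{k\geq 1} H(-k)\,t^{-k}
\]
in powers of $t^{-1}$, valid at $t=\infty$. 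Deriving this second expansion cleanly, with its signs and index shifts, is the main technical step; everything afterwards is formal.

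Substituting $t\mapsto t^{-1}$ swaps the two expansions of $P$, so $P(t^{-1})$ has expansions $-\sum_{k\geq 1}H(-k)\,t^{k}$ at $t=0$ and $\sum_{k\geq 0}H(k)\,t^{-k}$ at $t=\infty$, while the right-hand side $(-1)^{m+1}t^{k_0}P(t)$ has matching expansions $(-1)^{m+1}\sum_{k\geq 0}H(k)\,t^{k+k_0}$ at $t=0$ and $(-1)^{m}\sum_{k\geq 1}H(-k)\,t^{k_0-k}$ at $t=\infty$. Since a rational function is determined by either of its Laurent expansions, \eqref{sym P} is equivalent to term-by-term equality in, say, the $t=0$ expansion. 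Comparing the coefficient of $t^l$ for $l\geq 0$: the case $l=0$ forces $k_0\geq 1$ (otherwise the right side would contribute a nonzero constant term $(-1)^{m+1}H(-k_0)$ that the left lacks, and the symmetry below would propagate to contradict $\deg H=m$); the cases $1\leq l\leq k_0-1$ give $H(-l)=0$, yielding \eqref{zeros H}; and the cases $l\geq k_0$ give $H(l-k_0)=(-1)^m H(-l)$, which after the substitution $k=l-k_0\geq 0$ reads $H(k)=(-1)^m H(-k-k_0)$. As both sides are polynomials in $k$ agreeing on infinitely many integers, this extends to all $k\in\Z$, giving \eqref{symmetry}.

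Every step above is reversible, so the converse direction follows by running the same coefficient comparison backwards: given $k_0\geq 1$, \eqref{zeros H} and \eqref{symmetry}, substitute $H(-k)=(-1)^m H(k-k_0)$ into the $t=\infty$ expansion of $P$, reindex by $l=k-k_0$, and use \eqref{zeros H} to extend the lower summation bound from $l=1-k_0$ back to $l=0$, obtaining $P(t)=(-1)^{m+1}t^{-k_0}P(t^{-1})$, which rearranges to \eqref{sym P}. The principal obstacle throughout is the careful derivation of the second Laurent expansion in the key identity; once it is in hand, the rest is routine coefficient matching.
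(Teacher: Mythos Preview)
The paper does not supply its own proof of this proposition; it is quoted from Popoviciu \cite{Po} (cf.\ Stanley \cite[Cor.~4.7]{St}) and used only in the converse direction in Proposition~\ref{gen fct hilbert}. So there is no in-paper argument to compare against. Your approach via the two Laurent expansions of the rational function $P(t)=\U(t)/(1-t)^{m+1}$ at $t=0$ and $t=\infty$ is standard and correct for the main content: the identification $P(t)=-\sum_{k\ge 1}H(-k)t^{-k}$ at infinity is derived properly, and matching coefficients gives exactly \eqref{zeros H} and \eqref{symmetry} from \eqref{sym P}; the reverse direction is also fine.

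There is, however, a genuine gap in your justification that $k_0\ge 1$ is forced. You write that if $k_0\le 0$ then the constant term $(-1)^{m+1}H(-k_0)$ on the right must vanish and ``the symmetry below would propagate to contradict $\deg H=m$''. This second clause is not true: take $H(z)=z$, so $m=1$ and $P(t)=t/(1-t)^2$; one checks directly that $P(t^{-1})=P(t)=(-1)^{m+1}t^{0}P(t)$, so \eqref{sym P} holds with $k_0=0$, the symmetry $H(k)=-H(-k)$ is satisfied, and there is no contradiction with $\deg H=1$. More generally, writing $P(t)=\U(t)/(1-t)^{m+1}$ and letting $s,e$ be the lowest and highest exponents appearing in $\U$, the functional equation forces $k_0=m+1-e-s$, which can take any value in $\{1-m,\ldots,m+1\}$. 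The inequality $k_0\ge 1$ is equivalent to $e+s\le m$, and in particular holds whenever $H(0)\neq 0$ (since then $s=0$); without such a hypothesis it can fail. Thus the forward implication as literally stated is not quite right, and your argument cannot repair it. This does not affect the paper, which only invokes the converse direction with $k_0=\k0\ge 1$ already given, and that part of your proof is sound.
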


As a consequence of the properties satisfied by $\Hil(z)$, we have the following

\begin{prop}\label{gen fct hilbert}
Let $\ac$ be an $S^1$-space and assume its index is non-zero. Let $\Hil(z)$ be the associated Hilbert polynomial of degree $\deg(\Hil)=m$. Let $N_0$
be the number of fixed points with $0$ negative weights.
Then the generating function $\Gen(t)$ of $\{\Hil(k)\}_{k\in \mathbb{N}}$  
is given by
\begin{equation}\label{gener fct}
\Gen(t)=\frac{\U(t)}{(1-t)^{m+1}} 
\end{equation}
where $\U(t)$ is a polynomial in $\R[t]$ such that $\U(0)=N_0$, with 
\begin{equation}\label{prop Gen1}
\Gen(t^{-1})=(-1)^{m+1}t^{\k0} \Gen(t)
\end{equation}
and 
\begin{equation}\label{prop U}
 \U(t^{-1})=t^{\k0-m-1}\U(t)\,.
\end{equation}
Moreover, if $\Hil(z)\not\equiv 0$, then
\begin{equation}\label{degree U}
\frac{m+1-\k0}{2}\leq \deg(\U)\leq m+1-\k0\,,
\end{equation}
and $\deg(\U)=m+1-\k0$ if and only if $N_0\neq 0$.
Here $\deg(\U)$ denotes the degree of $\U$.
\end{prop}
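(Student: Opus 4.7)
The plan is to deduce each statement from Proposition \ref{properties P} and Theorem \ref{main theorem} via Popoviciu's reciprocity (Proposition \ref{Pop}), and then to read off the degree bounds from the resulting palindromic symmetry of $\U(t)$.

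First I would recall the standard fact that, for any polynomial $H(z)\in\R[z]$ of degree $m$, the generating function $\sum_{k\geq 0} H(k)\,t^k$ is a rational function of the form $V(t)/(1-t)^{m+1}$ with $V\in\R[t]$ of degree at most $m$. This is immediate from the expansions $\sum_{k\geq 0}\binom{k+j}{j}t^{k}=(1-t)^{-(j+1)}$ together with the fact that $\{\binom{z+j}{j}\}_{j=0}^m$ is a basis of polynomials of degree $\leq m$. Applied to $\Hil(z)$, this gives \eqref{gener fct} with $\U(t)$ a polynomial; evaluating at $t=0$ yields $\U(0)=\Gen(0)=\Hil(0)$, which equals $N_0$ by Proposition \ref{properties P}\eqref{1a}.

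Next, to obtain \eqref{prop Gen1}, I would verify the hypotheses of Popoviciu's theorem with the integer $k_0=\k0$. Theorem \ref{main theorem} gives $\Hil(-1)=\Hil(-2)=\cdots=\Hil(-\k0+1)=0$, which is \eqref{zeros H}. Proposition \ref{properties P}\eqref{3a} gives $\Hil(k)=(-1)^n\Hil(-\k0-k)$ for every $k\in\Z$; and by Proposition \ref{properties P}\eqref{4a} we have $m\equiv n\pmod 2$, so the sign is $(-1)^m$, matching \eqref{symmetry}. Popoviciu's theorem then yields \eqref{prop Gen1}.

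To derive \eqref{prop U}, substitute \eqref{gener fct} into \eqref{prop Gen1}. Computing
\[
\Gen(t^{-1})=\frac{\U(t^{-1})}{(1-t^{-1})^{m+1}}=\frac{(-1)^{m+1}\,t^{m+1}\,\U(t^{-1})}{(1-t)^{m+1}}
\]
and comparing with $(-1)^{m+1}t^{\k0}\U(t)/(1-t)^{m+1}$ gives $t^{m+1}\U(t^{-1})=t^{\k0}\U(t)$, i.e.\ \eqref{prop U}.

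Finally, for \eqref{degree U} and the characterization of equality, write $\U(t)=\sum_{j=v}^{d} a_j t^j$ with $a_v\neq 0$ and $a_d\neq 0$, where $v$ is the valuation and $d=\deg(\U)$; note that $\Hil\not\equiv 0$ ensures $\U\not\equiv 0$. Then $\U(t^{-1})=\sum_{j=v}^{d} a_j t^{-j}$ has lowest-degree term $a_d t^{-d}$ and highest-degree term $a_v t^{-v}$, while $t^{\k0-m-1}\U(t)$ has lowest-degree term $a_v t^{v+\k0-m-1}$ and highest-degree term $a_d t^{d+\k0-m-1}$. The reciprocity \eqref{prop U} forces
\[
v+d=m+1-\k0,
\]
together with the palindromic relation $a_{v+i}=a_{d-i}$. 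Since $v\geq 0$ and $v\leq d$, this immediately gives $(m+1-\k0)/2\leq d\leq m+1-\k0$, which is \eqref{degree U}. Moreover $d=m+1-\k0$ holds iff $v=0$, iff $\U(0)=a_0\neq 0$, iff $N_0\neq 0$ (using $\U(0)=N_0$). The main (very mild) obstacle is bookkeeping the sign $(-1)^m$ versus $(-1)^n$ in applying Popoviciu; this is handled by Proposition \ref{properties P}\eqref{4a}.
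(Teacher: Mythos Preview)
Your proof is correct and follows essentially the same route as the paper's: both invoke the standard rational-function form of the generating series, identify $\U(0)=\Hil(0)=N_0$, feed Proposition \ref{properties P}\eqref{3a},\eqref{4a} and Theorem \ref{main theorem} into Popoviciu's reciprocity to obtain \eqref{prop Gen1}, then read off \eqref{prop U} and the degree bounds from the resulting palindromic relation. The only cosmetic difference is that you phrase the degree argument via the valuation $v$ and the identity $v+d=m+1-\k0$, whereas the paper writes out the coefficient comparison explicitly; you might also note (as the paper does) that for $\k0=1$ the vanishing hypothesis \eqref{zeros H} is vacuous, so that Theorem \ref{main theorem} (which assumes $\k0\geq 2$) is not needed there.
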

Thus, by Lemma \ref{N0 1}, if $(\M,\omega)$ is a compact symplectic manifold and the $S^1$-action is Hamiltonian, then 
the polynomial $\U(t)$ is of degree $m+1-\k0$.
\begin{proof}
It is well known that the generating function of a sequence $\{H(k)\}_{k\in \mathbb{N}}$, where $H\in \R[z]$ is a polynomial of degree $m$,
is of the form given by \eqref{gener fct}, where $\U(t)\in \R[t]$ is a polynomial of degree at most equal to $m$. In order to prove that $\U(0)=N_0$, observe that 
$$\Gen(t)=\frac{\U(t)}{(1-t)^{m+1}}= \U(t)\sum_{k\geq 0}\binom{m+k}{m}t^k=\U(0)+tQ(t)$$
for some formal power series $Q(t)\in \R[[t]]$. Thus $\U(0)=\Gen(0)=\Hil(0)$, and by Proposition \ref{properties P} \eqref{1a} $\Hil(0)=N_0$.

As for \eqref{prop U}, observe that by Theorem \ref{main theorem} \eqref{H=0 even}, if $\k0\geq 2$ we have that \eqref{zeros H} is satisfied for $k_0=\k0$, the index of $(\M,\J)$.
If $\k0=1$ \eqref{zeros H} is trivially satisfied, since it is the empty condition. 
Moreover, by Proposition \ref{properties P} \eqref{3a} and \eqref{4a}, we have that \eqref{symmetry} is satisfied as well. Thus by Proposition \ref{Pop} 
the generating function $\Gen(t)$ of $\{\Hil(k)\}_{k\in \mathbb{N}}$ satisfies \eqref{prop Gen1}, obtaining
$$
(-1)^{m+1}\frac{t^{m+1}\U(t^{-1})}{(1-t)^{m+1}}=\Gen(t^{-1})=(-1)^{m+1}t^{\k0}\Gen(t)=(-1)^{m+1}\frac{t^{\k0}\U(t)}{(1-t)^{m+1}}
$$
and \eqref{prop U} follows.

Let $e=\deg(\U)$ and $\U(t)=\alpha_0+\alpha_1t+\cdots +\alpha_e t^e$. By \eqref{prop U} we have that
\begin{equation}\label{coeff U}
\alpha_et^{m+1-\k0-e}+\alpha_{e-1}t^{m+1-\k0-e+1}+\cdots + \alpha_0t^{m+1-\k0}=\alpha_0+\alpha_1t+\cdots +\alpha_e t^e\,,
\end{equation}
hence we must have $0\leq m+1-\k0-e\leq e$, and \eqref{degree U} follows.
The equality in \eqref{coeff U} also implies that $\alpha_0=\U(0)=N_0\neq 0$ if and only if $m+1-\k0-e=0$.
\end{proof}

We recall that a polynomial of degree $e$, $U(t)=\alpha_0+\alpha_1t+\cdots + \alpha_e t^e$, is called \emph{self-reciprocal} if 
\begin{equation}\label{self-rec}
t^{e}U(t^{-1})=U(t)\,.
\end{equation}
Such a polynomial is sometimes also referred to as a \emph{palindromic}, since \eqref{self-rec} is equivalent to saying that
the list of coefficients $\alpha_0\,\alpha_1\,\cdots \alpha_e$ is a palindrome, i.e.\;$\alpha_i=\alpha_{e-i}$ for every $i$.
\begin{corollary}\label{U palindrom}
With the same notation of Proposition \ref{gen fct hilbert}, we have that:
\begin{itemize}
\item[({\bf i})] $\U(t)$ is divisible by $t^{m+1-\k0-e}$, where
$e=\deg(\U)$, and the polynomial $t^{e+\k0-m-1}\U(t)$ is self-reciprocal.  
\item[({\bf ii})] If $(\M,\omega)$ is a symplectic manifold and the $S^1$-action is Hamiltonian,
then $\U(t)$ is self-reciprocal. Moreover if $(\M,\omega)$ is monotone with $\cc_1=\k0 [\omega]$, then 
$\deg(\U)=n+1-\k0$.
\end{itemize}
\end{corollary}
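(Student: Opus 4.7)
The plan is to derive both statements directly from the reciprocity relation \eqref{prop U}, namely $\U(t^{-1}) = t^{\k0-m-1}\U(t)$, together with the degree bound $\deg(\U) \leq m+1-\k0$ and the identification $\U(0) = N_0$ already established in Proposition \ref{gen fct hilbert}. Nothing else is really needed.

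For part ({\bf i}), I would set $e = \deg(\U)$ and multiply \eqref{prop U} through by $t^{e}$, so that the left-hand side $t^{e}\U(t^{-1})$ becomes manifestly a polynomial of degree $e$ (it is the reversal of $\U$). The right-hand side is then $t^{e+\k0-m-1}\U(t)$, and since the bound $e \leq m+1-\k0$ forces the exponent $e+\k0-m-1$ to be non-positive, the right-hand side is a polynomial if and only if $\U(t)$ is divisible by $t^{m+1-\k0-e}$. This is the divisibility statement. Writing $V(t) := \U(t)/t^{m+1-\k0-e}$, which is how one should parse the symbol $t^{e+\k0-m-1}\U(t)$, a one-line substitution of $\U(t) = t^{m+1-\k0-e}V(t)$ into the reciprocity relation gives
\[
t^{\k0+e-m-1}V(t^{-1}) \;=\; \U(t^{-1}) \;=\; t^{\k0-m-1}\U(t) \;=\; t^{-e}V(t),
\]
which rearranges to $V(t^{-1}) = t^{m+1-\k0-2e}V(t) = t^{-\deg V}V(t)$, exactly the self-reciprocal condition.

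For part ({\bf ii}), the Hamiltonian hypothesis gives $N_0 = 1$ by Lemma \ref{N0 1}, so Proposition \ref{gen fct hilbert} forces $\deg(\U) = m+1-\k0$; the shift $m+1-\k0-e$ in part ({\bf i}) therefore vanishes and $\U$ itself is self-reciprocal. For the monotone addendum, $\cc_1 = \k0[\omega]$ yields $\cc_1^n[\M] = \k0^n [\omega]^n[\M]$, a non-zero multiple of the symplectic volume; hence the leading coefficient $a_n$ of $\Hil(z)$ in \eqref{Hilbert pol} is non-zero, giving $m = \deg(\Hil) = n$, and combining gives $\deg(\U) = n+1-\k0$.

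There is really no obstacle: both parts are a few lines of bookkeeping from \eqref{prop U}. The only subtle point I would flag is the sign $e+\k0-m-1 \leq 0$ in the notation $t^{e+\k0-m-1}\U(t)$, which must be read as division by a positive power of $t$, and is exactly legitimized by the divisibility step.
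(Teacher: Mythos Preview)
Your proof is correct and follows essentially the same route as the paper: both parts are unpacked directly from the reciprocity identity \eqref{prop U} together with the degree information and $\U(0)=N_0$ from Proposition \ref{gen fct hilbert}, with part ({\bf ii}) reduced to $N_0\neq 0$ via Lemma \ref{N0 1} and $m=n$ via $\cc_1^n[\M]\neq 0$. The paper's proof cites the coefficient comparison \eqref{coeff U} from the proof of Proposition \ref{gen fct hilbert} rather than introducing the auxiliary $V(t)$, but this is purely a presentational difference.
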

\begin{proof}
The claims in ({\bf i}) are a consequence of \eqref{prop U} and \eqref{coeff U}. 
If $\deg(\U)=e=m+1-\k0$, which by Proposition \ref{gen fct hilbert} is equivalent to having $N_0\neq 0$,
we obtain that $\U(t)$ is self-reciprocal, and the first claim in ({\bf ii}) follows from Lemma \ref{N0 1}.
The second claim follows from observing that monotonicity implies $ \cc_1^n[\M] \neq 0$, hence $\deg(\Hil)=n$.
\end{proof}
\begin{rmk}\label{num of cds}
Observe that the polynomial $\U(t)$ determines $\Gen(t)$ which, in turns, determines $\Hil(z)$. Thus the Hilbert polynomial, and hence
all the combinations of Chern numbers $\cc_1^h T_{n-h}[\M]$, for $h=0,\ldots,n$, are completely determined by the coefficients of $\U(t)$.
Moreover, if $N_0$ is given, the coefficient of degree zero in $\U(t)$ is known, since by Proposition \ref{gen fct hilbert} $\U(0)=N_0$.
In conclusion, from Corollary \ref{U palindrom} it follows that the number of coefficients of $\U(t)$ to determine is at most
equal to $\floor*{\displaystyle\frac{m-\k0-1}{2}}+1$. This explains why
the number of conditions that completely determine the Hilbert polynomial (and hence the combinations of Chern numbers $\cc_1^h T_{n-h}[\M]$)
is the same when $\k0=n+1-2k$ and $\k0=n-2k$, for every $k\in \Z$ such that $0\leq k\leq \frac{n-1}{2}$.
\end{rmk}
In the beautiful note \cite{RV}, the author analysis the position of the roots of $\Hil(z)$ in terms of those of $\U(t)$,
 deriving the following
\begin{thm}[Rodriguez-Villegas \cite{RV}]\label{RV theorem}
Let the notation be as in Proposition \ref{gen fct hilbert}, and $\mathcal{T}_k$ as in Definition \ref{def: RVGo}. 
Assume that $\Hil(z)\not\equiv 0$ and that all the roots of $\U(t)$ are on the unit circle. Then $\Hil(z)$ belongs to $\mathcal{T}_{\k0}$.
\end{thm}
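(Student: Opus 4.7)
Theorem~\ref{main theorem} gives $\Hil(-1)=\cdots=\Hil(-\k0+1)=0$, so we may factor
\[
\Hil(z)=C(z)\prod_{j=1}^{\k0-1}(z+j)
\]
for a unique $C\in\R[z]$ of degree $m-\k0+1$, where $m:=\deg\Hil$. Combining the reciprocity $\Hil(z)=(-1)^n\Hil(-\k0-z)$ of Proposition~\ref{properties P}\eqref{3a} with the identity $\prod_{j=1}^{\k0-1}((-\k0-z)+j)=(-1)^{\k0-1}\prod_{j=1}^{\k0-1}(z+j)$ forces $C(-\k0-z)=(-1)^{m-\k0+1}C(z)$, so the roots of $C$ come in pairs $\{z_0,-\k0-z_0\}$ symmetric about $-\k0/2$ (with $-\k0/2$ itself a root of odd multiplicity whenever $\deg C$ is odd). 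Proving the theorem thus amounts to forbidding any pair $\{z_0,-\k0-z_0\}$ with $\Re z_0\neq -\k0/2$.

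\textbf{Making the transform $\U\mapsto C$ explicit.} Expand $(1-t)^{-(m+1)}=\sum_{k\ge 0}\binom{k+m}{m}t^k$ in the defining identity $\Gen(t)=\U(t)/(1-t)^{m+1}$ and match coefficients. The resulting polynomial identity extends from nonnegative integers to all $z\in\C$ and reads
\[
\Hil(z)=\sum_{j=0}^{e}\alpha_j\binom{z+m-j}{m},\qquad \alpha_j:=[t^j]\U(t),\ e:=\deg\U.
\]
For every $j\in\{0,\ldots,e\}$ the binomial $\binom{z+m-j}{m}$ vanishes on $\{-1,\ldots,-\k0+1\}$, so pulling out the common $\prod_{r=1}^{\k0-1}(z+r)/m!$ yields
\[
C(z)=\frac{1}{m!}\sum_{j=0}^{e}\alpha_j\,Q_j(z),\qquad Q_j(z):=\prod_{i=0}^{j-1}(z-i)\prod_{k=0}^{m-\k0-j}(z+\k0+k),
\]
each $Q_j$ being monic of degree $m-\k0+1$. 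Since $\U$ is self-reciprocal by Corollary~\ref{U palindrom}, the substitutions $u=t+t^{-1}$ and $w=z+\k0/2$, $v=w^2$, convert respectively the hypothesis ``roots of $\U$ on $|t|=1$'' and the desired conclusion ``roots of $C$ on $\Re z=-\k0/2$'' into assertions about associated polynomials $\widetilde U(u)$ and $\widetilde C(v)$: that the roots of the former lie in $[-2,2]$, and those of the latter in $(-\infty,0]$. The theorem is thereby reformulated as a statement about the explicit linear transform $\Phi:\widetilde U\mapsto\widetilde C$ coming from the formula above.

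\textbf{Main obstacle.} What remains, and where the real work concentrates, is to verify that $\Phi$ sends polynomials real-rooted in $[-2,2]$ to polynomials real-rooted in $(-\infty,0]$. My plan is to pass to the contour integral representation
\[
\Hil(z)=\frac{1}{2\pi i}\oint_{|t|=\rho}\frac{\U(t)}{(1-t)^{m+1}}\,t^{-z-1}\,dt,\qquad 0<\rho<1,
\]
analytically continued in $z$, and to factor $\U(t)=\alpha_e\prod_k(t-e^{i\phi_k})$ using the unit-circle hypothesis. For $z=-\k0/2+\xi$ with $\Re\xi\neq 0$, the goal is to show that the argument of the integrand winds monotonically along the contour, so that the integral cannot vanish; combined with the pair symmetry of the first paragraph, this forces every root of $C$ to satisfy $\Re z=-\k0/2$. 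The monotonicity step, which is the analytic core of Rodriguez-Villegas's note \cite{RV}, is where I expect the principal difficulty to lie; the preceding reductions amount to bookkeeping that isolates this single statement about $\Phi$.
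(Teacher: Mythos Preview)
First, note that the paper does not prove Theorem~\ref{RV theorem}: it is simply quoted from \cite{RV}. So the comparison is with Rodriguez-Villegas's original argument, not with anything in the present paper.

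Your reductions are correct --- the factorisation $\Hil(z)=C(z)\prod_{j=1}^{\k0-1}(z+j)$, the symmetry of $C$, and the formula $\Hil(z)=\sum_j\alpha_j\binom{z+m-j}{m}$ are all right, and under the hypothesis one has $\U(0)=N_0\neq 0$, so $\U$ is self-reciprocal of degree $m+1-\k0$ as you use. But the proposal stops at the crucial point: you defer the ``monotonicity step'', which is the entire content of the theorem, and the contour-integral plan meant to carry it has a genuine flaw. For non-integer $z$ the integrand $t^{-z-1}$ is multivalued on $|t|=\rho$; any single-valued parametrisation yields an entire function $\sum_{k\ge0}\Hil(k)\rho^{k-z}\operatorname{sinc}\pi(k-z)$, which is \emph{not} the polynomial $\Hil(z)$ off the integers. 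So the representation you intend to analyse is not available as written.

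Rodriguez-Villegas's actual proof avoids all of this with a short induction on $e=\deg\U$, using the identity that if $U=(t-\zeta)V$ then $H_U(z)=H_V(z-1)-\zeta H_V(z)$. Assuming $H_V\in\mathcal T_{\k0+1}$, a two-line computation gives $H_U(z)=C_U(z)\prod_{j=1}^{\k0-1}(z+j)$ with $C_U(z)=z\,C_V(z-1)-\zeta(z+\k0)C_V(z)$. Writing $w=z+\k0/2$ and $C_V(z)=a\prod_k(z+\tfrac{\k0+1}{2}-ib_k)$, the equation $C_U=0$ together with $|\zeta|=1$ forces
\[
\bigl|w-\tfrac{\k0}{2}\bigr|\prod_k\bigl|w-\tfrac12-ib_k\bigr|=\bigl|w+\tfrac{\k0}{2}\bigr|\prod_k\bigl|w+\tfrac12-ib_k\bigr|,
\]
and for $\Re w\neq 0$ every factor on one side is strictly smaller than its counterpart, so equality fails. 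Hence $C_U$ has all roots on $\Re w=0$, i.e.\ $H_U\in\mathcal T_{\k0}$; the base case $e=0$ is $\Hil(z)=N_0\binom{z+m}{m}\in\mathcal T_{m+1}$ trivially. This modulus argument is the missing idea; your bookkeeping is compatible with it, but the analytic route you outline does not reach it.
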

In the next section we analyse the different expressions of $\U(t)$ for $\k0\in \{n-2,n-1,n,n+1\}$.
As a consequence, we prove that if $\k0=n$ or $\k0=n+1$, then $\Hil(z)$ always belongs to $\mathcal{T}_{\k0}$ (unless $\Hil(z)\equiv 0$).
If $\k0=n-2$ or $n-1$, we
derive necessary and sufficient conditions on the Chern numbers that 
ensure $\Hil(z)$ to be in $\mathcal{T}_{\k0}$, or more in general that ensure its roots to be on the canonical strip $\mathcal{S}_{\k0}$ (see Corollaries \ref{pos roots n-1} and \ref{pos roots n-2}).  As a byproduct, we prove that when $N_0=1$ and $n$ is big enough, then $\Hil(z)$ belongs to $\mathcal{T}_{\k0}$ \emph{if and only if}
the roots of $\U(t)$ are on the unit circle (see Corollaries \ref{RV1} and \ref{RV2}).

\subsection{Connection with Ehrhart polynomials}\label{connections ehrhart}
Some of the results in Section \ref{equations chern} can be regarded as a generalisation of what is already known for the Ehrhart polynomial of a reflexive polytope.
The link between Hilbert polynomials of $S^1$-spaces and Ehrhart polynomials of reflexive polytopes is given by monotone symplectic toric manifolds.

Suppose that $(\M,\omega)$ is a compact symplectic manifold of dimension $2n$, and that the $S^1$-action extends to a toric action, i.e.\
$S^1$ is a circle subgroup in an $n$-dimensional torus $\mathbb{T}^n$ which is acting effectively on $(\M,\omega)$ 
 with moment map $\Psi\colon (\M,\omega) \to Lie(\mathbb{T}^n)^*$. We identify $Lie(\mathbb{T}^n)^*$ with $\R^n$, and let the dual lattice of $\T^n$ be $\Z^n$.  
 By the Atiyah \cite{At1} and Guillemin-Sternberg \cite{GS82} convexity theorem, we know
  that $\Psi(\M)=: \Delta$ is a convex polytope, more precisely it is the convex hull 
 of its vertices, which coincide with the images of the fixed points of the $\T^n$ action. 
  Suppose that $(\M,\omega)$ is also \emph{monotone} and rescale the symplectic form so that $\cc_1=\k0 [\omega]$ (so $[\omega]$
 is primitive in $H^2(\M;\Z)$, which is torsion free in this case). Choose the moment map $\Psi$ so that all the vertices of $\Delta$ belong to the lattice $\Z^n$: we call such polytope $\Delta$ \emph{primitive} and \emph{integral}. 
 As a consequence of a result of Danilov \cite{Da}, we have that 
 \emph{the Hilbert polynomial $\Hil(z)$ of $(\M,\omega)$ coincides with the Ehrhart polynomial $i_{\Delta}(z)$ of $\Delta$}.
 Moreover, it is well-known that there exists a (unique) $k\in \Z_{>0}$ such that the dilated polytope 
  $\Delta'=k \Delta$, suitably translated by an integer vector, is \emph{reflexive}\footnote{An integral polytope $\mathcal{P}\subset \R^n$ of dimension $n$ is reflexive if it contains the origin in its interior, and its dual polytope $\mathcal{P}^*=\{\mathbf{x}\in \R^n\mid \mathbf{x}\cdot \mathbf{y}\geq -1\mbox{ for all }\mathbf{y}\in \mathcal{P}\}$
 is also integral.}. By a result of Hibi \cite{Hibi}, this is equivalent to saying that the Ehrhart polynomial $i_{\Delta'}(z)$ and its associated generating function $P_\Delta'(t)=\frac{U(t)}{(1-t)^{n+1}}$
 satisfy 
 \begin{equation}\label{deltas}
 i_{\Delta'}(z)=(-1)^n i_{\Delta'}(-1-z)\quad\quad\mbox{and}\quad\quad P_{\Delta'}(t^{-1})=(-1)^{n+1}t \,P_{\Delta'}(t).
 \end{equation}
The following gives a combinatorial characterisation of the index $\k0$ of $(\M,\omega)$ (which, by Remark \ref{mcn}, coincides with the minimal Chern number): 
\begin{lemma}\label{k0 equivalence}
Let $(\M,\omega,\T,\Psi)$ be a monotone symplectic toric manifold, with symplectic form satisfying $\cc_1=\k0[\omega]$. Consider the primitive integral moment polytope image $\Delta$.
Then the index $\k0$ is the unique integer so that $\Delta'=\k0 \Delta$ is reflexive.
\end{lemma}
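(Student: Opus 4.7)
The plan is to exploit the identification $\Hil(z)=i_\Delta(z)$ (Danilov) together with the reciprocity $\Hil(z)=(-1)^n\Hil(-\k0-z)$ from Proposition \ref{properties P} \eqref{3a}, and compare with Hibi's characterisation of reflexive polytopes via the palindromicity of the $h^*$-vector (equivalently, via the reciprocity of the Ehrhart polynomial).

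For any positive integer $k$, I would start from the standard identity $i_{k\Delta}(z)=i_\Delta(kz)=\Hil(kz)$, which is invariant under translation of $k\Delta$ by an integer vector. To establish that $k=\k0$ works, I compute
\[
i_{\k0\Delta}(-1-z)=\Hil(-\k0-\k0\,z)=(-1)^n\Hil(\k0\,z)=(-1)^n\,i_{\k0\Delta}(z),
\]
using Proposition \ref{properties P} \eqref{3a} in the middle step. Setting $z=0$ gives $(-1)^n i_{\k0\Delta}(-1)=1$, and by Ehrhart--Macdonald reciprocity this is the number of interior lattice points of $\k0\Delta$. Hence $\k0\Delta$ has a unique interior lattice point; translating by the negative of that point yields a lattice polytope $P$ containing the origin in its interior and still satisfying $i_P(z)=(-1)^n i_P(-1-z)$. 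By Hibi's theorem \cite{Hibi} this is equivalent to $P$ being reflexive.

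For uniqueness, suppose $k\Delta$ is reflexive (after an integer translation) for some $k\in\Z_{>0}$. Then Hibi's reciprocity gives $i_{k\Delta}(z)=(-1)^n i_{k\Delta}(-1-z)$, which combined with $i_{k\Delta}(z)=\Hil(kz)$ yields the polynomial identity $\Hil(w)=(-1)^n\Hil(-k-w)$ in $w=kz$ (valid as a polynomial identity because it holds on the infinite set $k\Z$). Comparing with Proposition \ref{properties P} \eqref{3a} gives $\Hil(u)=\Hil(u+\k0-k)$ for all $u\in\C$. Monotonicity with $\cc_1=\k0[\omega]$ implies $\cc_1^n[\M]=\k0^n[\omega]^n[\M]>0$, so $\Hil$ has degree exactly $n\ge 1$ and therefore is not periodic; this forces $k=\k0$.

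The main obstacle is essentially bookkeeping: one must be careful that Hibi's theorem is being applied to a polytope that already contains the origin in its interior, which is why the computation $(-1)^n i_{\k0\Delta}(-1)=1$ (producing exactly one interior lattice point, hence a canonical integer translation) is needed before invoking \cite{Hibi}. Everything else is a direct substitution into the reciprocity of $\Hil(z)$ that was established earlier in the paper.
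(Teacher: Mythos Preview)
Your argument is correct and, for the uniqueness direction, coincides with the paper's: both derive $\Hil(w)=(-1)^n\Hil(-k-w)$ from Hibi's reciprocity and then compare with Proposition \ref{properties P} \eqref{3a} to force $k=\k0$ via non-periodicity of a nonconstant polynomial. The only difference is in the existence direction: the paper simply invokes, as stated just before the lemma, the well-known fact that there is a unique $k$ with $k\Delta$ reflexive (after translation), and then identifies that $k$ with $\k0$; you instead prove directly that $\k0\Delta$ is reflexive by combining the reciprocity of $\Hil$ with Ehrhart--Macdonald to locate the unique interior lattice point before applying Hibi. Your route is slightly more self-contained, while the paper's is shorter because it offloads existence to the cited background fact.
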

\begin{proof}
First of all observe that, from $\Delta'=k\Delta$ we have $i_{\Delta}(z)=i_{\Delta'}\big(\frac{z}{k}\big)$ for every $z\in \C$.
Moreover, as mentioned before, $\Hil(z)=i_\Delta(z)$.
 So from \eqref{deltas} we have that 
$$
\Hil(z)=i_{\Delta}(z)=i_{\Delta'}\Big(\frac{z}{k}\Big)=(-1)^n i_{\Delta'}\Big(-1-\frac{z}{k}\Big)=(-1)^n i_\Delta(-k-z)=(-1)^n \Hil(-k-z)\,,
$$
for every $z\in \C$. By Remark \ref{H Ham}, $\Hil(z)$ is a nonzero polynomial, so Proposition \ref{properties P} \eqref{3a} implies that $\k0=k$. 
\end{proof}
It is in this sense that we can regard the symmetry property of $\Hil(z)$ (i.e.\ Proposition \ref{properties P} \eqref{3a}) and the results in Proposition \ref{gen fct hilbert} as a generalisation of 
\eqref{deltas}.

\section{Computation of $\Hil(z)$ and Chern numbers for some values of $\k0$}\label{sec: values k0}
In this section, we compute explicitly the Hilbert polynomial $\Hil(z)$ and its associated generating 
function for $\k0\geq n-2$ and $\k0\neq 0$, deriving more properties of the Chern numbers of $\ac$. 

Let $\sigma_j(x_1,\ldots,x_n)$ be the $j$-th elementary symmetric polynomials 
in $x_1\ldots,x_n$, for $j=0,\ldots,n$, and let $\left[ \begin{array}{c} n \\ k \end{array} \right]$ be the \emph{unsigned Stirling numbers of the
first kind}, where $k,n \in \mathbb{N}$ and $1\leq k\leq n$, satisfying
\begin{equation}\label{stirling}
(x)^{(n)}=x(x+1)\cdots (x+n-1)=\sum_{k=0}^n \left[ \begin{array}{c} n \\ k \end{array} \right]x^k\,,
\end{equation}
where $(x)^{(n)}$ is the rising factorial. 
Thus we have the relation:
\begin{equation}\label{stirling permutation}
\sigma_k(1,2,\ldots,n)=\left[ \begin{array}{c} n+1 \\  \\ n-k+1 \end{array} \right]\,,
\end{equation}
and the following well-known identities:
\begin{align}
& \sigma_0(1,2,\ldots,n)=\left[ \begin{array}{c} n+1 \\ n+1 \end{array} \right]=1\nonumber\\
\label{sigma1}& \sigma_1(1,2,\ldots,n)=\left[ \begin{array}{c} n+1 \\ n \end{array} \right] = \binom{n+1}{2}\\
\label{sigma2} & \sigma_2(1,2,\ldots,n)= \left[ \begin{array}{c} n+1 \\ n-1 \end{array} \right] =\frac{1}{4}(3n+2)\binom{n+1}{3}=\frac{(3n+2)(n+1)n(n-1)}{24}
\end{align}
Observe that by Corollary \ref{bound on k0}, if $\k0>n+1$ then $\Hil(z)\equiv 0$ and $ \cc_1^h T_{n-h}[\M]=0$ for every $h=0,\ldots,n$. So in the rest of the section
we will focus on the cases in which $0<\k0\leq n+1$.

Before beginning, we remind the reader that the Hilbert polynomial of $\C P^n$ is given by $\frac{\prod_{j=1}^n(z+j)}{n!}$.
\begin{prop}[$\k0=\mathbf{n+1}$]\label{cor n+1}

Let $\ac$ be and $S^1$-space with index $\k0=n+1$. Let $N_0$ be the number of fixed points with $0$ negative weights.
 Then 
\begin{equation}\label{H k0=n+1}
\Hi(z)= \frac{N_0}{n!}\prod_{j=1}^n(z+j)=N_0 \Hil_{\C P^n}(z)\,,
\end{equation}
where $\Hil_{\C P^n}(z)$ is the Hilbert polynomial of $\C P^n$, and for every $h=0,\ldots,n$ we have 
\begin{equation}\label{n+1 precise}
\cc_1^h\, T_{n-h}[\M]=N_0\frac{h!(n+1)^h}{n!}\left[ \begin{array}{c} n+1 \\ h+1 \end{array} \right]=N_0\, \cc_1^h\,T_{n-h}[\C P^n].
\end{equation}
In particular 
\begin{equation}\label{c1 n+1}
\cc_1^n[\M]=N_0 (n+1)^n\, 
\end{equation}
and
\begin{equation}\label{c1c2}
 \cc_1^{n-2}\cc_2[\M]=N_0\frac{n(n+1)^{n-1}}{2}\,.
\end{equation}
Moreover, the generating function of $\{\Hil(k)\}_{k\in \mathbb{N}}$ is given by
\begin{equation}\label{gen fct n+1}
\Gen(t)=N_0\frac{1}{(1-t)^{n+1}}
\end{equation}
\end{prop}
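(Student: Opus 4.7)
The plan is to exploit the rigidity that forces $\Hil(z)$ to be a multiple of a fixed polynomial, and then to extract the Chern numbers by reading off coefficients via Stirling numbers.

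First I would observe that by Theorem \ref{main theorem} applied with $\k0 = n+1$, the polynomial $\Hil(z)$ vanishes at the $n$ integers $-1,-2,\ldots,-n$. Since $\Hil(z)$ has degree at most $n$ by \eqref{Hilbert pol}, either $\Hil(z) \equiv 0$ or $\Hil(z)$ has degree exactly $n$ and these are all its roots, counted with multiplicity. In either case we can write
\begin{equation*}
\Hil(z) = c \prod_{j=1}^n (z+j)
\end{equation*}
for some $c \in \R$ (allowing $c=0$). Evaluating at $z=0$ and invoking Proposition \ref{properties P}~(1) gives $N_0 = \Hil(0) = c \cdot n!$, so $c = N_0/n!$. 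This establishes \eqref{H k0=n+1}; note that if $N_0 = 0$ the identity holds trivially with $\Hil \equiv 0$, in line with Corollary \ref{bound on k0}.

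Next I would expand $\prod_{j=1}^n(z+j)$ in powers of $z$. The coefficient of $z^h$ is $\sigma_{n-h}(1,2,\ldots,n)$, which by \eqref{stirling permutation} equals the unsigned Stirling number $\left[\begin{array}{c} n+1 \\ h+1\end{array}\right]$. Comparing this expansion with the defining expression \eqref{Hilbert pol} of $\Hil(z)$ (recalling $\k0 = n+1$), namely equating the coefficient of $z^h$ on both sides, yields
\begin{equation*}
\frac{\cc_1^h \, T_{n-h}[\M]}{(n+1)^h \, h!} = \frac{N_0}{n!} \left[\begin{array}{c} n+1 \\ h+1\end{array}\right],
\end{equation*}
which is precisely \eqref{n+1 precise}. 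Specializing to $h=n$ and using $\left[\begin{array}{c} n+1 \\ n+1\end{array}\right] = 1$ gives \eqref{c1 n+1}. For \eqref{c1c2}, I would take $h=n-2$, use $\left[\begin{array}{c} n+1 \\ n-1\end{array}\right] = \sigma_2(1,\ldots,n) = (3n+2)(n+1)n(n-1)/24$ from \eqref{sigma2}, substitute $T_2 = (\cc_1^2 + \cc_2)/12$, and solve the resulting linear equation for $\cc_1^{n-2}\cc_2[\M]$ using \eqref{c1 n+1}; a short simplification yields $\cc_1^{n-2}\cc_2[\M] = N_0\, n(n+1)^{n-1}/2$.

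Finally, for the generating function \eqref{gen fct n+1}, I would apply Proposition \ref{gen fct hilbert}: with $m = \deg(\Hil) \leq n$ and $\k0 = n+1$, the bound \eqref{degree U} forces $\deg(\U) \leq 0$, so $\U(t)$ is the constant $\U(0) = N_0$, and hence $\Gen(t) = N_0/(1-t)^{n+1}$. No genuine obstacle is anticipated: the only thing to be careful about is the case $N_0 = 0$ (where $\Hil \equiv 0$ and all formulas degenerate correctly), and the bookkeeping of the Stirling identity \eqref{sigma2} when deriving \eqref{c1c2}.
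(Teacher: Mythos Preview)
Your proposal is correct and follows essentially the same route as the paper's own proof: derive \eqref{H k0=n+1} from the $n$ forced roots of Theorem~\ref{main theorem} plus the normalization $\Hil(0)=N_0$ from Proposition~\ref{properties P}~(1), extract \eqref{n+1 precise} via the Stirling identity \eqref{stirling permutation}, specialize to $h=n$ and $h=n-2$ for \eqref{c1 n+1} and \eqref{c1c2}, and obtain \eqref{gen fct n+1} from Proposition~\ref{gen fct hilbert}. The only cosmetic difference is that the paper disposes of the case $N_0=0$ separately at the outset via Corollary~\ref{bound on k0}~({\bf ii}), whereas you absorb it into the single formula $\Hil(z)=c\prod_{j=1}^n(z+j)$ with $c=0$; just note that when invoking \eqref{degree U} for the generating function you are implicitly in the case $N_0\neq 0$ (so $m=n$), while $N_0=0$ gives $\Gen\equiv 0$ trivially.
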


\begin{rmk}\label{pos roots n+1}
From \eqref{gen fct n+1} and Proposition \ref{gen fct hilbert} we have that in this case $\U(t)=N_0$, and
if $N_0\neq 0$, the zeros of $\Hi(z)$ coincide with the integers greater than $-\k0=-(n+1)$ and smaller than $0$,
thus in particular $\Hil(z)$ belongs to $\mathcal{T}_{n+1}$, and hence all its roots are on the canonical strip $\mathcal{S}_{n+1}$ (see Theorem \ref{RV theorem}).
\end{rmk}

\begin{proof}[Proof of Proposition \ref{cor n+1}]
If $N_0=0$ then all the claims in Proposition \ref{cor n+1} follow from Corollary \ref{bound on k0} ({\bf ii}). 
Suppose that $N_0\neq 0$. By Proposition \ref{properties P} \eqref{1a}, $\Hi(z)$ is a nonzero polynomial which, by Theorem \ref{main theorem}
\eqref{H=0 even}, has roots $-1,-2,\ldots,-n$ (note that in this case $\k0\geq 2$). Thus $\Hi(z)=\alpha \prod_{j=1}^n(z+j)$.
In order to find $\alpha$ we can use Proposition \ref{properties P} \eqref{1a}, obtaining $\Hi(0)=\alpha\, n!=N_0$, and \eqref{H k0=n+1} follows.
For $h=0,\ldots,n$, the term of degree $h$ on the right hand side of \eqref{H k0=n+1} is given by $\frac{N_0}{n!}\sigma_{n-h}(1,2,\ldots,n)=\frac{N_0}{n!}\left[ \begin{array}{c} n+1 \\ h+1 \end{array} \right]$.
On the other hand, the term of degree $h$ on the left hand side of \eqref{H k0=n+1} can by computed by using \eqref{Hilbert pol}, obtaining
$\displaystyle\frac{\cc_1^h\,T_{n-h}}{(n+1)^h\,h!}[\M]$; this completes the proof of \eqref{n+1 precise}.
In order to prove \eqref{c1 n+1} it is sufficient to consider \eqref{n+1 precise} with $h=n$ (or $h=n-1$).
By taking $h=n-2$, from \eqref{n+1 precise} we have 
\begin{equation}\label{sigma 2}
\cc_1^{n-2}\left(\frac{\cc_1^2+\cc_2}{12}\right)[\M]=N_0 \frac{(n-2)!(n+1)^{n-2}}{n!}\left[ \begin{array}{c} n+1 \\ n-1 \end{array} \right]\,,
\end{equation} 
which, combined with \eqref{c1 n+1} and \eqref{sigma2} proves \eqref{c1c2}.
In order to prove \eqref{gen fct n+1}, observe that, by the above discussion, if $\k0=n+1$ then $\Hil(z)$ is either of degree $n$,
which happens exactly if $N_0\neq 0$, or it is identically zero. In the first case, by Proposition \ref{gen fct hilbert}, $\U(t)$ is of degree
zero and $\U(0)=N_0$, implying \eqref{gen fct n+1}.
\end{proof}

As we will see in the next proposition, the case $\k0=n$ is similar to $\k0=n+1$.
We recall that the Hilbert polynomial of $Q$, the hyperquadric in $\C P^{n+1}$, is given by $\frac{2}{n!}\Big(z+\frac{n}{2}\Big)\prod_{j=1}^{n-1}(z+j)$.
\begin{prop}[$\k0=\mathbf{n}$]\label{cor n}

Let $\ac$ be and $S^1$-space with index $\k0=n$. Let $N_0$ be the number of fixed points with $0$ negative weights.
Then $n\geq 2$ and
\begin{equation}\label{H k0=n}
\Hi(z)= \frac{2\,N_0}{n!}\Big(z+\frac{n}{2}\Big)\prod_{j=1}^{n-1}(z+j)=N_0 \Hil_Q(z)\,, 
\end{equation}
where $\Hil_Q(z)$ is the Hilbert polynomial of $Q$, the hyperquadric in $\C P^{n+1}$. 
Thus for every $h=0,\ldots,n$ we have 
\begin{equation}\label{n precise}
\cc_1^h\, T_{n-h}[\M]=N_0\frac{2\,h!\,n^h}{n!}\Big(\left[ \begin{array}{c} n \\ h \end{array} \right]+\frac{n}{2}\left[ \begin{array}{c} n \\ h+1 \end{array} \right]\Big)\, = N_0 \, \cc_1^h\,T_{n-h}[Q].
\end{equation}
In particular 
\begin{equation}\label{c1 n}
\cc_1^n[\M]=N_0\, 2 n^n
\end{equation}
and
\begin{equation}\label{c1c22}
\cc_1^{n-2}\cc_2[\M]=N_0\,n^{n-2}(n^2-n+2)\,.
\end{equation}
Moreover, the generating function of $\{\Hil(k)\}_{k\in \mathbb{N}}$ is given by
\begin{equation}\label{gen fct n}
\Gen(t)=N_0\frac{1+t}{(1-t)^{n+1}}
\end{equation}
\end{prop}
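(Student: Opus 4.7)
My proposal is to follow the same blueprint as the proof of Proposition \ref{cor n+1}, the only new ingredient being that the root $-n/2$ supplied by Corollary \ref{extra root -k02} replaces the root $-n$ that is available when $\k0=n+1$.

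First I would dispense with the trivial case $N_0=0$. Since $\k0=n$ and any strict inequality $\k0\leq\deg(\Hil)-1\leq n-1$ fails, Corollary \ref{bound on k0}({\bf ii}) forces $\Hil(z)\equiv 0$, so every combination $\cc_1^hT_{n-h}[\M]$ vanishes by \eqref{Hilbert pol}, and both sides of \eqref{H k0=n}--\eqref{gen fct n} become $0$. The remaining constraint $n\geq 2$ then has to be extracted from the case $N_0\ne 0$: for $n=1$ there is only one compact connected almost complex $S^1$-space up to diffeomorphism, namely $S^2$, and its index is $2$ rather than $1$; alternatively one observes that for $n=1$ the formula \eqref{H k0=n} forces $\cc_1[\M]=2N_0$ while the hypothesis $\k0=n=1$ forces $\cc_1[\M]=\pm 1$, which together contradict integrality unless $N_0=0$.

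Assuming now $N_0\neq 0$, Theorem \ref{main theorem} gives the $n-1$ roots $-1,-2,\ldots,-(n-1)$, and Corollary \ref{extra root -k02} gives an additional root at $-n/2$ (with multiplicity $\geq 2$ when $n$ is even, which is exactly what is needed since $-n/2$ is already on the previous list in that case). This produces at least $n$ roots counted with multiplicity; combined with $\deg(\Hil)\leq n$ this forces
\[
\Hil(z)=\alpha\Big(z+\tfrac{n}{2}\Big)\prod_{j=1}^{n-1}(z+j)
\]
for some $\alpha\in\R$. Evaluating at $z=0$ and using Proposition \ref{properties P}\eqref{1a} gives $\alpha\cdot\tfrac{n}{2}\cdot(n-1)!=N_0$, i.e.\ $\alpha=2N_0/n!$, which is \eqref{H k0=n}. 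Comparison with the Hilbert polynomial of the hyperquadric $Q$ in $\C P^{n+1}$ identifies the result with $N_0\Hil_Q(z)$.

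To extract \eqref{n precise}, I would split the product as $z\prod_{j=1}^{n-1}(z+j)+\tfrac{n}{2}\prod_{j=1}^{n-1}(z+j)$, expand each factor via \eqref{stirling permutation} to get coefficients $\left[\begin{array}{c}n\\ h\end{array}\right]$ and $\left[\begin{array}{c}n\\ h+1\end{array}\right]$, and then match with the coefficients $\cc_1^hT_{n-h}[\M]/(n^h h!)$ coming from \eqref{Hilbert pol}. Specialising at $h=n$ yields \eqref{c1 n} immediately. For \eqref{c1c22} I would set $h=n-2$, use the identities \eqref{sigma1}--\eqref{sigma2} (applied to $\sigma_1,\sigma_2$ on $\{1,\ldots,n-1\}$) to evaluate the two Stirling numbers, then substitute $T_2=(\cc_1^2+\cc_2)/12$ and solve for $\cc_1^{n-2}\cc_2[\M]$ using the already-computed value of $\cc_1^n[\M]$; the algebraic simplification $(3n-1)(n-2)+6n=3n^2-n+2$ gives $n^2-n+2$ after subtracting $2n^2$.

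Finally, for the generating function I would invoke Proposition \ref{gen fct hilbert}: since $\deg(\Hil)=n$ (because $N_0\ne 0$ forces the leading coefficient $\alpha\ne 0$), we get $\Gen(t)=\U(t)/(1-t)^{n+1}$ with $\deg(\U)=n+1-\k0=1$ and $\U(0)=N_0$. The self-reciprocity $\U(t^{-1})=t^{-1}\U(t)$ from Corollary \ref{U palindrom}({\bf i}) forces the two coefficients of $\U$ to coincide, so $\U(t)=N_0(1+t)$, which is \eqref{gen fct n}. The main obstacle is none of the steps individually, but rather the bookkeeping in the Stirling-number expansion for $h=n-2$; everything else is a direct transposition of the $\k0=n+1$ argument.
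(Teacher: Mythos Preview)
Your proof is correct and follows essentially the same route as the paper: rule out $n=1$ via the sphere, dispose of $N_0=0$ through Corollary \ref{bound on k0}({\bf ii}), then use Theorem \ref{main theorem} together with Corollary \ref{extra root -k02} to pin down all $n$ roots (with the multiplicity-$2$ observation when $n$ is even), normalise by $\Hil(0)=N_0$, and read off the Chern-number identities and the generating function exactly as in the $\k0=n+1$ case. One small point: your ``alternative'' argument for $n\geq 2$ is circular (it uses the formula \eqref{H k0=n} you are proving) and also misreads what $\k0=1$ says about $\cc_1[\M]$; drop it and keep only the sphere argument, which is what the paper does.
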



\begin{rmk}\label{pos roots n}
From \eqref{gen fct n} and Proposition \ref{gen fct hilbert} we have that in this case $\U(t)=N_0(1+t)$. Thus,
if $N_0\neq 0$, the root of $\U(t)$ is on the unit circle, and 
the zeros of $\Hil(z)$ coincide with the integers greater than $-\k0=-n$ and smaller than $0$, together with $-\frac{n}{2}$,
thus in particular $\Hil(z)$ belongs to $\mathcal{T}_{n}$, and hence its roots are on the canonical strip $\mathcal{S}_{n}$ (see Theorem \ref{RV theorem}).

\end{rmk}


\begin{proof}[Proof of Proposition \ref{cor n}]
First of all, for $n=1$ observe that the only compact almost complex manifold supporting a circle action with discrete fixed point set is the
sphere, since such surface must have positive Euler characteristic. In this case $\k0=2$. So we must have $n\geq 2$, and hence $\k0\geq 2$.

The proof of the rest is very similar to that of Proposition \ref{cor n+1}, but we include it here for the sake of completeness.
If $N_0=0$ then all the claims in Proposition \ref{cor n} follow from Corollary \ref{bound on k0} ({\bf ii}). 
Suppose that $N_0\neq 0$. Then by Proposition \ref{properties P} \eqref{1a} we have that $\Hil(z)\not\equiv 0$, and from
Theorem \ref{main theorem} \eqref{H=0 even} and Corollary \ref{extra root -k02} we have that 
$\Hil(z)=\beta(z+\frac{n}{2})\prod_{j=1}^{n-1}(z+j)$. 
In order to determine $\beta$ we can use 
Proposition \ref{properties P} \eqref{1a}, obtaining 
$\beta=\frac{2\,N_0}{n!}$, thus implying \eqref{H k0=n}. The equations in \eqref{n precise} follow easily from observing that
$$
\sigma_{n-h}\Big(1,2,\ldots,n-1,\frac{n}{2}\Big)=\sigma_{n-h}\big(1,2,\ldots,n-1\big)+\frac{n}{2}\sigma_{n-h-1}(1,2,\ldots,n-1)=\left[ \begin{array}{c} n \\ h \end{array} \right]+\frac{n}{2}\left[ \begin{array}{c} n \\ h+1 \end{array} \right]\,.
$$

In order to prove \eqref{c1 n} it is sufficient to consider \eqref{n precise} with $h=n$ (or $h=n-1$).
To prove \eqref{c1c22}, first of all observe that 
$$
\sigma_2(1,2,\ldots,n-1,\frac{n}{2})=\sigma_2(1,2,\ldots,n-1)+\frac{n}{2}\sigma_1(1,2,\ldots,n-1)=\frac{1}{24}n(n-1)(3n^2-n+2)\,,
$$
where the last equality follows from \eqref{sigma1} and \eqref{sigma2}.
Thus if we take $h=n-2$ in \eqref{n precise} we obtain
\begin{align*}
\cc_1^{n-2}\left(\frac{\cc_1^2+\cc_2}{12}\right)[\M]& =N_0\frac{2(n-2)!n^{n-2}}{n!}\sigma_2(1,2,\ldots,n-1,\frac{n}{2})\\
 & = \frac{N_0}{12}n^{n-2}(3n^2-n+2)\,,\\
\end{align*}
and the conclusion follows from \eqref{c1 n}.

In order to prove \eqref{gen fct n}, observe that, by the above discussion, if $\k0=n$ then $\Hil(z)$ is either of degree $n$,
which happens exactly if $N_0\neq 0$, or it is identically zero. In the first case, by Proposition \ref{gen fct hilbert} and Corollary \ref{U palindrom}, $\U(t)$ is a self-reciprocal polynomial of degree
one and $\U(0)=N_0$, thus implying \eqref{gen fct n}.

\end{proof}

From Propositions \ref{cor n+1} and \ref{cor n} we can see that the cases $\k0=n+1$ and $\k0=n$ are very similar, in the sense that
the Hilbert polynomial $\Hil(z)$, as well as the combinations of Chern numbers $\cc_1^h\,T_{n-h}[\M]$, for $h=0,\ldots,n$, and the generating
function $\Gen(t)$ of $\{\Hil(k)\}_{k\in \mathbb{N}}$, are completely determined (see Remark \ref{num of cds}).
\begin{rmk}\label{liham}
In recent work Li \cite{Li} proves that if the $2n$-dimensional manifold $\M$ is symplectic, the $S^1$ action Hamiltonian and $\chi(\M)=n+1$, then having $\k0=n+1$ (resp.\ $\k0=n$)
is equivalent to having the same total Chern class of $\C P^n$ (resp.\ of the Grassmannian of oriented planes in $\R^{n+2}$ with $n$ odd) which, in turns, is equivalent to having the same integral  
cohomology ring of $\C P^n$ (resp.\ the Grassmannian). Thus in particular, under the above hypotheses, all the Chern numbers are `standard', i.e.\ they agree with those of $\C P^n$ (resp.\ of the hyperquadric).
The assumption $\chi(\M)=n+1$ is essential, since it implies the existence of a quasi-ample line bundle (in the sense specified in Remark \ref{hattori rmk}) which in this case is given by the pre-quantization line bundle (see also \cite[Proposition 7.5 (i)]{GoSa}). 
\end{rmk}

In the following we analyse in details the cases $\k0=n-1$ and $\k0=n-2$.
Observe that if $n=1$ the index $\k0$ cannot be zero, since the only compact almost complex surface
that can be endowed with a compatible $S^1$-action with isolated fixed points is the sphere, for which $\k0= 2$.
So in the next proposition it is not restrictive to assume $n\geq 2$ for $\k0=n-1$.

\begin{prop}[$\k0=\mathbf{n-1}$]\label{k0=n-1}
Let $\ac$ be an $S^1$-space of dimension $2n\geq 4$ with index $\k0=n-1$.
\begin{itemize}
\item[(a)]\label{n-1 a} If $N_0\neq 0$ and $\cc_1^n[\M]\neq 0$ then 
\begin{equation}\label{H n-1 1}
\Hil(z)=\frac{4\,N_0}{(n-2)!\big[(n-1)^2-4a\big]}\Big(z^2+(n-1)z+\frac{(n-1)^2}{4}-a\Big)\prod_{j=1}^{n-2}(z+j)\,,
\end{equation}
where $a\in \R$ is not equal to $\frac{(n-1)^2}{4}$. Moreover 
\begin{equation}\label{c1n n-1}
\cc_1^n[\M]=\frac{4\,N_0\,n(n-1)^{n+1}}{(n-1)^2-4a}\,,
\end{equation}
and
\begin{equation}\label{c1c222}
\cc_1^{n-2}\cc_2[\M]=\frac{4N_0(n-1)^{n-2}}{\big[(n-1)^2-4a\big]}\Big[3-12a-6n+\frac{9}{2}n^2-2n^3+\frac{n^4}{2}\Big]\,.
\end{equation}
\item[(b)]\label{n-1 b} If $N_0\neq 0$ and $\cc_1^n[\M]= 0$ then
\begin{equation}\label{H n-1 2}
\Hil(z)=\frac{N_0}{(n-2)!}\prod_{j=1}^{n-2}(z+j)\,,
\end{equation}
and 
\begin{equation}\label{c1c2 n-1}
\cc_1^{n-2}\cc_2[\M]=12\, N_0 (n-1)^{n-2}\,.
\end{equation}
\end{itemize}

Moreover, in \emph{(a)} and \emph{(b)}, the generating function of $\{\Hil(k)\}_{k\in \mathbb{N}}$ is given by 
\begin{equation}\label{gen fct n-1 1}
\Gen(t)=N_0\frac{1+b\,t+\,t^2}{(1-t)^{n+1}}  
\end{equation}
where $b\in \mathbb{Q}$ is such that $b\,N_0\in \Z$ and
\begin{equation}\label{c1n n-1 b}
\cc_1^n[\M]=N_0(b+2)(n-1)^n\,,
\end{equation}
\begin{equation}\label{c1c2 n-1 b}
\cc_1^{n-2}\cc_2[\M]=N_0(n-1)^{n-2}\big[12+\frac{(b+2)n(n-3)}{2}\big]\,.
\end{equation}
(Thus case \emph{(b)} corresponds to taking $b=-2$.)

\begin{itemize} 
\item[(c)]\label{n-1 c}
If $N_0=0$ then 
\begin{equation}\label{H n-1 3}
\Hil(z)=\gamma \prod_{j=0}^{n-1}(z+j)\,,
\end{equation}
where $\gamma=\frac{1}{(n-1)^n n!} \cc_1^n[\M]$.
\end{itemize}
Moreover, the generating function of $\{\Hil(k)\}_{k\in \mathbb{N}}$ is given by 
\begin{equation}\label{gen fct n-1 3}
\Gen(t)=\gamma\,n!\frac{t}{(1-t)^{n+1}}\,. 
\end{equation}
\end{prop}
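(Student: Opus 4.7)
The plan follows the same template as Propositions \ref{cor n+1} and \ref{cor n}, but with an extra free parameter: the quotient of $\Hil(z)$ by the known factor is now a quadratic (or a constant), rather than being determined by $N_0$ alone. Since $\k0=n-1\ge 1$ (with $n\ge 2$), Theorem \ref{main theorem} supplies the zeros $\Hil(-1)=\cdots=\Hil(-(n-2))=0$ (an empty condition when $n=2$), and Proposition \ref{properties P} gives $\deg(\Hil)\le n$ together with $\deg(\Hil)\equiv n\pmod 2$, so $\deg(\Hil)\in\{n-2,n\}$ whenever $\Hil\not\equiv 0$. In particular $\prod_{j=1}^{n-2}(z+j)$ always divides $\Hil(z)$, with quotient either a constant or a quadratic.

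For cases (a) and (b), $N_0\neq 0$ implies $\Hil(0)=N_0\neq 0$ by Proposition \ref{properties P} \eqref{1a}, so $\Hil\not\equiv 0$; the choice between the two admissible degrees is decided by $\cc_1^n[\M]$, since by \eqref{ah} the leading coefficient of $\Hil$ equals $\cc_1^n[\M]/((n-1)^n n!)$. In case (a), $\deg(\Hil)=n$, I would write $\Hil(z)=q(z)\prod_{j=1}^{n-2}(z+j)$ with $q$ a real quadratic; the reciprocity $\Hil(z)=(-1)^n\Hil(-(n-1)-z)$ from Proposition \ref{properties P} \eqref{3a} forces $q(z)=q(-(n-1)-z)$, hence $q(z)=\alpha\bigl((z+\tfrac{n-1}{2})^2-a\bigr)$ for some $\alpha,a\in\R$. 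Matching $\Hil(0)=N_0$ fixes $\alpha=4N_0/([(n-1)^2-4a](n-2)!)$, giving \eqref{H n-1 1}, and reading off the leading coefficient yields \eqref{c1n n-1}. In case (b), $\deg(\Hil)=n-2$ directly forces $\Hil(z)=\tfrac{N_0}{(n-2)!}\prod_{j=1}^{n-2}(z+j)$, so $\cc_1^n[\M]=0$ is automatic. In either case the coefficient of $z^{n-2}$ in $\Hil$ equals $\cc_1^{n-2}(\cc_1^2+\cc_2)[\M]/(12(n-1)^{n-2}(n-2)!)$ by \eqref{ah}; expanding $\prod_{j=1}^{n-2}(z+j)$ via \eqref{sigma1}, \eqref{sigma2} and subtracting the known value of $\cc_1^n[\M]$ produces \eqref{c1c222} and \eqref{c1c2 n-1}.

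For the generating function under $N_0\neq 0$, Proposition \ref{gen fct hilbert} and Corollary \ref{U palindrom} tell me that $\U$ is self-reciprocal with $\U(0)=N_0$. In case (a), $\deg(\U)=m+1-\k0=2$, so $\U(t)=N_0(1+bt+t^2)$, with $bN_0\in\Z$ forced by integrality of the Taylor coefficients of $\Gen(t)$. In case (b), $\Gen(t)=N_0/(1-t)^{n-1}=N_0(1-t)^2/(1-t)^{n+1}$, matching the same form with $b=-2$. Formulas \eqref{c1n n-1 b} and \eqref{c1c2 n-1 b} then follow from the expansion $(1+bt+t^2)/(1-t)^{n+1}=\sum_{k\ge 0}\bigl[\binom{n+k}{n}+b\binom{n+k-1}{n}+\binom{n+k-2}{n}\bigr]t^k$, treating the right side as the polynomial $\Hil(z)/N_0$ evaluated at $z=k$, and comparing the $z^n$ and $z^{n-2}$ coefficients with \eqref{ah}.

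For case (c), $N_0=0$ gives $\Hil(0)=0$, and Proposition \ref{properties P} \eqref{3a} promotes this to a second zero at $z=-(n-1)$. Together with the known zeros $-1,\ldots,-(n-2)$, this produces $n$ distinct roots $0,-1,\ldots,-(n-1)$; combined with $\deg(\Hil)\le n$ this forces $\deg(\Hil)=n$ and $\Hil(z)=\gamma\prod_{j=0}^{n-1}(z+j)$, with $\gamma=\cc_1^n[\M]/((n-1)^n n!)$ read off from \eqref{ah}. Using $\prod_{j=0}^{n-1}(z+j)=n!\binom{z+n-1}{n}$ together with the standard identity $\sum_{k\ge 0}\binom{k+n-1}{n}t^k=t/(1-t)^{n+1}$ yields \eqref{gen fct n-1 3}. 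Throughout, the only slightly delicate step is the symmetric-polynomial bookkeeping needed to bring \eqref{c1c222} into the stated closed form in case (a); this is routine but is the main computational obstacle.
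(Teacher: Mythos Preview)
Your proposal is correct and follows essentially the same route as the paper: factor out $\prod_{j=1}^{n-2}(z+j)$ using Theorem~\ref{main theorem}, use the reciprocity of Proposition~\ref{properties P}\,\eqref{3a} to pin down the symmetric form of the remaining quadratic (the paper phrases this via Corollary~\ref{property roots}, but the content is the same), normalize with $\Hil(0)=N_0$, and then read off the Chern numbers from the coefficients via \eqref{ah}. One small imprecision in case~(c): having $n$ distinct roots and $\deg(\Hil)\le n$ does not literally force $\deg(\Hil)=n$; it forces $\Hil\equiv 0$ \emph{or} $\deg(\Hil)=n$, and both are covered by \eqref{H n-1 3} with $\gamma$ possibly zero---which is exactly how the paper handles it.
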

\begin{rmk}\label{integrality a}
Observe that the value of $a$ in \eqref{c1n n-1} cannot be arbitrary, since the following fraction
$$
\frac{4N_0\,n(n-1)}{(n-1)^2-4a}
$$
must be an integer. This follows from the fact that, modulo torsion, $\cc_1=(n-1)\eta_0$ for some $\eta_0\in H^2(\M;\Z)$,
and hence $\frac{\cc_1^n[\M]}{(n-1)^n}$ must be an integer.
\end{rmk}

The following corollary is a straightforward consequence of Proposition \ref{k0=n-1}
 \begin{corollary}\label{pos roots n-1}
 Under the same hypotheses of Proposition \ref{k0=n-1}, we have that:\\
 - If $N_0\neq 0$ then  
 \begin{itemize}
 \item[(1)] The roots of $\Hil(z)$ belong to the canonical strip $\mathcal{S}_{n-1}$ if and only if $\cc_1^n[\M]\geq 0$, or equivalently if and only if $b\geq -2$.
 \item[(2)]  $\Hil(z)$ belongs to $\mathcal{T}_{n-1}$ if and only if $\;\;\;0\leq \cc_1^n[\M]\leq 4N_0 n(n-1)^{n-1}$, or equivalently if and only if $\;\;\;-2\leq b \leq 2\displaystyle\frac{n+1}{n-1}$.
 \end{itemize}
 - If $N_0=0$ then the roots of $\Hil(z)$ do not belong to $\mathcal{S}_{n-1}$.
 \end{corollary}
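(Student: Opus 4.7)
The plan is to apply Proposition~\ref{k0=n-1} directly, since it gives an explicit factored form of $\Hil(z)$ in all three cases ($N_0\neq 0$ with $\cc_1^n[\M]\neq 0$, $N_0\neq 0$ with $\cc_1^n[\M]=0$, and $N_0=0$), and then simply read off the roots.

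\textbf{Step 1: identify the roots.} In every case, $\Hil(z)$ contains the factor $\prod_{j=1}^{n-2}(z+j)$, contributing the real roots $-1,\dots,-(n-2)$, which all lie in the open strip $\mathcal{S}_{n-1}=\{x+\mathrm{i}y\mid -(n-1)<x<0\}$ but not on the line $l_{n-1}=\{x+\mathrm{i}y\mid x=-(n-1)/2\}$. In case~(a) the remaining factor is a scalar multiple of
\[
Q_a(z)=\Bigl(z+\tfrac{n-1}{2}\Bigr)^{\!2}-a,
\]
with roots $-\tfrac{n-1}{2}\pm\sqrt{a}$. These roots lie in $\mathcal{S}_{n-1}$ iff $a<\tfrac{(n-1)^2}{4}$, and they lie on $l_{n-1}$ iff $a\leq 0$ (that is, they are either complex conjugates with real part $-\tfrac{n-1}{2}$ or a double real root at $-\tfrac{n-1}{2}$). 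In case~(b), $\Hil(z)$ has only the roots $-1,\dots,-(n-2)$, all in $\mathcal{S}_{n-1}$, and the ``extra factor'' is a nonzero constant, vacuously a member of $\mathcal{T}_{n-1}$.

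\textbf{Step 2: translate to Chern numbers via \eqref{c1n n-1} and \eqref{c1n n-1 b}.} Since $\cc_1^n[\M]=\dfrac{4N_0\,n(n-1)^{n+1}}{(n-1)^2-4a}$ with $N_0>0$, one checks the equivalences
\[
a<\tfrac{(n-1)^2}{4}\ \Longleftrightarrow\ \cc_1^n[\M]>0,\qquad
a\leq 0\ \Longleftrightarrow\ 0<\cc_1^n[\M]\leq 4N_0\,n(n-1)^{n-1}.
\]
Combining case~(a) with case~(b) (where $\cc_1^n[\M]=0$ and both conditions trivially hold), part~(1) becomes $\cc_1^n[\M]\geq 0$ and part~(2) becomes $0\leq \cc_1^n[\M]\leq 4N_0\,n(n-1)^{n-1}$. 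The equivalent conditions on $b$ then follow from $\cc_1^n[\M]=N_0(b+2)(n-1)^n$: $\cc_1^n[\M]\geq 0 \Leftrightarrow b\geq-2$, and the upper bound gives $b\leq\tfrac{2(n+1)}{n-1}$.

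\textbf{Step 3: the case $N_0=0$.} By Proposition~\ref{k0=n-1}(c), $\Hil(z)=\gamma\prod_{j=0}^{n-1}(z+j)$, so (assuming $\Hil\not\equiv 0$) the roots include $0$ and $-(n-1)$, which lie on the boundary of the open strip $\mathcal{S}_{n-1}$ and hence not in $\mathcal{S}_{n-1}$; this settles the last assertion.

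No obstacle is expected, as the statement is a direct bookkeeping consequence of the explicit formulas in Proposition~\ref{k0=n-1}; the only care needed is to treat the boundary values $a=0$ (where $Q_a$ has a repeated real root on $l_{n-1}$) and $\cc_1^n[\M]=0$ correctly so that the inequalities are nonstrict on the appropriate side.
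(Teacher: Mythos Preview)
Your proof is correct and follows exactly the approach the paper intends: the paper states only that the corollary ``is a straightforward consequence of Proposition~\ref{k0=n-1}'' and gives no argument, and you have supplied precisely the expected bookkeeping from the explicit factorizations \eqref{H n-1 1}, \eqref{H n-1 2}, \eqref{H n-1 3} together with the Chern-number formulas \eqref{c1n n-1} and \eqref{c1n n-1 b}. One harmless slip: your aside that the roots $-1,\dots,-(n-2)$ are ``not on the line $l_{n-1}$'' is false when $n$ is odd (e.g.\ $n=3$ gives the root $-1=-(n-1)/2$), but this remark plays no role in the argument, since membership in $\mathcal{T}_{n-1}$ only constrains the roots of the factor $C(z)$, not those of $\prod_{j=1}^{n-2}(z+j)$.
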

 As a result of the analysis carried out when $\k0=n-1$, we can strengthen Theorem \ref{RV theorem}.
 \begin{corollary}\label{RV1}
 Under the same hypotheses of Proposition \ref{k0=n-1}, assume that $N_0=1$ and $n>5$. Then $\Hil(z)$ belongs to $\mathcal{T}_{n-1}$ if and only if $\U(t)$ has its roots on the unit circle. 
 \end{corollary}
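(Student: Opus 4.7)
The plan is to reduce the claimed equivalence to a comparison of two intervals of admissible values of the single real parameter $b$ appearing in $\U(t)=1+bt+t^2$ (see Proposition~\ref{gen fct hilbert} and \eqref{gen fct n-1 1}). One direction, namely ``roots of $\U(t)$ on the unit circle $\Rightarrow \Hil(z)\in\mathcal{T}_{n-1}$'', is already provided by Theorem~\ref{RV theorem} and uses neither $N_0=1$ nor the hypothesis $n>5$ (note that $\Hil\not\equiv 0$ since $\Hil(0)=N_0=1$). So only the converse requires work.

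The first step is to use Proposition~\ref{gen fct hilbert}, which for $\k0=n-1$ gives $\U(t)=N_0+bN_0\,t+N_0\,t^2$ with $bN_0\in\Z$; the hypothesis $N_0=1$ then forces $b\in\Z$. The second step is an elementary root-location computation for the reciprocal quadratic $t^2+bt+1$, whose two complex roots have product $1$. Its discriminant is $b^2-4$, and one checks that its roots lie on the unit circle if and only if $|b|\le 2$: the discriminant is either negative (giving a complex conjugate pair of modulus one), zero (giving the double real root $\mp1$), or positive (yielding two distinct real roots, one inside and one outside the unit disc).

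The final step is to match this with Corollary~\ref{pos roots n-1}(2), which characterises $\Hil(z)\in\mathcal{T}_{n-1}$ by the inequality $-2\le b\le \frac{2(n+1)}{n-1}=2+\frac{4}{n-1}$. For $n>5$ this upper bound is strictly less than $3$, so integrality of $b$ collapses the allowed interval to $\{-2,-1,0,1,2\}$, i.e.\ $|b|\le 2$, which matches exactly the condition from the previous step for the roots of $\U(t)$ to lie on the unit circle. This gives the converse.

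The main (mild) obstacle I foresee is handling the boundary case $b=-2$, which falls under case (b) of Proposition~\ref{k0=n-1} where $\deg(\Hil)=n-2$: there $\U(t)=(1-t)^2$ has a double root at $t=1$ on the unit circle, and $\Hil(z)\in\mathcal{T}_{n-1}$ holds vacuously (the factor $C(z)$ in Definition~\ref{def: RVGo} degenerating to a constant), so the equivalence persists. The hypothesis $n>5$ is sharp: for $n=5$ one would additionally allow $b=3$, but then $\U(t)=t^2+3t+1$ has two real roots $(-3\pm\sqrt{5})/2$ off the unit circle, while $\Hil(z)$ still lies in $\mathcal{T}_{n-1}$, and the equivalence would fail.
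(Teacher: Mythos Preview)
Your proof is correct and follows essentially the same route as the paper's: both reduce the question to the parameter $b$ via Proposition~\ref{k0=n-1} and Corollary~\ref{pos roots n-1}(2), use $N_0=1$ to force $b\in\Z$, and observe that for $n>5$ the interval $[-2,\,2+\tfrac{4}{n-1}]$ intersects $\Z$ in $\{-2,-1,0,1,2\}$, which coincides with the integers $b$ for which $t^2+bt+1$ has its roots on the unit circle. Your write-up is in fact more complete than the paper's, since you spell out both implications (invoking Theorem~\ref{RV theorem} for one direction and the discriminant analysis for the other) and explicitly treat the boundary case $b=-2$.
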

 \begin{proof}
 If $N_0=1$ then by Proposition \ref{k0=n-1} we know that $b$ is an integer.  
If $n>5$, from Corollary \ref{pos roots n-1} we can see that $\Hil(z)$ belongs to
$\mathcal{T}_{n-1}$ if and only if $-2\leq b\leq 2$. Since $b$ is an integer, for all such values of $b$ the polynomial $\U(t)=1+bt+t^2$ has its roots on the unit circle.  
 \end{proof}
\begin{rmk}\label{RV n-1}
For $2\leq n\leq 5$, we have that $2\displaystyle\frac{n+1}{n-1}\geq 3$; however
for $b\geq 3$, the roots of $\U(t)$ are not on the unit circle. So for $2\leq n\leq 5$,
there may exist manifolds whose associated Hilbert polynomial belongs to $\mathcal{T}_{n-1}$, but the corresponding $\U(t)=1+bt+t^2$ does not have its roots on the unit circle: consider for example the Fano threefold $V_5$
in Example \ref{examples 6} (3), for which $b=3$ and the corresponding Hilbert polynomial is given by $\Hil_{V_5}(z)=\frac{1}{6}\big[5z^2+10z+6\big](z+1)$. 
\end{rmk}

\begin{proof}[Proof of Proposition \ref{k0=n-1}]
(a) If $N_0\neq 0$ then, by Proposition \ref{properties P} \eqref{1a} we have that $\Hil(z)\not\equiv 0$. Moreover by \eqref{Hilbert pol}, if $\cc_1^n[\M]\neq 0$ then $\deg(\Hil)=n$.
By Theorem \ref{main theorem} \eqref{H=0 even}, if $n\geq 3$ $\Hil(z)$ has roots $-1,-2,\ldots, -n+2$. By Corollary 
\ref{property roots}, the remaining two roots belong to $\mathcal{C}_{n-1}$ and, by Proposition \ref{properties P} \eqref{3a},
they are of the form $-\frac{n-1}{2}-x$, $-\frac{n-1}{2}+x$. Moreover $a:=x^2\neq \frac{(n-1)^2}{4}$
since by Proposition \ref{properties P} \eqref{1a} and \eqref{3a}, $\Hil(0)=N_0$, $\Hil(-n+1)=(-1)^nN_0$ and by assumption $N_0\neq 0$.  Thus $\Hil(z)=\alpha \Big(z^2+(n-1)z+\frac{(n-1)^2}{4}-a\Big)\prod_{j=1}^{n-2}(z+j)$,
where $\alpha\in \R$ can be found by imposing $\Hil(0)=N_0$, obtaining \eqref{H n-1 1}.
Equations \eqref{c1n n-1} and \eqref{c1c222} come from combining \eqref{Hilbert pol} with \eqref{H n-1 1}.

(b) If $N_0\neq 0$ and $\cc_1^n[\M]=0$ then, by Proposition \ref{properties P} \eqref{1a} we have that $\Hil(z)\not\equiv 0$
and, by \eqref{Hilbert pol}, $\deg(\Hil)\leq n-2$.
By Theorem \ref{main theorem}, if $n\geq 3$ $\Hil(z)$ has $n-2$ roots given by $-1,-2,\ldots,-n+2$;
moreover if $n=2$ it must be a non-zero constant polynomial. Thus $\Hil(z)$ has degree $n-2$ and it is of the form
$\Hil(z)=\beta \prod_{j=1}^{n-2}(z+j)=\beta\sum_{h=0}^{n-2}z^h\sigma_{n-h-2}(1,2,\ldots,n-2)$. By Proposition \ref{properties P} \eqref{1a} we have $\beta=\frac{N_0}{(n-2)!}$,
and \eqref{H n-1 2} follows.
Equation \eqref{c1c2 n-1} can be obtained from \eqref{H n-1 2} and \eqref{ah}
by taking $h=n-2$.

In order to prove \eqref{gen fct n-1 1} for $\cc_1^n[\M]\neq 0$, observe that since $\deg(\Hil)=n$, $N_0\neq 0$ and $\k0=n-1$, from Proposition \ref{gen fct hilbert} and Corollary \ref{U palindrom} it follows
that $\U(t)=N_0(1+b\,t+t^2)$ for some $b\in \R$. Thus we have that 
$$
\Gen(t)=N_0 \frac{1+b\,t+t^2}{(1-t)^{n+1}} = N_0\sum_{k\geq 0}\left[ \binom{n+k-2}{n}+b\binom{n+k-1}{n}+\binom{n+k}{n}\right]t^k\,, 
$$
and by definition of $\Gen(t)$ we have that $N_0(b+n+1)=\Hil(1)$.  Since $\Hil(1)$ is an integer, it follows that $b\,N_0$ must be an integer.
Moreover, by \eqref{H n-1 1} we have that $\displaystyle\frac{\Hil(1)}{N_0}=\frac{4(n-1)\big[n+\frac{(n-1)^2}{4}-a\big]}{\big[(n-1)^2-4a\big]}=b+n+1$, thus obtaining $b$ in terms of $a$, and the expressions of $\cc_1^n[\M]$ and $\cc_1^{n-2}\cc_2[\M]$ in terms of $b$ follow from \eqref{c1n n-1} and \eqref{c1c222}.

The proof of \eqref{gen fct n-1 1} when $\cc_1^n[\M]=0$ also follows from Proposition \ref{gen fct hilbert}, and the details are left to the reader.

(c) If $N_0=0$ then, by Proposition \ref{properties P} \eqref{1a} and \eqref{3a}, and Theorem \ref{main theorem} \eqref{H=0 even}, $\Hil(z)$ has $n$ roots given by $0,-1,-2,\ldots,-n+1$. If $\cc_1^n[\M]=0$ then
by \eqref{Hilbert pol} and \eqref{ah} we have that $\deg(\Hil)\leq n-2$, hence $\Hil(z)\equiv 0$ and \eqref{H n-1 3} follows.
Otherwise $\Hil(z)=\gamma \prod_{j=0}^{n-1}(z+j)$ where the expression for $\gamma$ can be obtained by using
\eqref{Hilbert pol}, imposing that $a_n=\gamma$.

The proof of \eqref{gen fct n-1 3} follows easily from Proposition \ref{gen fct hilbert}, and the details are left to the reader.

\end{proof}

Proposition \ref{k0=n-1} implies that the Chern numbers $\cc_1^n[\M]$ and $\cc_1^{n-2}\cc_2[\M]$ are related
by the following formula.
\begin{corollary}\label{relation c122}
Under the same hypotheses of Proposition \ref{k0=n-1} we have that 
$$
\cc_1^{n-2}\cc_2[\M]-\frac{n(n-3)}{2(n-1)^2}\cc_1^n[\M] = 12 N_0(n-1)^{n-2}
$$
\end{corollary}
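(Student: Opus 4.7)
The plan is to verify the identity separately in the three cases of Proposition \ref{k0=n-1}: $N_0 \neq 0$ with $\cc_1^n[\M] \neq 0$ (case (a)), $N_0 \neq 0$ with $\cc_1^n[\M] = 0$ (case (b)), and $N_0 = 0$ (case (c)). In the first two cases the formulas \eqref{c1n n-1 b} and \eqref{c1c2 n-1 b} already express both Chern numbers in terms of the parameter $b$ (with $b=-2$ recovering case (b)), so the task is a one-line algebraic check. Plugging into the left-hand side:
\begin{equation*}
N_0(n-1)^{n-2}\Bigl[12+\tfrac{(b+2)n(n-3)}{2}\Bigr] - \tfrac{n(n-3)}{2(n-1)^{2}}\cdot N_0(b+2)(n-1)^{n},
\end{equation*}
the $(b+2)$-terms cancel, leaving $12 N_0(n-1)^{n-2}$, as required.

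The remaining case $N_0=0$ is slightly more substantial, because here the right-hand side is $0$ and we must show the linear combination on the left vanishes identically. From \eqref{H n-1 3}, $\Hil(z)=\gamma\prod_{j=0}^{n-1}(z+j)$ with $\gamma=\cc_1^n[\M]/((n-1)^n n!)$. I would extract the coefficient of $z^{n-2}$: since one of the roots is $0$, this coefficient equals $\gamma\,\sigma_2(1,2,\dots,n-1)$, which by the Stirling formula \eqref{sigma2} (applied with $n-1$ in place of $n$) is
\begin{equation*}
\gamma\cdot\frac{(3n-1)n(n-1)(n-2)}{24}.
\end{equation*}
On the other hand, by \eqref{ah} this same coefficient equals $\frac{1}{12(n-1)^{n-2}(n-2)!}(\cc_1^n+\cc_1^{n-2}\cc_2)[\M]$. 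Equating, solving for $\cc_1^{n-2}\cc_2[\M]$ and simplifying the arithmetic $(n-2)(3n-1) - 2(n-1)^2 = n(n-3)$ gives precisely
\begin{equation*}
\cc_1^{n-2}\cc_2[\M] = \frac{n(n-3)}{2(n-1)^{2}}\cc_1^n[\M],
\end{equation*}
so the left-hand side of the corollary vanishes, matching $12 N_0(n-1)^{n-2}=0$.

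No real obstacle is expected: both cases reduce to bookkeeping, with the only mildly delicate point being the $N_0=0$ case, where one must be careful that the formulas in Proposition \ref{k0=n-1}(c) still force the relation between $\cc_1^n[\M]$ and $\cc_1^{n-2}\cc_2[\M]$ even though neither is individually determined. This is handled automatically by reading off the coefficient of $z^{n-2}$ from the explicit factorization of $\Hil(z)$, since $\deg(\Hil)=n$ in that case and all its roots are prescribed.
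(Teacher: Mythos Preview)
Your proof is correct and follows the same approach as the paper's: for $N_0\neq 0$ you use the parametrized formulas \eqref{c1n n-1 b} and \eqref{c1c2 n-1 b} and cancel the $(b+2)$-terms, and for $N_0=0$ you read off the coefficient $a_{n-2}$ from the explicit factorization \eqref{H n-1 3} via $\sigma_2(1,\dots,n-1)$, exactly as the paper does. One tiny imprecision: in the $N_0=0$ case you write ``since $\deg(\Hil)=n$,'' but when additionally $\cc_1^n[\M]=0$ one has $\gamma=0$ and $\Hil\equiv 0$; your coefficient-matching argument still goes through trivially (both sides are zero), so this does not affect correctness.
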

\begin{proof}
When $N_0\neq 0$ the claim follows from \eqref{c1n n-1 b} and \eqref{c1c2 n-1 b}.

If $N_0=0$ and $\cc_1^n[\M]=0$ then from \eqref{H n-1 3} we have $\Hil(z)\equiv 0$, which, by \eqref{ah} implies that 
$$a_{n-2}=\frac{1}{12(n-1)^{n-2}(n-2)!}\big(\cc_1^n + \cc_1^{n-2}\cc_2\big)[\M]=0\,,$$
thus implying $\cc_1^{n-2}\cc_2[\M]=0$, and the claim follows. 

Otherwise, if $N_0\neq 0$ and $\cc_1^n[\M]\neq 0$, from \eqref{H n-1 3} and \eqref{sigma 2} we have that 
$a_{n-2}$ is 
\begin{equation}\label{an-2}
a_{n-2}=\gamma \left[ \begin{array}{c} n \\ n-2 \end{array} \right]= \gamma \frac{(3n-1)n(n-1)(n-2)}{24}\,,
\end{equation}
where $\gamma=\frac{1}{(n-1)^n n!}\cc_1^n[\M]$,
and the claim follows from comparing the general expression of $a_{n-2}$ with \eqref{an-2}.
\end{proof}

As it will be proved in Prop.\ \ref{dim 4}, if $\ac$ is an $S^1$-space of dimension $4$, the index $\k0$ cannot be zero.
Hence it is not restrictive to assume $n\geq 3$ for $\k0=n-2$.
\begin{prop}[$\k0=\mathbf{n-2}$]\label{k0=n-2}
Let $\ac$ be an $S^1$-space of dimension $2n\geq 6$ with index $\k0=n-2$.
\begin{itemize}
\item[(a)]\label{n-2 a} If $N_0\neq 0$ and $\cc_1^n[\M]\neq 0$ then 
\begin{equation}\label{H n-2 1}
\Hil(z)=\frac{4\,N_0}{(n-2)!\big[(n-2)^2-4a\big]}\Big(2z+n-2\Big)\Big(z^2+(n-2)z+\frac{(n-2)^2}{4}-a\Big)\prod_{j=1}^{n-3}(z+j)\,,
\end{equation}
where $a\in \R$ is not equal to $\frac{(n-2)^2}{4}$. Moreover
\begin{equation}\label{c1n n-2}
\cc_1^n[\M]=\frac{8\,N_0\,n(n-1)(n-2)^n}{(n-2)^2-4a}\,,
\end{equation}
and
\begin{equation}\label{c1c2 n-2 2 rmk}
\cc_1^{n-2}\cc_2[\M]=\frac{4N_0(n-2)^{n-2}(24-24a-30n+17n^2-6n^3+n^4)}{(n-2)^2-4a}\,. 
\end{equation}
\\

\item[(b)]\label{n-2 b} If $N_0\neq 0$ and $ \cc_1^n[\M]= 0$ then
\begin{equation}\label{H n-2 2}
\Hil(z)=\frac{N_0}{(n-2)!}\Big(2z+n-2\Big)\prod_{j=1}^{n-3}(z+j)\,,
\end{equation}
and
\begin{equation}\label{c1c2 n-2}
\cc_1^{n-2}\cc_2[\M]=24\, N_0 (n-2)^{n-2}\,.
\end{equation}
\end{itemize}
Moreover, in \emph{(a)} and \emph{(b)}, the generating function of $\{\Hil(k)\}_{k\in \mathbb{N}}$ is given by 
\begin{equation}\label{gen fct n-2}
\Gen(t)=N_0\frac{1+b\,t+b\,t^2+t^3}{(1-t)^{n+1}}  
\end{equation}
where $b$ is such that $b\,N_0$ is an integer and 
\begin{equation}\label{c1n b}
\cc_1^n[\M]=2N_0(b+1)(n-2)^n\,,
\end{equation}
\begin{equation}\label{c1c2 b}
\cc_1^{n-2}\cc_2[\M]=N_0(n-2)^{n-2}\big[24+(b+1)(n-2)(n-3)\big]\,,
\end{equation}
and case \emph{(b)} corresponds to taking $b=-1$.
\begin{itemize}
\item[(c)]\label{n-2 c}
If $N_0=0$ then 
\begin{equation}\label{H n-2 3}
\Hil(z)=\gamma \Big(z+\frac{n-2}{2}\Big)\prod_{j=0}^{n-2}(z+j)\,,
\end{equation}
where $\gamma=\frac{1}{(n-2)^n n!}\cc_1^n[\M]$.
\end{itemize}
Moreover, the generating function of $\{\Hil(k)\}_{k\in \mathbb{N}}$ is given by 
\begin{equation}\label{gen fct n-2 3}
\Gen(t)=\frac{\gamma}{2}n!\frac{t+t^2}{(1-t)^{n+1}}\,.  
\end{equation}

\end{prop}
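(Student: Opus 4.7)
The plan is to follow the blueprint of Proposition \ref{k0=n-1}, combining four pieces of information about $\Hil(z)$: the forced integer zeros $\{-1,\ldots,-(k_0-1)\}$ from Theorem \ref{main theorem}, the palindromic symmetry $\Hil(z)=(-1)^n \Hil(-k_0-z)$ from Proposition \ref{properties P}(\ref{3a}), the extra zero at $-k_0/2$ with its possible doubling from Corollary \ref{extra root -k02}, the confinement of all roots to the cross $\mathcal{C}_{k_0}$ from Corollary \ref{property roots} (which is available precisely because $k_0=n-2$), and finally the normalisation $\Hil(0)=N_0$ from Proposition \ref{properties P}(\ref{1a}). The key point peculiar to $k_0=n-2$ is that $n\equiv k_0\pmod{2}$ automatically, so the zero at $-\tfrac{n-2}{2}$ is always present and has multiplicity at least two when $n$ (equivalently $k_0$) is even; in either parity this accounts for exactly $n-2$ roots of $\Hil(z)$ counted with multiplicity, leaving two roots free.

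For case (a) the hypothesis $\cc_1^n[\M]\neq 0$ forces $\deg(\Hil)=n$ by the definition of $a_n$ in \eqref{ah}. The two free roots lie on $\mathcal{C}_{n-2}$ and are exchanged by $z\mapsto -(n-2)-z$, hence they have the form $-\tfrac{n-2}{2}\pm x$ for some $x\in\R\cup i\R$; writing $a=x^2$, I can immediately factor $\Hil(z)=\alpha\,(2z+n-2)\bigl(z^2+(n-2)z+\tfrac{(n-2)^2}{4}-a\bigr)\prod_{j=1}^{n-3}(z+j)$, and then $\Hil(0)=N_0$ pins down $\alpha$ and produces \eqref{H n-2 1}. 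Comparing the coefficient of $z^n$ with \eqref{Hilbert pol} yields \eqref{c1n n-2}, and comparing the coefficient of $z^{n-2}$ (using the elementary symmetric function $\sigma_2$ of the roots, evaluated by the same bookkeeping as in the proof of Proposition \ref{cor n}) yields \eqref{c1c2 n-2 2 rmk}; the exclusion $a\neq (n-2)^2/4$ is just the requirement that $\Hil(0)$ not accidentally vanish.

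Case (b) is the degenerate analogue: $\cc_1^n[\M]=0$ forces $\deg(\Hil)\leq n-1$ via $a_n=0$, and Proposition \ref{properties P}(\ref{4a}) drops this further to $\deg(\Hil)\leq n-2$; since the forced roots already exhaust this degree, $\Hil(z)$ must equal the product in \eqref{H n-2 2} up to a constant fixed again by $H(0)=N_0$, and \eqref{c1c2 n-2} is then a direct coefficient extraction. Case (c) is similar but with the extra zeros $\Hil(0)=0$ and (by symmetry) $\Hil(-(n-2))=0$: the root count with multiplicity reaches $n$, so either $\Hil\equiv 0$ or $\deg(\Hil)=n$ and $\Hil$ has the product form \eqref{H n-2 3} with leading coefficient $\gamma=a_n=\frac{\cc_1^n[\M]}{(n-2)^n\,n!}$ from \eqref{Hilbert pol}.

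For the generating function and the reparametrisation in $b$, I invoke Proposition \ref{gen fct hilbert} and Corollary \ref{U palindrom}: in cases (a) and (b), $\U(t)$ is palindromic of length $m+1-k_0\in\{1,3\}$ with $\U(0)=N_0$, so $\U(t)=N_0(1+bt+bt^2+t^3)$ and $b=-1$ exactly captures the reduction $\U(t)=(1-t)^2(1+t)$ corresponding to $\deg(\Hil)=n-2$. The relation to $a$ comes from extracting the coefficient of $t^1$ in $\Gen(t)$, which equals $\Hil(1)=N_0(n+1+b)$; comparing with the direct evaluation of \eqref{H n-2 1} at $z=1$ and simplifying gives $(n-2)^2-4a=\frac{4n(n-1)}{b+1}$, and substituting this into \eqref{c1n n-2} and \eqref{c1c2 n-2 2 rmk} produces \eqref{c1n b} and \eqref{c1c2 b}. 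Case (c) is handled the same way with $\U(0)=0$. The main obstacle I anticipate is the algebraic verification of \eqref{c1c2 n-2 2 rmk}: computing $\sigma_2$ of the five-element multiset $\{1,2,\ldots,n-3,\tfrac{n-2}{2}\pm\sqrt{a}\}$ and combining the result with \eqref{c1n n-2} to eliminate $a$ requires careful symmetric-function bookkeeping, but it is routine and parallels the argument already carried out for $k_0=n$ in Proposition \ref{cor n}.
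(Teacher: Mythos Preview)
Your proposal is correct and follows essentially the same approach as the paper's own proof, which is merely sketched there with the remark that it parallels Proposition \ref{k0=n-1} and that ``the rest of the proof is left to the reader.'' Your outline in fact supplies more detail than the paper does, including the parity discussion of the forced root at $-\tfrac{n-2}{2}$, the explicit handling of case (b) via Proposition \ref{properties P}(\ref{4a}), and the derivation of the $b$-parametrisation from $\Hil(1)=N_0(n+1+b)$, all of which are exactly the steps the paper leaves implicit.
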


\begin{rmk}\label{integrality a2}
The same comment in Remark \ref{integrality a} applies here: the value of $a$ cannot be arbitrary, since the following fraction
$$
\frac{8N_0\,n(n-1)}{(n-2)^2-4a}
$$
must be an integer.
\end{rmk}

The following corollary is very similar to Corollary \ref{pos roots n-1}, and is a straightforward consequence of Proposition \ref{k0=n-2}
 \begin{corollary}\label{pos roots n-2}
 Under the same hypotheses of Proposition \ref{k0=n-2}, we have that:\\
 - If $N_0\neq 0$ then  
 \begin{itemize}
 \item[(1)] The roots of $\Hil(z)$ belong to the canonical strip $\mathcal{S}_{n-2}$ if and only if $\cc_1^n[\M]\geq 0$, or equivalently if and only if $b\geq -1$.
 \item[(2)] $\Hil(z)$ belongs to $\mathcal{T}_{n-2}$ if and only if $\;\;\;0\leq \cc_1^n[\M]\leq 8N_0 n(n-1)(n-2)^{n-2}$, or equivalently if and only if $\;\;\;-1\leq b \leq \displaystyle\frac{3n^2-4}{(n-2)^2}$.
 \end{itemize}
 - If $N_0=0$ then the roots of $\Hil(z)$ do not belong to $\mathcal{S}_{n-2}$.
 \end{corollary}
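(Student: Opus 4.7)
The plan is to read off the roots of $\Hil(z)$ directly from the explicit factorisations provided by Proposition \ref{k0=n-2}, and to translate the location of those roots into conditions on $\cc_1^n[\M]$ (and hence on $b$) via \eqref{c1n n-2} and \eqref{c1n b}. Because each factorisation displays the roots essentially explicitly, very little computation is required beyond checking when a single pair of roots lands in a specified region.

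First I would handle the generic case $N_0\neq 0$ and $\cc_1^n[\M]\neq 0$. Equation \eqref{H n-2 1} exhibits the roots of $\Hil(z)$ as the integers $-1,\ldots,-(n-3)$, which automatically lie in the strip $\mathcal{S}_{n-2}$; the point $-\tfrac{n-2}{2}$, which lies on the symmetry line $l_{n-2}$; and the pair $-\tfrac{n-2}{2}\pm\sqrt{a}$. I would then split into cases on the sign of $a$: for $a<0$ the pair is complex with real part $-\tfrac{n-2}{2}$ (hence on $l_{n-2}$ and in $\mathcal{S}_{n-2}$), for $0\le a<\tfrac{(n-2)^2}{4}$ it is real and inside $\mathcal{S}_{n-2}$ but off $l_{n-2}$, and for $a\ge\tfrac{(n-2)^2}{4}$ at least one of the pair leaves $\mathcal{S}_{n-2}$. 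Through \eqref{c1n n-2} each condition converts directly into one on $\cc_1^n[\M]$: the pair sits in $\mathcal{S}_{n-2}$ iff $\cc_1^n[\M]>0$, and on $l_{n-2}$ iff $0<\cc_1^n[\M]\le 8N_0 n(n-1)(n-2)^{n-2}$.

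The boundary case $\cc_1^n[\M]=0$ is covered by Proposition \ref{k0=n-2}(b): formula \eqref{H n-2 2} makes every root either an integer in $\mathcal{S}_{n-2}$ or the point $-\tfrac{n-2}{2}\in l_{n-2}$, so $\Hil(z)\in\mathcal{T}_{n-2}$ and in particular all roots lie in $\mathcal{S}_{n-2}$. Concatenating this with the previous case yields parts (1) and (2), and the equivalent statements in terms of $b$ follow from $\cc_1^n[\M]=2N_0(b+1)(n-2)^n$ after the elementary simplification $\tfrac{4n(n-1)-(n-2)^2}{(n-2)^2}=\tfrac{3n^2-4}{(n-2)^2}$.

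Finally, for $N_0=0$, Proposition \ref{k0=n-2}(c) gives $\Hil(z)=\gamma\bigl(z+\tfrac{n-2}{2}\bigr)\prod_{j=0}^{n-2}(z+j)$; if $\gamma\neq 0$ then $z=0$ is a root and does not lie in the open strip $\mathcal{S}_{n-2}$, while if $\gamma=0$ the assertion is vacuous. The main \emph{obstacle} is purely bookkeeping, namely keeping track of the strict versus non-strict inequalities at the boundary values $a=0$ and $a=\tfrac{(n-2)^2}{4}$, and converting cleanly between the parameters $a$ and $b$; no genuinely new argument is needed beyond what is already established in Proposition \ref{k0=n-2}.
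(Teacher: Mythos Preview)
Your argument is correct and is exactly the approach the paper has in mind: the paper does not spell out a proof but simply states that the corollary is ``a straightforward consequence of Proposition \ref{k0=n-2}'', and your case analysis on the parameter $a$ (together with the boundary case (b) and the case $N_0=0$) is precisely how one makes that consequence explicit. The translation to $b$ via \eqref{c1n b} and the observation that $z=0$ lies on the boundary of the open strip $\mathcal{S}_{n-2}$ when $N_0=0$ are handled correctly; the only cosmetic point is that $a=\tfrac{(n-2)^2}{4}$ is already excluded in Proposition \ref{k0=n-2}(a), so your ``$a\ge\tfrac{(n-2)^2}{4}$'' should really read ``$a>\tfrac{(n-2)^2}{4}$'', but this does not affect the argument.
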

In analogy with Corollary \ref{RV1}, we have the following:
\begin{corollary}\label{RV2}
 Under the same hypotheses of Proposition \ref{k0=n-2}, assume that $N_0=1$ and $n>14$. Then $\Hil(z)$ belongs to $\mathcal{T}_{n-2}$ if and only if $\U(t)$ has its roots on the unit circle. 
 \end{corollary}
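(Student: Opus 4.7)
The statement is the $\k0=n-2$ analogue of Corollary \ref{RV1}, and I would follow an analogous strategy. The $\impliedby$ direction is immediate from Theorem \ref{RV theorem} of Rodriguez--Villegas, so the content lies in the forward implication: assuming $\Hil(z)\in \mathcal{T}_{n-2}$, one must show that all three roots of $\U(t)=1+bt+bt^2+t^3$ lie on the unit circle.

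First, since $N_0=1$, Proposition \ref{k0=n-2} guarantees that $b\in\Z$, while Corollary \ref{pos roots n-2}(2) gives $-1\leq b\leq (3n^2-4)/(n-2)^2$. A short elementary check shows that the inequality $(3n^2-4)/(n-2)^2<4$ is equivalent to $n^2-16n+20>0$, which holds exactly for $n\geq 15$. Combined with the integrality of $b$, the hypothesis $n>14$ pins $b$ down to the five possibilities $\{-1,0,1,2,3\}$.

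Next, I would factor $\U(t)$ explicitly. The identity $\U(-1)=1-b+b-1=0$ holds for every $b$, so polynomial division yields
\[
\U(t)=(1+t)\bigl(t^2+(b-1)t+1\bigr).
\]
The quadratic factor is palindromic of degree two with constant term $1$, hence its two roots lie on the unit circle if and only if its discriminant $(b-1)^2-4$ is non-positive, i.e.\ if and only if $-1\leq b\leq 3$. Since this range contains all five admissible integer values of $b$, we get that $\U(t)$ has all its roots on the unit circle in every case.

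The main obstacle, and the source of the hypothesis $n>14$, is matching the $\mathcal{T}_{n-2}$-constraint $b\leq (3n^2-4)/(n-2)^2$ against the unit-circle constraint $b\leq 3$ for the quadratic factor. For smaller $n$ the former bound exceeds $3$, so integer values such as $b=4$ could arise for which $\Hil(z)\in \mathcal{T}_{n-2}$ while the quadratic factor of $\U(t)$ has two real roots off the unit circle, breaking the equivalence in complete parallel with Remark \ref{RV n-1} for $\k0=n-1$.
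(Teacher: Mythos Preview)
Your proof is correct and follows essentially the same approach as the paper's: reduce to the integer range $-1\leq b\leq 3$ via Corollary~\ref{pos roots n-2}(2) and the hypothesis $n>14$, then verify that $\U(t)=1+bt+bt^2+t^3$ has all roots on the unit circle for these values. Your argument is in fact more explicit than the paper's---you spell out the factorization $\U(t)=(1+t)(t^2+(b-1)t+1)$ and the inequality $(3n^2-4)/(n-2)^2<4\iff n^2-16n+20>0$, whereas the paper simply asserts these facts---but the underlying strategy is identical.
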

\begin{proof}
If $N_0=1$ then by Proposition \ref{k0=n-2}, we know that $b$ is an integer.  If $n>14$, from Corollary \ref{pos roots n-2} we can see that $\Hil(z)$ belong to
$\mathcal{T}_{n-2}$ if and only if $-1\leq b\leq 3$. Since $b$ is an integer, for all such values of $b$ the polynomial $\U(t)=1+bt+bt^2+t^3$ has its roots on the unit circle.
\end{proof}
\begin{rmk}\label{RV n-2}
For $3\leq n\leq 14$, we have that $\displaystyle\frac{3n^2-4}{(n-2)^2}\geq 4$; however
for $b\geq 4$, the roots of $\U(t)=1+bt+bt^2+t^3$ are not on the unit circle. In conclusion, we can say that for $3\leq n\leq 14$,
there may exist manifolds whose associated Hilbert polynomial belongs to $\mathcal{T}_{n-2}$, but the corresponding $\U(t)$ does not have its roots on the unit circle: consider for example the Fano threefold $V_{22}$
in Example \ref{examples 6-1} (2), for which $b=10$ and the corresponding Hilbert polynomial is given by $\Hil_{V_{22}}(z)=\frac{1}{6}\big[11z^2+11z+6\big](2z+1)$.
\end{rmk}

\begin{proof}[Proof of Proposition \ref{k0=n-2}]

The proof of this Proposition is very similar to that of Proposition \ref{k0=n-1}, and here we only sketch the first part.
(a)  If $N_0\neq 0$ then, by Proposition \ref{properties P} \eqref{1a} we have that $\Hil(z)\not\equiv 0$. Moreover by \eqref{Hilbert pol}, if $ \cc_1^n[\M]\neq 0$ then $\deg(\Hil)=n$.
By Theorem \ref{main theorem} \eqref{H=0 even}, for $n\geq 4$ $\Hil(z)$ has roots $-1,-2,\ldots, -n+3$. By Corollary \ref{extra root -k02} one of the remaining three roots is $-\frac{n-2}{2}$. By Corollary \ref{property roots} the remaining two roots are on $\mathcal{C}_{n-2}$, and
by Proposition \ref{properties P} \eqref{3a} they are of the form $-\frac{n-2}{2}-x$, $-\frac{n-2}{2}+x$, for some $x\in \R$.
Moreover $a:=x^2\neq \frac{(n-2)^2}{4}$
since by Proposition \ref{properties P} \eqref{1a} and \eqref{3a}, $\Hil(0)=N_0$, $\Hil(-n+2)=(-1)^nN_0$ and by assumption $N_0\neq 0$. 
It follows that the Hilbert polynomial is of the form
$$
\Hil(z)=\alpha \Big(2z+n-2\Big)\Big(z^2+(n-2)z+\frac{(n-2)^2}{4}-a\Big)\prod_{j=1}^{n-3}(z+j)\,,
$$
where $\alpha$ can be found by imposing $\Hil(0)=N_0$, thus obtaining \eqref{H n-2 1}. 
The rest of the proof is left to the reader. 
\end{proof}

Similarly to the case $\k0=n-1$, Proposition \ref{k0=n-2} implies that the Chern numbers $\cc_1^n[\M]$ and
$\cc_1^{n-2}\cc_2[\M]$ are related by the following formula.
\begin{corollary}\label{relation c122 2}
Under the same hypotheses of Proposition \ref{k0=n-2} we have that 
$$
\cc_1^{n-2}\cc_2[\M]-\frac{n-3}{2(n-2)}\cc_1^n[\M]= 24 N_0 (n-2)^{n-2}
$$
\end{corollary}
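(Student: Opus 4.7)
The plan is to follow the same case analysis that was used in the proof of Corollary \ref{relation c122} for $\k0=n-1$. We split on whether $N_0 = 0$ and whether $\cc_1^n[\M]=0$, and in each case reduce the identity to an already computed expression from Proposition \ref{k0=n-2}.

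\textbf{Case (a): $N_0\neq 0$ and $\cc_1^n[\M]\neq 0$.} This is the generic situation, covered by \eqref{c1n b} and \eqref{c1c2 b}. The idea is simply to substitute these formulas and watch the parameter $b$ cancel:
\begin{align*}
\cc_1^{n-2}\cc_2[\M]-\frac{n-3}{2(n-2)}\cc_1^n[\M]
 &= N_0(n-2)^{n-2}\bigl[24+(b+1)(n-2)(n-3)\bigr] \\
 &\quad - \frac{n-3}{2(n-2)}\cdot 2N_0(b+1)(n-2)^n \\
 &= 24\,N_0(n-2)^{n-2},
\end{align*}
since the two terms involving $(b+1)(n-3)(n-2)^{n-1}$ cancel identically.

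\textbf{Case (b): $N_0\neq 0$ and $\cc_1^n[\M]= 0$.} This is the boundary case $b=-1$. By \eqref{c1c2 n-2} we have $\cc_1^{n-2}\cc_2[\M]=24N_0(n-2)^{n-2}$, and the left-hand side reduces to this value because $\cc_1^n[\M]=0$.

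\textbf{Case (c): $N_0=0$.} If in addition $\cc_1^n[\M]=0$, then \eqref{H n-2 3} shows $\Hil\equiv 0$, so by \eqref{ah} the coefficient $a_{n-2}=\frac{1}{12(n-2)^{n-2}(n-2)!}(\cc_1^n+\cc_1^{n-2}\cc_2)[\M]$ vanishes, giving $\cc_1^{n-2}\cc_2[\M]=0$ and both sides equal $0$. Otherwise $\cc_1^n[\M]\neq 0$ and $\gamma=\frac{1}{(n-2)^n n!}\cc_1^n[\M]\neq 0$. Here the main (still routine) computation is to equate the coefficient of $z^{n-2}$ in the explicit formula \eqref{H n-2 3} with the expression given by \eqref{ah}. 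The coefficient of $z^{n-2}$ in $\bigl(z+\tfrac{n-2}{2}\bigr)\prod_{j=0}^{n-2}(z+j)$ equals
\[
\sigma_2\!\bigl(1,2,\ldots,n-2,\tfrac{n-2}{2}\bigr)
 = \sigma_2(1,\ldots,n-2)+\tfrac{n-2}{2}\sigma_1(1,\ldots,n-2),
\]
which by \eqref{sigma1}, \eqref{sigma2} (with $n$ replaced by $n-2$) equals $\tfrac{(n-2)(n-1)n(3n-7)}{24}$. Equating $\gamma$ times this with $a_{n-2}$ from \eqref{ah} and simplifying yields $\cc_1^{n-2}\cc_2[\M]=\tfrac{n-3}{2(n-2)}\cc_1^n[\M]$, so again both sides of the claimed identity equal zero.

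The main obstacle, as in Corollary \ref{relation c122}, is the bookkeeping in Case (c) with $\cc_1^n[\M]\neq 0$: one must carefully compute the elementary symmetric sum $\sigma_2$ over the shifted multiset of roots, and then track all factorials and powers of $n-2$ when matching with the definition of $a_{n-2}$. Everything else is a direct substitution from formulas already established in Proposition \ref{k0=n-2}.
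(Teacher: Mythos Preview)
Your proof is correct and follows essentially the same approach as the paper, which simply states that the argument is ``very similar to that of Corollary \ref{relation c122}'' and leaves the details to the reader. Your case analysis and the computation of $\sigma_2\bigl(1,\ldots,n-2,\tfrac{n-2}{2}\bigr)$ in Case (c) are exactly the details one would fill in by mimicking that earlier proof.
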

\begin{proof}
The proof of this Corollary is very similar to that of Corollary \ref{relation c122}, and the details are left to the reader.
\end{proof}

As a consequence of the analysis of $\Hil(z)$ when the index $\k0$ is $n-2$ or $n$ we have the following
\begin{corollary}\label{cor even chern classes}
Let $\ac$ be an $S^1$-space with $N_0\neq 0$. Assume the index satisfies either $\k0=n$, or $\k0=n-2$ and $n\geq 3$.
Then the Chern numbers $\cc_1^n[\M]$ and $\cc_1^{n-2}\cc_2[\M]$ are always \emph{even}.
\end{corollary}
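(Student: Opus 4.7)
The plan is to invoke the explicit formulas already derived in Propositions \ref{cor n} and \ref{k0=n-2}, and then read off the parities directly. Under the hypothesis $N_0 \neq 0$, both cases give a closed form for the two Chern numbers of interest, so the proof reduces to checking that certain polynomial expressions in $n$ (and an auxiliary integer-valued parameter) are even.

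For $\k0 = n$, Proposition \ref{cor n} gives $\cc_1^n[\M] = 2N_0 n^n$, which carries an explicit factor of $2$, and
\[
\cc_1^{n-2}\cc_2[\M] = N_0\, n^{n-2}(n^2 - n + 2) = N_0\, n^{n-2}\bigl(n(n-1) + 2\bigr).
\]
Since $n(n-1)$ is a product of two consecutive integers and hence even, the bracketed factor $n(n-1)+2$ is even, so both Chern numbers are even.

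For $\k0 = n-2$ with $n \geq 3$, I would use the uniform formulas \eqref{c1n b} and \eqref{c1c2 b} from Proposition \ref{k0=n-2}, which cover cases (a) and (b) simultaneously via the parameter $b$ (with $bN_0 \in \Z$). Then
\[
\cc_1^n[\M] = 2 N_0(b+1)(n-2)^n
\]
has an explicit factor of $2$, and, since $N_0(b+1) = N_0 b + N_0 \in \Z$, it is an even integer. For the second Chern number,
\[
\cc_1^{n-2}\cc_2[\M] = N_0(n-2)^{n-2}\bigl[24 + (b+1)(n-2)(n-3)\bigr].
\]
The summand $24 N_0(n-2)^{n-2}$ is obviously even, and $(n-2)(n-3)$ is a product of two consecutive integers, hence even; therefore $N_0(b+1)(n-2)^{n-1}(n-3)$ is an even integer (again using $N_0(b+1) \in \Z$), and the total expression is even.

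There is essentially no obstacle here: the argument is an elementary parity check on the explicit formulas, and the only subtlety is the implicit use of the integrality statement $bN_0 \in \Z$ provided by Proposition \ref{k0=n-2}, which guarantees that the manipulations stay within $\Z$. The underlying reason the parity works out precisely in the cases $\k0 \in \{n-2, n\}$, but not in the neighbouring cases $\k0 \in \{n-1, n+1\}$, is the additional root of $\Hil(z)$ at $z = -\k0/2$ forced by Corollary \ref{extra root -k02} when $n \equiv \k0 \pmod 2$, which is exactly what produces the even factor in the formulas above.
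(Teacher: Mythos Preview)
Your proof is correct and follows exactly the same approach as the paper: cite the explicit formulas \eqref{c1 n}, \eqref{c1c22}, \eqref{c1n b}, \eqref{c1c2 b} and read off the parity. The paper's proof is a two-line citation of those four equations; you have simply spelled out the elementary parity checks (consecutive integers, the factor $2$, and $bN_0\in\Z$) that the paper leaves implicit, and your closing remark about the extra root at $-\k0/2$ correctly identifies the structural reason the parity works for $\k0\in\{n,n-2\}$ and not for $\k0\in\{n+1,n-1\}$.
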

\begin{proof}
When $\k0=n$ the claim follows from Proposition \ref{cor n} \eqref{c1 n} and \eqref{c1c22},
and when $\k0=n-2$ it follows from Proposition \ref{k0=n-2} \eqref{c1n b} and \eqref{c1c2 b}.
\end{proof}

The case in which $\k0={\bf n-3}$, where $n\geq 4$, is not analysed in details here. However we would like to make some remarks about it when $N_0\neq 0$
and $\deg(\Hil)=n$, i.e.\;$ \cc_1^n[\M]\neq 0$.
First of all, observe that this is the first case in which the roots of $\Hil(z)$ may not belong to $\mathcal{C}_{\k0}$ (see Corollary \ref{position roots}). From Theorem \ref{main theorem} \eqref{H=0 even}, the roots of $\Hil(z)$ are $-1,-2,\ldots,-n+4$ (if $n>4$), plus four additional roots $z_1,z_2,z_3,z_4$. If the remaining four roots don't belong to $\mathcal{C}_{\k0}$, 
from the properties of $\Hil(z)$ they must be of the form $-\frac{n-3}{2}\pm a\pm {\bf i}\, b$, for some $a,b\in \R\setminus \{0\}$, thus obtaining that
\begin{equation}\label{k0=n-3}
\Hil(z)=\alpha \prod \Big(z+\frac{n-3}{2}\pm a\pm {\bf i}\,b\Big)\prod_{j=1}^{n-4}(z+j)\,.
\end{equation}
From the expression of $a_n$ in \eqref{ah} and Proposition \ref{properties P} \eqref{1a} it follows that 
\begin{equation}\label{cassini}
\Big[\Big(\frac{n-3}{2}-a\Big)^2+b^2\Big]\Big[\Big(\frac{n-3}{2}+a\Big)^2+b^2\Big]=\frac{N_0\,n!\,(n-3)^n}{\,(n-4)!\,\cc_1^n[\M]}\,,
\end{equation}
which implies that $\cc_1^n[\M]>0$. Moreover, for a fixed value of $\cc_1^n[\M]$, the four roots $z_1,\ldots,z_4$ 
belong to the \emph{Cassini oval}\footnote{We recall that a \emph{Cassini oval} is a quartic plane curve given by the locus of points in $\R^2\simeq \C$ satisfying the equation $$\mathrm{d}(p,q_1)\,\mathrm{d}(p,q_2)=d^2\,,$$ where $d\neq 0$. The points $q_1$ and $q_2$ are called the \emph{foci} of the Cassini oval.} of equation
\begin{equation}\label{cassini eq}
\mathrm{d}(p,0)\,\mathrm{d}(p,-n+3)=\sqrt{\frac{N_0\,n!\,(n-3)^n}{\,(n-4)!\,\cc_1^n[\M]}}
\end{equation}
where $\mathrm{d}(p,q)$ denotes the Euclidean distance from $p$ to $q$, with $p,q\in\R^2\simeq \C$, and the foci of this oval are the points $0$ and $-n+3$ (see Figure \ref{Fig:cassini}).


\begin{figure}[h!]
\begin{center}
\epsfxsize=\textwidth
\leavevmode
\includegraphics[width=3.5in]{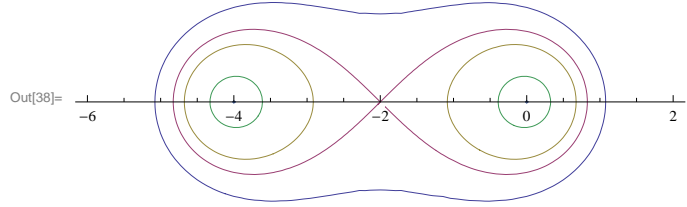}
\end{center}
\caption{Examples of \emph{Cassini ovals} of equation $\mathrm{d}(p,q_1)\,\mathrm{d}(p,q_2)=d^2$ with foci $q_1=(0,0)$ and $q_2=(-4,0)$ for different values of $d$.
The curve passing through the origin is called the \emph{lemniscate of Bernoulli}, and is obtained for $d=4$.}
\label{Fig:cassini}
\end{figure}


\medskip
\subsection{Conclusions on Hamiltonian and non-Hamiltonian actions.}
As an application of the results obtained before, we conclude the section with the proof of Theorem \ref{nHam-char}. Observe that for $n=1$ and $n=2$ there do not exist symplectic non-Hamiltonian circle actions with nonempty discrete fixed point sets: for $n=1$ the only compact surface admitting such a symplectic circle action is a sphere, hence the action is Hamiltonian; for $n=2$ the assertion was proved by McDuff in \cite[Proposition 2]{MD1}.
\begin{proof}[Proof of Theorem \ref{nHam-char}]
We recall that in the symplectic case $N_0$ can be either $0$ or $1$, and it is $0$ exactly if the action is non-Hamiltonian (see
Lemma \ref{N0 1}). 
Then the claims in (I) follow from Corollary \ref{bound on k0 s} ({\bf i'}), those
in (II) and (III) from Propositions \ref{cor n+1} and \ref{cor n}, and those in (IV) and (V) from Corollaries \ref{relation c122} and \ref{relation c122 2}.
\end{proof}
\begin{rmk}\label{other comb}
Observe that 
by Propositions \ref{cor n+1} and \ref{cor n}, when $\k0=n+1$ or $\k0=n$ the action is Hamiltonian if and only if \emph{all} the combinations of 
Chern numbers $\cc_1^h\,T_{n-h}[\M]$ do not vanish, for $h=0,\ldots,n$.
\end{rmk}
\section{Examples: low dimensions of $(\M,\J)$}\label{examples}

In this section, we study some consequences of the results previously obtained for
$n\leq 4$.
In particular we prove that when $\k0=n$ or $n+1$ then \emph{all the Chern numbers of $(\M,\J,S^1)$ can be expressed as a linear combination of the $N_j$'s}, where $N_j$ denotes the number of fixed points with exactly $j$ negative weights. In the Hamiltonian category, this amounts to saying that \emph{all the Chern numbers of $(\M,\omega, S^1)$ can be expressed as linear combinations of the Betti numbers of }$\M$ (see \eqref{bi=Ni}).

The most obvious Chern number that can always be written in terms of the $N_j$'s 
is  $\cc_n[\M]$. In fact, by definition of the $N_j$'s and $\cc_n[\M]=|\M^{S^1}|$, we have 
\begin{equation}\label{eq cn}
\cc_n[\M]=\sum_{j=0}^n N_j\,.
\end{equation}
In \cite{GoSa}, Godinho and the author proved that the Chern number $\cc_1\cc_{n-1}[\M]$ can also be expressed in terms of the $N_j$'s. 
We recall its explicit expression in the following
\begin{theorem}[\cite{GoSa} Theorem 1.2]\label{nostro}
 Let $(\M,\J, S^1)$ and $N_j$ be as above. Then 
 \begin{equation}\label{c1cn-1}
 \cc_1\cc_{n-1}[\M]=\sum_{j=0}^n N_j \Big[6j(j-1)+\frac{5n-3n^2}{2}\Big]\,.
 \end{equation}

\end{theorem}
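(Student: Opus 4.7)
My plan is to compute the Hirzebruch $\chi_y$-genus of $\M$ in two different ways and equate the resulting polynomial identities in $y$; the coefficient of $\cc_1\cc_{n-1}[\M]$ will drop out of the second derivative at $y=-1$.

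First I would apply the Atiyah--Segal formula to each exterior power $\Lambda^p T^*\M$---which is canonically equivariant since $T\M$ is---and sum the contributions weighted by $y^p$, yielding
\[
\chi_y^{S^1}(\M)\;=\;\sum_{i=0}^{N}\prod_{j=1}^{n}\frac{1+y\,t^{-w_{i,j}}}{1-t^{-w_{i,j}}}.
\]
By the classical rigidity of the $\chi_y$-genus (provable in the same spirit as the $y=0$ case treated in Corollary~\ref{todd genus comp}, via the Laurent-polynomial/pole analysis of Theorem~\ref{trick} applied here to the linear-in-$y$ integrand), this expression is in fact independent of $t$. Letting $t\to 0$, each factor with $w_{i,j}>0$ tends to $-y$ and each with $w_{i,j}<0$ tends to $1$, so the contribution of $p_i$ is $(-y)^{n-\lambda_i}$. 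Summing and invoking $N_j=N_{n-j}$ from \eqref{NiN} gives Hattori's formula
\[
\chi_y(\M)\;=\;\sum_{j=0}^{n}N_j(-y)^j,
\]
whose second derivative at $y=-1$ equals $\sum_j j(j-1)N_j$.

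Next I would expand $\chi_y(\M)$ via Hirzebruch--Riemann--Roch in Chern roots $x_1,\ldots,x_n$ of $T\M$:
\[
\chi_y(\M)\;=\;\prod_{j=1}^{n}\frac{x_j(1+ye^{-x_j})}{1-e^{-x_j}}\,[\M].
\]
Because the integrand is linear in $y$, differentiating twice and evaluating at $y=-1$ gives the sum over ordered pairs $j\neq k$ of $(\prod_{l\neq j,k}x_l)/[(e^{x_j}-1)(e^{x_k}-1)]$ evaluated on $[\M]$. Using the Bernoulli expansion $(e^x-1)^{-1}=x^{-1}-\tfrac12+\tfrac{x}{12}+O(x^3)$ and isolating the part of total degree $2n$, the surviving monomials involve only the elementary identity $\sum_j x_j^2\,\sigma_{n-2}(\ldots,\widehat{x_j},\ldots)=\cc_1\cc_{n-1}-n\cc_n$ (which follows from $x_j\,\sigma_{n-2}(\ldots,\widehat{x_j},\ldots)=\cc_{n-1}-\cc_n/x_j$) together with the count $n(n-1)$; a short calculation recovers the Libgober--Wood identity
\[
\frac{d^2\chi_y(\M)}{dy^2}\bigg|_{y=-1}\;=\;\tfrac{1}{6}\,\cc_1\cc_{n-1}[\M]+\tfrac{n(3n-5)}{12}\,\cc_n[\M].
\]

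Finally, equating the two expressions for $\chi_y''(\M)|_{y=-1}$, substituting $\cc_n[\M]=\sum_jN_j$ from \eqref{eq cn}, and solving for $\cc_1\cc_{n-1}[\M]$ yields \eqref{c1cn-1}. The main technical obstacle is the rigidity step in the first computation: once one knows that $\chi_y^{S^1}(\M)$ lies in $\Z[y]\subset \Z[y][t,t^{-1}]$, the remaining bookkeeping with Bernoulli coefficients and symmetric functions is routine. A direct way to establish rigidity within the framework of this paper is to decompose $f_y(x)=A(x)+yB(x)$ with $A(x)-B(x)=x$ and apply Theorem~\ref{trick} to the summands arising from the resulting expansion of $\prod_j f_y(x_j)$, checking that both the $t\to 0$ and $t\to \infty$ limits yield the same value at each power of $y$.
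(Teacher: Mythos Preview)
Your argument is correct: the two computations of $\chi_y''(\M)|_{y=-1}$ do combine to give \eqref{c1cn-1}, and your verification of the Libgober--Wood identity via the Bernoulli expansion and the symmetric-function identity $\sum_k x_k^2\,\sigma_{n-2}(\hat{k})=\cc_1\cc_{n-1}-n\,\cc_n$ is clean. Note, however, that the present paper does \emph{not} prove this theorem at all: it is quoted from \cite{GoSa} (Theorem 1.2 there) and used as a black box. Your proposal is essentially a reconstruction of the argument in \cite{GoSa}, which proceeds in exactly this way---localizing the rigid $\chi_y$-genus to obtain $\chi_y(\M)=\sum_j N_j(-y)^j$, and comparing with the cohomological expression for $\chi_y$ to extract $\cc_1\cc_{n-1}[\M]$ from the second derivative at $y=-1$.

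One small comment on your rigidity sketch at the end: the decomposition ``$f_y(x)=A(x)+yB(x)$ with $A(x)-B(x)=x$'' does not quite parse as written (for the localized factor $\frac{1+yt^{-w}}{1-t^{-w}}$ one gets $A-B=1$, not $x$). The cleanest route, entirely within the framework of Theorem~\ref{trick}, is simply to observe that each factor $\frac{1+yt^{-w_{i,j}}}{1-t^{-w_{i,j}}}$ has a finite limit both as $t\to 0$ (namely $-y$ if $w_{i,j}>0$ and $1$ if $w_{i,j}<0$) and as $t\to\infty$ (namely $1$ if $w_{i,j}>0$ and $-y$ if $w_{i,j}<0$); hence the Laurent polynomial $\chi_y^{S^1}(\M)\in\Z[y][t,t^{-1}]$ lies in $\Z[y][t]\cap\Z[y][t^{-1}]=\Z[y]$. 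This is exactly the $t\to 0$/$t\to\infty$ argument of Theorem~\ref{trick}, and it already yields the value $\sum_j N_j(-y)^{n-j}=\sum_j N_j(-y)^j$ without further work.
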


Suppose that $\ac$ is an $S^1$-space
of (real) dimension $2$. As also observed before, since we are requiring isolated fixed points,
such a space must be a $2$-sphere, obtaining 
 $\k0=2$, $\Hil(z)=1+z$ and $\cc_1[S^2]=2$. 

\subsection{$\mathbf{\dim(\M)=4}$} First of all, observe that by \eqref{NiN} and \eqref{eq cn} we have
\begin{equation}\label{c2 dim 4}
 \cc_2[\M]=2N_0+N_1\,.
\end{equation}
Moreover, 
by \eqref{NiN} and Theorem \ref{nostro} \eqref{c1cn-1}, for $n=2$ it follows that
\begin{equation}\label{c1 n=2}
\cc_1^2[\M]=10 N_0 - N_1\,.
\end{equation}
Thus in dimension $4$ all the Chern numbers can be expressed as a linear combination of the $N_j$'s (independently on $\k0$).
\begin{rmk}\label{pos 1}
Observe that the necessary condition $\cc_1^2+\cc_2[\M]\equiv 0 \mod{12}$, which must hold for
any compact almost complex manifold, for $S^1$-spaces becomes 
$\cc_1^2+\cc_2[\M]=12 N_0$ (it is equivalent to saying that the Todd genus is $N_0$). Hence, 
for $\ac$, the combination of Chern numbers $\cc_1^2+\cc_2[\M]$ must be a \emph{non-negative} multiple of $12$.
\end{rmk}
The following corollary is an easy consequence of the results obtained before, applied to the symplectic category:
\begin{corollary}\label{geo s}
Let $(\M,\omega)$ be a compact, connected symplectic manifold of dimension $4$ that can be endowed with a symplectic circle action with isolated fixed points.
Then 
\begin{equation}\label{c1 n2 h}
(\cc_1^2[\M], \cc_2[\M])=(10-b_2(\M), 2+ b_2(\M))\,.
\end{equation}
Moreover, any pair of integers $(p,q)$ satisfying $p+q=12$ and $p\leq 9$ can be realized as the pair of Chern numbers $(\cc_1^2[\M], \cc_2[\M])$
of a compact, connected symplectic manifold $\M$ of dimension $4$ supporting a symplectic circle action with isolated fixed points.
\end{corollary}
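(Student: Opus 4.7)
The plan is to reduce everything to the Hamiltonian situation and then apply the two identities \eqref{c2 dim 4} and \eqref{c1 n=2}. By a result of McDuff \cite{MD1} quoted at the beginning of the proof of Theorem \ref{nHam-char}, every symplectic circle action with isolated fixed points on a compact connected symplectic $4$-manifold is Hamiltonian. Therefore Lemma \ref{N0 1} forces $N_0=1$, and the equivariant perfection of the moment map gives $b_{2j}(\M)=N_j$ for $j=0,1,2$ (see \eqref{bi=Ni}). In particular $N_1=b_2(\M)$. Substituting $N_0=1$ and $N_1=b_2(\M)$ into \eqref{c2 dim 4} and \eqref{c1 n=2} yields at once
\[
\cc_1^2[\M]=10-b_2(\M), \qquad \cc_2[\M]=2+b_2(\M),
\]
which is the asserted formula \eqref{c1 n2 h}.

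For the realization part, observe that a compact connected symplectic manifold always satisfies $b_2(\M)\geq 1$, since the cohomology class of the symplectic form is non-trivial; combined with the formula $\cc_1^2[\M]=10-b_2(\M)$, this forces $p\leq 9$, explaining the constraint in the statement. Conversely, given a positive integer $k\geq 1$, the plan is to produce a compact connected symplectic $4$-manifold $\M_k$ with a Hamiltonian $S^1$-action with isolated fixed points and $b_2(\M_k)=k$; the formula above then realizes the pair $(p,q)=(10-k,2+k)$ for every $k\geq 1$, which sweeps out exactly the admissible pairs on the line $p+q=12$ with $p\leq 9$.

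The natural candidates are $\M_1=\C P^2$ (which is toric and satisfies $b_2=1$, giving the pair $(9,3)$), and for $k\geq 2$ the equivariant symplectic blow-up of $\C P^2$ at $k-1$ fixed points of a suitable circle action inside its toric $\mathbb{T}^2$-action. Each such blow-up is a smooth compact symplectic $4$-manifold; choosing the circle subgroup of $\mathbb{T}^2$ to have pairwise distinct weights at all fixed points ensures that the $S^1$-action has only isolated fixed points, and each blow-up increases $b_2$ by $1$ while preserving the Hamiltonian nature of the action.

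The only non-routine point in the argument is invoking McDuff's theorem in the first step; once this is granted, everything reduces to substitution into \eqref{c2 dim 4} and \eqref{c1 n=2} and the construction of toric blow-ups. Thus I expect no substantive obstacle beyond citing \cite{MD1} correctly and verifying that the chosen circle subgroup acts with isolated fixed points on the blown-up manifolds.
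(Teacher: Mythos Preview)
Your proposal is correct and follows essentially the same approach as the paper: invoke McDuff's theorem \cite{MD1} to conclude the action is Hamiltonian, use Lemma \ref{N0 1} and \eqref{bi=Ni} to identify $N_0=1$ and $N_1=b_2(\M)$, substitute into \eqref{c2 dim 4} and \eqref{c1 n=2}, and realize all admissible pairs via $\C P^2$ and its successive $S^1$-equivariant blow-ups. Your write-up is slightly more detailed (e.g.\ explaining why $b_2\geq 1$ and noting the genericity needed on the circle subgroup), but the argument is the same.
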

\begin{proof}
Given $(\M,\omega,S^1)$ of dimension $4$, a theorem of McDuff \cite{MD1} implies that the action is Hamiltonian, and as a consequence of \eqref{c1 n=2}, Lemma \ref{N0 1}
and \eqref{bi=Ni} we obtain \eqref{c1 n2 h}.
The second assertion follows from observing that $b_2(\M)$ is positive, it is at least one (consider $(\C P^2,\omega_{F},S^1)$, see Example \ref{ch}), and can be arbitrarily large: to obtain
$(\M,\omega,S^1)$ with $b_2(\M)=k$, it is sufficient to perform $(k-1)$ times an $S^1$-equivariant blow-up on $(\C P^2,\omega_{F},S^1)$.
\end{proof}
\begin{exm}\label{ch}[{\bf The complex projective space and Hirzebruch surfaces}]
Consider $\C P^2$ endowed with a multiple of the Fubini-Study form $\omega_F$, and `standard' $S^1$-action, namely $S^1$ is a circle subgroup of a 2-dimensional torus $\T^2$ acting on $\C P^2$
in a toric way. Thus the $S^1$-action is given by
$\alpha \cdot [z_0:z_1:z_2]=[z_0:\alpha^l z_1:\alpha^{l+m}z_2]$
for every  $\alpha\in S^1$ (where $l$ and $m$ are non-zero, coprime integers) it has three fixed points and is Hamiltonian. Note that the minimal Chern number of $\C P^2$  is $3$.
We denote this $S^1$-space by $(\C P^2,\lambda \,\omega_F,S^1)_{l,m}$, where $\lambda \in \R_{>0}$.

For every $k\in \Z$, let $\mathcal{H}_k$ be the Hirzebruch surface: $\{([z_0:z_1:z_2],[w_1:w_2])\in \C P^2 \times \C P^1\mid z_1\,w_2^k=z_2\,w_1^k\}$, endowed
with symplectic form $\widetilde{\omega}$ induced by multiples of the Fubini-Study forms on $\C P^2$ and $\C P^1$.  We can give each $\mathcal{H}_k$ an $S^1$-action, defined by:
 $\alpha\cdot ([z_0:z_1:z_2],[w_1:w_2])=([\alpha^l z_0:z_1:\alpha^{k\,m}z_2)],[w_1,\alpha^m w_2])$, where $l$ and $m$ are non-zero, coprime integers. This action has $4$ fixed points and is Hamiltonian.
 We denote these $S^1$-spaces by $(\mathcal{H}_k,\widetilde{\omega},S^1)_{l,m}$.
 Note that the minimal Chern number of $\mathcal{H}_k$ is $1$ if $k$ is odd and $2$ if $k$ is even, and $\mathcal{H}_k$ is respectively called an \emph{odd} or \emph{even} Hirzebruch surface.
 
\end{exm}
\begin{rmk}\label{minimal spaces}
The examples above are exactly the \emph{minimal spaces} obtained in the classification of $(\M,\omega,S^1)$ of dimension $4$ (if the fixed point set is not discrete, there is an additional class of minimal spaces
given by $\C P^1$-bundles over Riemann surfaces of genus $g\geq 1$), see \cite{AH,Au} and \cite{K}. More precisely, in \cite{K} Karshon proves that every $(\M,\omega,S^1)$ is equivariantly symplectomorphic to a symplectic
$S^1$-space obtained from $(\C P^2,\lambda \,\omega_F,S^1)_{l,m}$ or
 $(\mathcal{H}_k,\widetilde{\omega},S^1)_{l,m}$ (for suitable $\lambda, l,m,k$ as above) by a sequence of $S^1$-equivariant blow-ups at fixed points.\footnote{Note that the blow-up of $(\C P^2,\lambda \,\omega_F,S^1)_{l,m}$
 at one fixed point is an odd Hirzebruch surface.} 
 \end{rmk}
\begin{rmk}\label{ci 2}
Observe that for every $S^1$-space $(\M,\omega,S^1)$ of dimension $4$ the following inequality holds:
\begin{equation}\label{ci 12}
\cc_1^2[\M]\leq 3\cc_2[\M]\,.
\end{equation}
Indeed, Corollary \ref{geo s} implies that \eqref{ci 12} is equivalent to $b_2(\M)\geq 1$.
Note that \eqref{ci 12} was conjectured by Van de Ven \cite{V}
and proved by Miyaoka \cite{Mi}
for (complex) surfaces of general type. 

The following question is then natural:
\begin{question}\label{inac?}
Let $\ac$ be an $S^1$-space. Does inequality \eqref{ci 12} hold? 
\end{question}
\end{rmk}
By \eqref{c2 dim 4} and \eqref{c1 n=2}, proving inequality \eqref{ci 12} for $\ac$ is equivalent to proving that for every such space $N_0\leq N_1$. 
The next proposition 
implies that the answer to question \ref{inac?} is `yes' for all $4$-dimensional $S^1$-spaces whose index is not one.

\begin{prop}\label{dim 4}
Let $(\M,\J,S^1)$ be an $S^1$-space of dimension $4$, and let
$\k0$, $\Hil(z)$ and the $N_j$'s be defined as before.
Then $N_0, N_1$ and $N_2$ are all non-zero, the first Chern class $\cc_1$ is not a torsion element in $H^2(\M;\Z)$, and $\k0\in \{1,2,3\}$. Moreover 
\begin{itemize}
\item[(a)]If $\k0=3$ then
\begin{equation}\label{dim4 1}
N_0=N_1=N_2,\;\;\;\quad \cc_1^2[\M]=9N_0\;\;\;\quad \mbox{and}\;\;\; \quad \Hil(z)=\frac{N_0}{2}(z+1)(z+2).
\end{equation}
\item[(b)] If $\k0=2$ then
\begin{equation}\label{dim4 2}
2N_0=N_1=2N_2,\;\;\;\quad \cc_1^2[\M]=8N_0\;\;\;\quad \mbox{and}\;\;\; \quad \Hil(z)=N_0(z+1)^2. 
\end{equation}
\end{itemize}
$\;$\\
Given $(\M,\omega,S^1)$ of dimension $4$ we have that
\begin{itemize}
 \item[(a')] $\k0=3$ if and only if there exists $\lambda>0$ and coprime integers $l,m$ such that $(\M,\omega,S^1)$ is equivariantly symplectomorphic to $(\C P^2,\lambda\,\omega_F,S^1)_{l,m}$.
\item[(b')] $\k0=2$ if and only if there exists coprime integers $l,m$, an even $k\in \Z$ and a symplectic form $\widetilde{\omega}$ on $\mathcal{H}_k$ such that $(\M,\omega,S^1)$ is equivariantly symplectomorphic to $(\mathcal{H}_k,\widetilde{\omega},S^1)_{l,m}$.
\end{itemize}
\end{prop}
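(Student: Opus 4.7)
The plan proceeds in three stages. \textbf{First}, to show $N_0,N_1,N_2>0$, I would apply the ABBV localization formula (Theorem~\ref{abbv formula}) to the class $1\in H^0_{S^1}(\M;\Q)$. Since $1[\M]$ vanishes on a $4$-manifold, the formula yields
\[
0 \;=\; \sum_{p\in \M^{S^1}}\frac{1}{w_{p,1}\,w_{p,2}}.
\]
The sign of the $p$-th summand is positive precisely when $\lambda_p\in\{0,2\}$ and negative when $\lambda_p=1$. If $N_1=0$ every summand would be positive, while if $N_0=N_2=0$ (recall $N_0=N_2$ by \eqref{NiN}) every summand would be negative; either alternative contradicts the vanishing of the sum. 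Hence all three of $N_0,N_1,N_2$ are strictly positive. Positivity of $N_0$ combined with Lemma~\ref{c1 N0}\,(a2) forces $\cc_1$ to be non-torsion, and Corollary~\ref{bound on k0}\,({\bf i}) then gives $1\le \k0\le n+1=3$.

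\textbf{Second}, cases~(a) and~(b) follow by specialising the results of Section~\ref{sec: values k0} to $n=2$. Proposition~\ref{cor n+1} immediately gives $\Hil(z)=\tfrac{N_0}{2}(z+1)(z+2)$ and $\cc_1^2[\M]=9N_0$ when $\k0=3$; plugging into \eqref{c1 n=2} forces $N_1=N_0$, and combined with $N_0=N_2$ this produces $N_0=N_1=N_2$. Analogously, Proposition~\ref{cor n} yields $\Hil(z)=N_0(z+1)^2$ and $\cc_1^2[\M]=8N_0$ when $\k0=2$, and \eqref{c1 n=2} then gives $N_1=2N_0$, whence $2N_0=N_1=2N_2$.

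\textbf{Third}, for (a$'$) and (b$'$) I would invoke the fact that in dimension~$4$ every symplectic circle action with isolated fixed points is Hamiltonian (McDuff \cite{MD1}), so $N_0=1$ by Lemma~\ref{N0 1} and Karshon's classification (Remark~\ref{minimal spaces}) applies. In case (a$'$), part~(a) yields $|\M^{S^1}|=3N_0=3$, so $\M$ must reduce to the unique minimal model with three fixed points, namely some $(\C P^2,\lambda\,\omega_F,S^1)_{l,m}$; conversely, $\cc_1(\C P^2)=3[\omega_F]$ with $[\omega_F]$ a generator of $H^2(\C P^2;\Z)$, so $\k0=3$. In case (b$'$), part~(b) yields $|\M^{S^1}|=4N_0=4$, and Karshon then identifies $\M$ with some $\mathcal{H}_k$ (note that the blow-up of $\C P^2$ at one fixed point is itself $\mathcal{H}_1$). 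A direct computation of the anticanonical class gives $\cc_1(\mathcal{H}_k)=2C_0+(k+2)F$ in the standard $\Z$-basis $\{C_0,F\}$ of $H^2(\mathcal{H}_k;\Z)$, so the index equals $\gcd(2,k+2)$, which is $2$ precisely when $k$ is even.

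The main obstacle is the final index computation for Karshon's minimal models: one needs to know $\cc_1(\mathcal{H}_k)$ explicitly in a primitive basis of $H^2$ in order to extract the gcd, and one needs to verify that every further equivariant blow-up adjoins an exceptional class $E$ rendering $\pi^*\cc_1-E$ primitive (so that $\k0=1$ on any blow-up). The latter is in fact only needed implicitly, since parts~(a),(b) already pin down $|\M^{S^1}|$ to $3$ or $4$ and thus single out the minimal model directly.
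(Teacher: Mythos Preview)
Your proof is correct and follows essentially the same route as the paper: the ABBV sign argument for the positivity of the $N_j$, Lemma~\ref{c1 N0}\,(a2) and Corollary~\ref{bound on k0}\,({\bf i}) for the bound on $\k0$, Propositions~\ref{cor n+1} and~\ref{cor n} together with \eqref{c1 n=2} for parts~(a) and~(b), and Karshon's classification for~(a$'$) and~(b$'$). The only cosmetic differences are that the paper phrases the identification of the minimal model in terms of $b_2(\M)$ via \eqref{bi=Ni} rather than $|\M^{S^1}|$, and simply quotes the minimal Chern numbers of $\mathcal{H}_k$ from Example~\ref{ch} instead of writing out $\cc_1(\mathcal{H}_k)=2C_0+(k+2)F$ and reading off $\gcd(2,k+2)$ as you do.
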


\begin{proof}
Let $p$ be a fixed point, and $e^{S^1}(p)\in H_{S^1}^2(\{p\};\Z)=\Z[x]$ the equivariant Euler class of the normal bundle at $p$, which is simply given by $w_{1p}w_{2p}x$, where $w_{1p}$ and $w_{2p}$ are the weights of the isotropy $S^1$-action at $p$. By the ABBV formula (Thm.\ \ref{abbv formula}) we must have
\begin{equation}\label{ABBV}
\sum_{p\in \M^{S^1}}\frac{1}{e^{S^1}(p)}=1[\M]=0\,.
\end{equation} 
So it follows that $\M^{S^1}$ must contain points whose product of the corresponding weights is positive, as well as those for which it is negative. Thus $N_0+N_2\neq 0$ which, together with \eqref{NiN}, implies that $N_0$ and $N_2$ are non-zero,
 and $N_1\neq 0$ . From Lemma \ref{c1 N0} (a2) it follows that $\cc_1$ is not a torsion element in $H^2(\M;\Z)$, and by Corollary \ref{bound on k0} ({\bf i}) that $\k0\in \{1,2,3\}$.

If $\k0=3$, by Proposition \ref{cor n+1} \eqref{c1 n+1} we have $\cc_1^2[\M]=9 N_0$ which, together with \eqref{c1 n=2} and \eqref{NiN},
implies $N_0=N_1=N_2$. The expression for the Hilbert polynomial follows immediately from Proposition \ref{cor n+1}.
The claims in (b) follow similarly by using Proposition \ref{cor n}.

Suppose that $(\M,\omega)$ and the $S^1$-action are symplectic. By Lemma \ref{N0 1} and the fact that $N_0\neq 0$ we have that the action is Hamiltonian and $N_0=1$ (this also reproves
McDuff's theorem \cite{MD1} in the case in which the fixed point set is discrete).
Observe that blowing-up at one fixed point increases the second Betti number $b_2$ by $1$. It follows that the two families of minimal spaces in Remark \ref{minimal spaces}
 are the only compact, connected symplectic manifolds of dimension $4$ that can be endowed with a symplectic circle action with isolated fixed points, with $b_2\leq 2$.
If $\k0=3$, from (a) and \eqref{bi=Ni} we have that $b_0(\M)=b_2(\M)=b_4(\M)=1$, and (a') follows from the classification in \cite{K}.
If $\k0=2$, from (b) we have that $b_0(\M)=b_4(\M)=1$ and $b_2(\M)=2$, and the claim in (b') follows as well from \cite{K}.

\end{proof}
\begin{rmk}\label{not n}
\begin{enumerate}
\item Since symplectic $S^1$-spaces of dimension $4$ are completely classified, the claims in (a') and (b') also follow from the classification in \cite{K}. However we would like to
point out that Proposition \ref{dim 4} (a) and (b) implies immediately that for $\k0=3$ the Betti numbers of $(\M,\omega,S^1)$ are exactly those of $\C P^2$, and for $\k0=2$ they are exactly those of a Hirzebruch surface.
\item The numbers $\lambda,l,m$ appearing in (a') are determined by the `Karshon graph' $\Gamma$ associated to 
$(\M,\omega,S^1)$, as described carefully in \cite{K}; a similar conclusion holds for the case in (b').
\end{enumerate}
\end{rmk}

If $\k0=1$, Proposition \ref{k0=n-1} implies that the Hilbert polynomial
depends on the value of $ \cc_1^2[\M]$. It is interesting to study the position of the roots of $\Hil(z)$ in terms of 
 $\beta=\frac{N_1}{N_0}$. Observe that, by Proposition \ref{dim 4}, $\beta>0$ and if the action is Hamiltonian (and the manifold is connected) then $\beta=b_2(\M)$.
From the definition of Hilbert polynomial of $(\M,\J)$ and \eqref{c1 n=2} (see Proposition \ref{k0=n-1}), it is immediate to see that 
 \begin{equation}\label{Hil n=2 k0=1}
 \Hil(z)=\frac{N_0}{2}\big[(10-\beta)z^2+(10-\beta)z+2\big]\,.
 \end{equation}
 Thus for $\beta\neq 10$ the roots, which are of the form $-\frac{1}{2}\pm a$ with $a$ either real or pure imaginary, have the following position:
 \begin{itemize}
 \item for $0<\beta<2$ or $\beta>10$ they are real and distinct;
 \item for $\beta=2$ they are real and coincide;
 \item for $2<\beta<10$ they live on the axis $-\frac{1}{2}+iy$, for $y\in \R\setminus\{0\}$.
 \end{itemize}
 Moreover when $\left\| \cc_1^2[\M]\right\|\to +\infty$, or equivalently when $\beta\to +\infty$, the roots cluster around the ``foci" $0$ and $-1$.

Observe that by Proposition \ref{dim 4}, in the symplectic case it is impossible to have $\k0=1$ and $\beta=b_2(\M)\leq 2$. 
Moreover, we can have manifolds with $b_2(\M)$ arbitrarily large; it is sufficient to blow-up $\C P^2$ as many times
as we want.

\subsection{$\mathbf{\dim(\M)=6}$} 
Now suppose that $\dim(\M)=6$.  As a consequence of \eqref{NiN} and \eqref{eq cn} we have that
\begin{equation}\label{c3 6}
\cc_3[\M]=2(N_0+N_1)\,,
\end{equation}
and, as a direct consequence of Theorem \ref{nostro}, that
\begin{equation}\label{c1c2 6}
\cc_1\cc_2[\M]=24\,N_0\,.
\end{equation}
\begin{rmk}\label{pos 2}
In dimension $6$ the congruences that must be satisfied by the Chern numbers are
$\cc_1\cc_2[\M]\equiv 0 \mod{24}$, and $\cc_1^3[\M]\equiv \cc_3[\M]\equiv 0 \mod{2}$. Equations \eqref{c3 6} and \eqref{c1c2 6} show that 
for $S^1$-spaces $\cc_1\cc_2[\M]$ is always a \emph{non-negative} multiple of $24$, and $\cc_3[\M]$ a \emph{positive} multiple of $2$. However
our method does not give (in)equalities for $\cc_1^3[\M]$, unless $\k0=3,4$, see Proposition \ref{dim 6}.
\end{rmk}
The following proposition 
follows immediately from
Propositions \ref{cor n+1}, \ref{cor n} and Lemma \ref{N0 1}:
\begin{prop}[$\mathbf{\dim(\M)=6},\;\k0=3,4$]\label{dim 6}
Let $(\M,\J,S^1)$ be an $S^1$-space of dimension $6$, and let
$\k0$, $\Hil(z)$ and the $N_j$'s be defined as before. 
\begin{itemize}
\item[(a)]  If $\k0=4$ then 
$$
\cc_1^3[\M]=64 N_0\quad\;\;\;\mbox{and}\;\;\;\quad \Hil(z)=\frac{N_0}{6}(z+1)(z+2)(z+3).
$$
\item[(b)] If $\k0=3$ then
$$
\cc_1^3[\M]=54 N_0\quad\;\;\;\mbox{and}\;\;\;\quad \Hil(z)=\frac{N_0}{6}(2z+3)(z+1)(z+2).
$$
\end{itemize} 
If we are given $(\M,\omega,S^1)$ of dimension $6$ we have that: 
\begin{itemize}
\item[(i)] If the action is Hamiltonian, then $\k0=4$ implies $(\cc_1^3[\M],\cc_1\cc_2[\M])=(64,24)$, and 
$\k0=3$ implies $(\cc_1^3[\M],\cc_1\cc_2[\M])=(54,24)$.
\item[(ii)] If the action is non-Hamiltonian, then for all $\k0\geq 3$ we have $(\cc_1^3[\M],\cc_1\cc_2[\M])=(0,0)$.
\end{itemize}
\end{prop}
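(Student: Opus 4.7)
The plan is to specialise the general structural results of Section~\ref{sec: values k0} to $n = 3$ and then combine them with Lemma~\ref{N0 1} and equation~\eqref{c1c2 6} to obtain the symplectic consequences. Parts (a) and (b) are essentially reformulations of earlier propositions: for (a) I would apply Proposition~\ref{cor n+1} with $n = 3$ and $\k0 = n+1 = 4$, so that \eqref{c1 n+1} gives $\cc_1^3[\M] = N_0(n+1)^n = 64\,N_0$ and \eqref{H k0=n+1} gives $\Hil(z) = \frac{N_0}{n!}\prod_{j=1}^n(z+j) = \frac{N_0}{6}(z+1)(z+2)(z+3)$. For (b) I would apply Proposition~\ref{cor n} with $n = \k0 = 3$: equation \eqref{c1 n} yields $\cc_1^3[\M] = 2\,N_0\,n^n = 54\,N_0$, and \eqref{H k0=n} simplifies to $\Hil(z) = \tfrac{2N_0}{6}(z+\tfrac{3}{2})(z+1)(z+2) = \tfrac{N_0}{6}(2z+3)(z+1)(z+2)$.

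For the Hamiltonian statement in (i), I would invoke Lemma~\ref{N0 1}, which forces $N_0 = 1$. Plugging $N_0 = 1$ into (a) or (b) gives $\cc_1^3[\M] = 64$ or $54$ respectively, and equation~\eqref{c1c2 6} (the $n = 3$ case of Theorem~\ref{nostro}) then yields $\cc_1\cc_2[\M] = 24\,N_0 = 24$ in both subcases.

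For the non-Hamiltonian statement in (ii), Lemma~\ref{N0 1} gives $N_0 = 0$, whence \eqref{c1c2 6} immediately produces $\cc_1\cc_2[\M] = 0$. To handle $\cc_1^3[\M] = 0$ I would split on the index: if $\k0 \in \{3,4\}$, parts (a) and (b) with $N_0 = 0$ give $\cc_1^3[\M] = 0$ on the nose; if $\k0 \geq 5 = n+2$, Corollary~\ref{bound on k0 s}~({\bf ii'}) rules out its alternatives (a) (since $\k0 > 0$) and (c) (which requires $\k0 \leq n-1 = 2$), leaving only $\Hil \equiv 0$, whose leading coefficient $a_3 = \cc_1^3[\M]/(6\k0^3)$ then forces $\cc_1^3[\M] = 0$. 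No step in this plan presents any genuine obstacle, as the entire proposition is essentially a bookkeeping specialisation of results already established in Sections~\ref{equations chern}--\ref{sec: values k0} together with Theorem~\ref{nostro}.
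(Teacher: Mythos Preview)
Your proposal is correct and follows exactly the approach the paper indicates: the paper simply states that the proposition ``follows immediately from Propositions~\ref{cor n+1}, \ref{cor n} and Lemma~\ref{N0 1}'', and you have spelled out precisely how, together with the already-established formula \eqref{c1c2 6}. Your explicit handling of the case $\k0\geq 5$ in part~(ii) via Corollary~\ref{bound on k0 s}~({\bf ii'}) is a welcome clarification, since Propositions~\ref{cor n+1} and~\ref{cor n} by themselves only cover $\k0\in\{3,4\}$.
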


When $\k0<3$, the Chern number $ \cc_1^3[\M]$ and the Hilbert polynomial $\Hil(z)$ are not determined by the index, $N_0$ and $N_1$ (see Remark \ref{not same}).
For example, if $\k0=2$ then from Proposition \ref{k0=n-1} it follows that for $N_0\neq 0$ and $ \cc_1^3[\M]\neq 0$ we have 
\begin{equation}\label{k0=1,2}
\cc_1^3[\M]=\frac{48 N_0}{1-a}\quad\mbox{and}\quad \Hil(z)=\frac{N_0}{1-a}\big[z^2+2z+1-a\big](z+1)
\end{equation}
where $a\neq 1$. 
Thus the roots of $\Hil(z)/(z+1)$ are real exactly if $ \cc_1^3[\M]\geq 48\,N_0\;\;$ or  $\;\;\cc_1^3[\M]<0$.
Moreover they cluster around the ``foci" $0$ and $-2$ exactly if $\left\| \cc_1^3[\M]\right\|\to +\infty$.
\begin{exm}\label{examples 6} In the following we give examples of manifolds of dimension $6$ with $\k0=2$, together with their associated Hilbert polynomials.
\begin{itemize}
\item[(1)] \emph{The flag variety }$\mathcal{F}l(\C^3)=:\mathcal{F}$. The variety of complete flags in $\C^3$ is a compact symplectic (indeed K\"ahler) manifold of dimension $6$ which can be endowed with a Hamiltonian $S^1$-action with exactly $6$ fixed points; for details about the action see \cite[Example 5.5]{GoSa} and the discussion preceding it. The reader can verify that the definition of $\k0$ given here coincides with that of $C$ given in \cite{GoSa}, hence 
$\k0=2$. Moreover $\cc_1^3[\mathcal{F}]=48$, and the Hilbert polynomial is $\Hil_{\mathcal{F}}(z)=(z+1)^3$.
\item[(2)] \emph{The product of spheres $S^2\times S^2\times S^2=:\mathcal{S}$}. This is a compact symplectic (indeed K\"ahler) manifold which can be endowed with a Hamiltonian $S^1$-action with exactly $2^3=8$ fixed points. Moreover it can be checked that
$\cc_1^3[\mathcal{S}]=48$, and the Hilbert polynomial is $\Hil_{\mathcal{S}}(z)=(z+1)^3$.
\item[(3)] \emph{The Fano threefold $V_5$} (for details see \cite{M, T1} or \cite[Example 6.14]{GoSa}). This is a Fano manifold which can be endowed with a Hamiltonian $S^1$-action with exactly $4$ fixed points. The cohomology ring is given by $\Z[x,y]/\langle x^2-5y,y^2 \rangle$ (where $x$ has degree $2$, and $y$ degree $4$), $\k0=2$ and $\cc_1=2x$. Thus $\cc_1^3[V_5]=40$,
and the Hilbert polynomial is $\Hil_{V_5}(z)=\frac{1}{6}\big[5z^2+10z+6\big](z+1)$.
\item[(4)] \emph{A non-K\"ahler example} $n\mathcal{K}$. In \cite{T1}, Tolman constructs a $6$-dimensional compact symplectic manifold  which supports a Hamiltonian action of a $2$-dimensional torus $T$
with isolated fixed points, but does not admit any $T$-invariant K\"ahler structure. Moreover this action is GKM (see \cite{GKM}), and its index $\k0$, as well as the Chern number $\cc_1^3[n\mathcal{K}]$, can be computed from its GKM graph (see \cite{GT}, in particular Example 5.2 and Figure 1, as well as the discussion on page 27 in \cite{GoSa}). It can be checked that in this case $\cc_1=4 \tau_1 + 2 \tau_2$, where $\tau_i\in H^2(\M;\Z)$ is the image under $r_H$ of
the canonical class $\tau_i^{T}\in H_T^2(\M;\Z)$ introduced in \cite{GT}, for $i=1,2$. Since $H^2(\M;\Z)=\Z\langle \tau_1,\tau_2 \rangle$, we have $\k0=2$. Moreover $\cc_1^3[n\mathcal{K}]=64$ and the Hilbert polynomial is $\Hil_{n\mathcal{K}}(z)=\frac{1}{3}\big[4z^2+8z+3\big](z+1)$.
\end{itemize}
\begin{rmk}\label{not same}
Notice that the flag variety in (1) and the non-K\"ahler example in (4) have the same index, the same Betti numbers (hence the same $N_j$'s), but different value of $\cc_1^3[\M]$ and different Hilbert polynomial. 
\end{rmk}
\end{exm}
If $\k0=1$ then from Proposition \ref{k0=n-2} it follows that for $N_0\neq 0$ and $\cc_1^3[\M]\neq 0$ we have 
\begin{equation}\label{k0=1,2 2}
\cc_1^3[\M]=\frac{48 N_0}{1-4a}\quad\mbox{and}\quad \Hil(z)=\frac{N_0}{1-4a}\big[4z^2+4z+1-4a\big](2z+1)
\end{equation}
where $a\neq \frac{1}{4}$. 
Thus the roots of $\Hil(z)/(2z+1)$ are real exactly if $\cc_1^3[\M]\geq 48\,N_0\;\;$ or  $\;\;\cc_1^3[\M]<0$.
Moreover they cluster around the ``foci" $0$ and $-2$ exactly if $\left\|\cc_1^3[\M]\right\|\to +\infty$.

\begin{exm}\label{examples 6-1}
In the following we give examples of manifolds of dimension $6$ with $\k0=1$, together with their associated Hilbert polynomials.
\begin{itemize}
\item[(1)] $\C P^1\times \C P^2=:\mathcal{C}$. This is a compact symplectic (indeed K\"ahler) manifold which can be endowed with a Hamiltonian $S^1$-action with $6$ fixed points. Moreover $\cc_1^3[\mathcal{C}]=54$, and the Hilbert polynomial is $\Hil_{\mathcal{C}}(z)=\frac{1}{2}\big[9z^2+9z+2\big](2z+1)$.
\item[(2)] \emph{The Fano threefold $V_{22}$} (for details see \cite{M, T1} or \cite[Example 6.14]{GoSa}). Similarly to Example \ref{examples 6} (3), this is a Fano manifold which can be endowed with a Hamiltonian $S^1$-action with exactly $4$ fixed points. The cohomology ring is given by $\Z[x,y]/\langle x^2-22y,y^2 \rangle$ (where $x$ has degree $2$, and $y$ degree $4$), $\k0=1$ and $\cc_1=x$. Thus $\cc_1^3[V_{22}]=22$, and the Hilbert polynomial is $\Hil_{V_{22}}(z)=\frac{1}{6}\big[11z^2+11z+6\big](2z+1)$.
\end{itemize}
\end{exm}

\subsection{$\mathbf{\dim(\M)=8}$}
When $\dim(\M)=8$, from \eqref{NiN} and \eqref{eq cn} we have that
\begin{equation}\label{eq c4 8}
  \cc_4[\M]= 2\,N_0+2\,N_1+N_2\,,
\end{equation}
and from Theorem \ref{nostro}
\begin{equation}\label{c1c3 8}
 \cc_1\cc_3[\M]=44\,N_0+8\,N_1-2\,N_2\,.
\end{equation}
As for the remaining Chern numbers, we can use Propositions \ref{cor n+1} and \ref{cor n} to prove the following
\begin{prop}[$\mathbf{\dim(\M)=8},\;\k0=4,5$]\label{dim 8}
Let $(\M,\J,S^1)$ be an $S^1$-space of dimension $8$, and let
$\k0$, $\Hil(z)$ and the $N_j$'s be defined as before. 
\begin{itemize}
\item[(a)]  If $\k0=5$ then
\begin{equation}\label{k0=5 8}
 \cc_1^4[\M]=625\,N_0\,,\quad  \cc_1^2\cc_2[\M]=250\,N_0\,,\quad  \cc_2^2[\M]=101\,N_0-2\,N_1+N_2\,,
\end{equation}
and $\Hil(z)= \displaystyle\frac{N_0}{24}\prod_{j=1}^4(z+j)$\,.
\item[(b)] If $\k0=4$ then
\begin{equation}\label{k0=4 8}
 \cc_1^4[\M]=512\,N_0\,,\quad  \cc_1^2\cc_2[\M]=224\,N_0\,,\quad  \cc_2^2[\M]=98\,N_0-2\,N_1+N_2\,,
\end{equation}
and $\Hil(z)= \displaystyle\frac{N_0}{12}(z+2)\prod_{j=1}^3(z+j)$.
\end{itemize} 
Moreover, if $(\M,\omega)$ is a connected symplectic manifold and the $S^1$-action is Hamiltonian then
\begin{itemize}
\item[(a')] if $\k0=5$ we have
\begin{equation}\label{k0=5 8 1}
 \cc_1^4[\M]=625\,,\quad  \cc_1^2\cc_2[\M]=250\,,\quad  \cc_2^2[\M]=101-2\,b_2(\M)+b_4(\M);
\end{equation}
\item[(b')] if $\k0=4$ we have
\begin{equation}\label{k0=5 8 2}
 \cc_1^4[\M]=512\,,\quad  \cc_1^2\cc_2[\M]=224\,,\quad  \cc_2^2[\M]=98-2\,b_2(\M)+b_4(\M).
\end{equation}
\end{itemize}
If $(\M,\omega)$ is a connected symplectic manifold and the $S^1$-action is non-Hamiltonian then for all $\k0\geq 4$ 
\begin{equation}\label{non-ham 8}
 \cc_1^4[\M]= \cc_1^2\cc_2[\M]=0\quad\mbox{and}\quad   \cc_2^2[\M]=-2N_1+N_2
\end{equation}
\end{prop}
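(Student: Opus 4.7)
The plan is to reduce everything to the formulas already proved for the Hilbert polynomial when $\k0 \in \{n, n+1\}$, then extract $\cc_2^2[\M]$ from the Todd genus equation.

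First, for $\k0 = n+1 = 5$, I apply Proposition \ref{cor n+1} directly: it gives $\Hil(z) = \frac{N_0}{24}\prod_{j=1}^4(z+j)$, together with $\cc_1^4[\M] = N_0 \cdot 5^4 = 625\, N_0$ and $\cc_1^2\cc_2[\M] = N_0\cdot\frac{4\cdot 5^3}{2} = 250\, N_0$ from \eqref{c1 n+1} and \eqref{c1c2}. Similarly, for $\k0 = n = 4$, Proposition \ref{cor n} delivers $\Hil(z) = \frac{N_0}{12}(z+2)\prod_{j=1}^3(z+j)$, $\cc_1^4[\M] = 2N_0\cdot 4^4 = 512\, N_0$ and $\cc_1^2\cc_2[\M] = N_0\cdot 4^2(16-4+2) = 224\, N_0$ by \eqref{c1 n} and \eqref{c1c22}.

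To pin down $\cc_2^2[\M]$, I would use the degree-$4$ Todd polynomial from \eqref{Todd},
\[
T_4 \;=\; \frac{-\cc_1^4 + 4\cc_1^2\cc_2 + 3\cc_2^2 + \cc_1\cc_3 - \cc_4}{720},
\]
together with Corollary \ref{todd genus comp}, which yields $T_4[\M] = N_0$. Substituting the values of $\cc_1^4[\M]$ and $\cc_1^2\cc_2[\M]$ above and rearranging gives
\[
3\,\cc_2^2[\M] \;=\; 720\,N_0 + \cc_1^4[\M] - 4\,\cc_1^2\cc_2[\M] - \cc_1\cc_3[\M] + \cc_4[\M].
\]
Now \eqref{eq c4 8} and \eqref{c1c3 8} give $\cc_4[\M] - \cc_1\cc_3[\M] = -42N_0 - 6N_1 + 3N_2$. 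Plugging in the two cases yields $3\,\cc_2^2[\M] = 303\,N_0 - 6N_1 + 3N_2$ for $\k0 = 5$ and $3\,\cc_2^2[\M] = 294\,N_0 - 6N_1 + 3N_2$ for $\k0 = 4$, giving the claimed formulas \eqref{k0=5 8} and \eqref{k0=4 8}.

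For the symplectic consequences, I would invoke Lemma \ref{N0 1}: in the Hamiltonian case $N_0 = 1$ and \eqref{bi=Ni} identifies $N_j = b_{2j}(\M)$, whereupon \eqref{k0=5 8 1} and \eqref{k0=5 8 2} are just rewrites of (a) and (b). In the non-Hamiltonian case $N_0 = 0$; since $\k0 \geq n = 4$, Corollary \ref{bound on k0 s} ({\bf ii'}) together with Remark \ref{rmk 1} (b') forces $\Hil(z) \equiv 0$, so $\cc_1^h T_{n-h}[\M] = 0$ for all $h$. In particular $\cc_1^4[\M] = \cc_1^2\cc_2[\M] = 0$, while the same Todd-polynomial identity with $N_0 = 0$ reduces to $3\,\cc_2^2[\M] = \cc_4[\M] - \cc_1\cc_3[\M] = -6N_1 + 3N_2$, proving \eqref{non-ham 8}. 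The only mild subtlety is book-keeping: checking that \eqref{c1c3 8} (quoted from Theorem \ref{nostro}) and \eqref{eq c4 8} combine correctly with the Todd relation — no conceptual obstacle arises, since all four Chern numbers $\cc_4, \cc_1\cc_3, \cc_1^2\cc_2, \cc_1^4$ are already determined, and $\cc_2^2$ is the unique remaining unknown in the Todd formula.
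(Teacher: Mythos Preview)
Your proof is correct and follows essentially the same approach as the paper: invoke Propositions \ref{cor n+1} and \ref{cor n} for $\Hil(z)$, $\cc_1^4[\M]$, and $\cc_1^2\cc_2[\M]$, then solve the Todd-genus identity \eqref{todd 4} for $\cc_2^2[\M]$ using \eqref{eq c4 8} and \eqref{c1c3 8}; the symplectic statements follow from Lemma \ref{N0 1}, \eqref{bi=Ni}, and Corollary \ref{bound on k0 s}. If anything, you are slightly more explicit than the paper, which does not name \eqref{eq c4 8} and \eqref{c1c3 8} in its proof even though they are needed.
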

\begin{proof}
The only claims in \eqref{k0=5 8} and \eqref{k0=4 8} which do not follow directly from Propositions \ref{cor n+1} and \ref{cor n} are the expressions of $ \cc_2^2[\M]$ in terms of the $N_j$'s. In order to obtain them, it is sufficient to use the expression of the Todd genus given in Corollary \ref{todd genus comp},
which for $n=4$ gives
\begin{equation}\label{todd 4}
 \frac{-\cc_1^4+4\cc_1^2\cc_2+3\cc_2^2+\cc_1\cc_3-\cc_4}{720}[\M]=N_0\,.
\end{equation}
By combining \eqref{todd 4} with \eqref{c1 n+1}, \eqref{c1c2}, \eqref{c1 n} and \eqref{c1c22} we obtain the desired claims.
In the symplectic case, all the claims follow from Lemma \ref{N0 1}, \eqref{bi=Ni}, Corollary \ref{bound on k0} and \eqref{todd 4}.

\end{proof}
When $\k0=3$ or $\k0=2$, from Proposition \ref{k0=n-1} and \ref{k0=n-2} we can see that the coefficients of the Hilbert polynomial depend on the value of $ \cc_1^4[\M]$. The following proposition exhibits the
relation between $ \cc_1^2\cc_2[\M]$, $ \cc_2^2[\M]$ and $ \cc_1^4[\M]$.
\begin{prop}[$\mathbf{\dim(\M)=8},\;\k0=2,3$]\label{dim 8 2}
Let $(\M,\J,S^1)$ be an $S^1$-space of dimension $8$, and let
$\k0$, $\Hil(z)$ and the $N_j$'s be defined as before. Then 
\begin{itemize}
\item[(a)] $\k0=3$ implies that 
\begin{equation}
\label{k0=3 c1c2}  \cc_1^2\cc_2[\M]=108\,N_0+\frac{2}{9} \cc_1^4[\M]\,,
\end{equation}
and
\begin{equation}
\label{k0=3 c22}  \cc_2^2[\M]=82\,N_0-2\,N_1+N_2+\frac{1}{27} \cc_1^4[\M]\,.
\end{equation} 

\item[(b)] $\k0=2$ implies that 
\begin{equation}
\label{k0=2 c1c2}  \cc_1^2\cc_2[\M]=96\,N_0+\frac{1}{4} \cc_1^4[\M]\,,
\end{equation}
and
\begin{equation}
\label{k0=2 c22}  \cc_2^2[\M]=98\,N_0-2\,N_1+N_2\,.
\end{equation} 

\end{itemize}

\end{prop}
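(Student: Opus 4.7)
The plan is to apply the general rigidity formulas already derived for the index $\k0=n-1$ and $\k0=n-2$ (Corollaries \ref{relation c122} and \ref{relation c122 2}) specialized to $n=4$, and then extract $\cc_2^2[\M]$ from the Todd genus identity. No new equivariant computation is needed; everything reduces to specializing formulas that are already available in Sections \ref{equations chern} and \ref{sec: values k0}, together with \eqref{eq c4 8} and \eqref{c1c3 8}.

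First I would prove \eqref{k0=3 c1c2}. Since $\k0=3=n-1$ with $n=4$, Corollary \ref{relation c122} gives
\begin{equation*}
\cc_1^2\cc_2[\M]-\frac{n(n-3)}{2(n-1)^2}\,\cc_1^4[\M]=12\,N_0\,(n-1)^{n-2},
\end{equation*}
and plugging in $n=4$ yields $\cc_1^2\cc_2[\M]-\tfrac{2}{9}\cc_1^4[\M]=108\,N_0$. Similarly, for \eqref{k0=2 c1c2}, since $\k0=2=n-2$ with $n=4$, Corollary \ref{relation c122 2} gives
\begin{equation*}
\cc_1^2\cc_2[\M]-\frac{n-3}{2(n-2)}\,\cc_1^n[\M]=24\,N_0\,(n-2)^{n-2},
\end{equation*}
which for $n=4$ yields $\cc_1^2\cc_2[\M]-\tfrac{1}{4}\cc_1^4[\M]=96\,N_0$.

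Next I would tackle the $\cc_2^2[\M]$ formulas \eqref{k0=3 c22} and \eqref{k0=2 c22}. By Corollary \ref{todd genus comp} and the expansion \eqref{Todd} of the total Todd class specialized to $n=4$, the Todd genus identity reads
\begin{equation*}
\frac{-\cc_1^4+4\,\cc_1^2\cc_2+3\,\cc_2^2+\cc_1\cc_3-\cc_4}{720}[\M]=N_0.
\end{equation*}
Solving for $\cc_2^2[\M]$ and substituting the values of $\cc_1\cc_3[\M]$ and $\cc_4[\M]$ from \eqref{eq c4 8} and \eqref{c1c3 8} produces
\begin{equation*}
\cc_2^2[\M]=226\,N_0-2\,N_1+N_2+\tfrac{1}{3}\bigl(\cc_1^4[\M]-4\,\cc_1^2\cc_2[\M]\bigr).
\end{equation*}
For $\k0=3$, inserting \eqref{k0=3 c1c2} gives $\cc_1^4-4\cc_1^2\cc_2=\tfrac{1}{9}\cc_1^4-432\,N_0$, and hence \eqref{k0=3 c22}. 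For $\k0=2$, inserting \eqref{k0=2 c1c2} gives $\cc_1^4-4\cc_1^2\cc_2=-384\,N_0$, so the $\cc_1^4$ contribution cancels completely and \eqref{k0=2 c22} drops out.

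There is no real obstacle here; the only thing to be careful about is the arithmetic bookkeeping, particularly the cancellation of $\cc_1^4$ in the $\k0=2$ case, which is what makes \eqref{k0=2 c22} independent of $\cc_1^4[\M]$ (in contrast with \eqref{k0=3 c22}). It is worth noting that this proposition is the direct analogue of Proposition \ref{dim 8}, just with the rigidity in $\cc_1^4$ replaced by the one-parameter family of Chern numbers allowed when $\k0<n-1$.
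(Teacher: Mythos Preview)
Your proposal is correct and is exactly the approach the paper takes: it invokes Corollary~\ref{relation c122} (resp.\ Corollary~\ref{relation c122 2}) for \eqref{k0=3 c1c2} (resp.\ \eqref{k0=2 c1c2}), and then combines the Todd genus identity \eqref{todd 4} with \eqref{eq c4 8} and \eqref{c1c3 8} to extract $\cc_2^2[\M]$. The paper's proof just suppresses the intermediate arithmetic that you spelled out.
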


\begin{proof}

(a) In order to prove \eqref{k0=3 c1c2}, it is sufficient to use Corollary \ref{relation c122}, and
equation \eqref{k0=3 c22} can be obtained by combining \eqref{todd 4} with \eqref{eq c4 8}, \eqref{c1c3 8} and \eqref{k0=3 c1c2}.

(b) Equation \eqref{k0=2 c1c2} follows from Corollary \ref{relation c122 2}, and
\eqref{k0=2 c22} can be obtained by combining \eqref{todd 4} with \eqref{eq c4 8}, \eqref{c1c3 8} and \eqref{k0=2 c1c2}.
\end{proof}
We conclude this section with the following corollary:
\begin{corollary}\label{c228h}
Let $(\M,\omega)$ be a compact, connected symplectic manifold of dimension $8$ that can be endowed with a Hamiltonian circle action with
isolated fixed points. If the minimal Chern number is \emph{even}, then 
$$
\cc_2^2[\M]+2\,b_2(\M)=98+b_4(\M)\,.
$$
\end{corollary}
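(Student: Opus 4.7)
The plan is to observe that the desired identity follows by combining two things: first, Corollary \ref{minimal chern ham} together with Remark \ref{mcn} forces the index $\k0$ to lie in $\{1,2,3,4,5\}$ and to equal the minimal Chern number in the Hamiltonian setting, so the hypothesis ``minimal Chern number is even'' reduces to the dichotomy $\k0\in\{2,4\}$; second, the two propositions from the last subsection compute $\cc_2^2[\M]$ in each of those cases, and in both cases the answer, after specializing $N_0,N_1,N_2$ to Betti numbers, is the same linear expression in $b_2(\M)$ and $b_4(\M)$.

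Concretely, I would proceed as follows. Because the $S^1$-action is Hamiltonian with isolated fixed points, Lemma \ref{N0 1} gives $N_0=1$, and the equivariant perfection formula \eqref{bi=Ni} gives $N_j=b_{2j}(\M)$ for all $j=0,\dots,4$. Now split according to $\k0$. If $\k0=4$, Proposition \ref{dim 8}\,(b') applies directly: equation \eqref{k0=5 8 2} reads $\cc_2^2[\M]=98-2b_2(\M)+b_4(\M)$, which rearranges to the claim. If $\k0=2$, Proposition \ref{dim 8 2}\,(b), equation \eqref{k0=2 c22}, gives $\cc_2^2[\M]=98\,N_0-2N_1+N_2$; substituting $N_0=1$, $N_1=b_2(\M)$, $N_2=b_4(\M)$ yields exactly the same relation $\cc_2^2[\M]=98-2b_2(\M)+b_4(\M)$, and again the claim follows.

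There is essentially no obstacle: the nontrivial work has already been done in Propositions \ref{dim 8} and \ref{dim 8 2}, which were obtained by combining the explicit Hilbert polynomials for $\k0=n$ and $\k0=n+1$ with Corollaries \ref{relation c122} and \ref{relation c122 2} for $\k0=n-1$ and $\k0=n-2$, respectively, and then eliminating $\cc_1^4[\M]$, $\cc_1^2\cc_2[\M]$ via the Todd genus identity \eqref{todd 4}. The only observation worth highlighting in the write-up is the ``coincidence'' that both even values of the index produce the identical coefficient pattern $(98,-2,1)$ in front of $(N_0,N_1,N_2)$: for $\k0=4$ it comes from the explicit Hilbert polynomial of the hyperquadric, while for $\k0=2$ it arises from \eqref{todd 4} after subtracting $\tfrac14\cc_1^4[\M]$ from $\cc_1^2\cc_2[\M]$ as prescribed by Corollary \ref{relation c122 2}.
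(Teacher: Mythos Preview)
Your proposal is correct and follows essentially the same argument as the paper: reduce to $\k0\in\{2,4\}$ via Corollary~\ref{minimal chern ham}, then invoke \eqref{k0=5 8 2} and \eqref{k0=2 c22} together with \eqref{bi=Ni}. The extra citations of Remark~\ref{mcn} and Lemma~\ref{N0 1} are reasonable elaborations but not strictly needed, since Corollary~\ref{minimal chern ham} already identifies the minimal Chern number with the index, and the identification $N_j=b_{2j}(\M)$ via \eqref{bi=Ni} already encodes $N_0=1$.
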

\begin{proof}
If $(\M,\omega)$ can be endowed with a Hamiltonian circle action with isolated fixed points, then by Corollary \ref{minimal chern ham} the minimal Chern number coincides with the index, and it can be only $1,2,3,4$ or $5$. Since it is even, the claim follows from  
\eqref{k0=5 8 2}, \eqref{k0=2 c22} and \eqref{bi=Ni}.
\end{proof}

\begin{rmk}\label{pos 8}
It is easy to check that all the necessary congruences among the Chern numbers for $n=4$ are satisfied; in particular 
$(-\cc_1^4+4\cc_1^2\cc_2+\cc_1\cc_3+3\cc_2^2-\cc_4)[\M]$ must be a non-negative multiple of $N_0$. 
If $\k0\geq 4$ then $(2\cc_1^4+\cc_1^2\cc_2)[\M]$ must be a non-negative multiple of $12$. However, in general,
we cannot conclude such non-negativity results.
\end{rmk}

\begin{rmk}\label{comparison}
It can be checked that \eqref{k0=2 c1c2} is equivalent to equation (7.22) in \cite{GoSa}; here $\M$ is an $8$-dimensional
compact symplectic manifold,
with a Hamiltonian $S^1$-action and exactly $5$ fixed points.
Equation (7.22) in \cite{GoSa} is obtained by applying some results of Hattori (see \cite{Ha}, and Corollary 7.7, Theorem 7.11 in \cite{GoSa}) which, however, only hold 
whenever $(\M,\J)$ possesses a fine line bundle. Moreover, the derivation of (7.22) from such results is rather complicated, as it can be seen
from the proof of \cite[Theorem 7.11]{GoSa}.
Here we do not need to assume the existence of
a fine line bundle, and \eqref{k0=2 c1c2} is an immediate consequence of Corollary \ref{relation c122 2}.
\end{rmk}

When $\k0=1$ we do not obtain any restrictions on the Chern numbers (see Corollary \ref{cor equations chern numbers}, as well as
the discussion on the case $\k0=n-3$ at the end of Section \ref{sec: values k0}).

\end{document}